\theoremstyle{plain}
\newtheorem{thm}{Theorem}[section]
\newtheorem{cor}[thm]{Corollary}
\newtheorem{lem}[thm]{Lemma}
\newtheorem{prop}[thm]{Proposition}
\theoremstyle{definition}
\newtheorem{claim}{Claim}
\newtheorem{case}{Case}
\numberwithin{equation}{section}
\newcommand{\mb}{\mathbb}
\newcommand{\mc}{\mathcal}
\begin{document}
	
	\title[ ]{Directional bounded complexity, equicontinuity and discrete spectrum for $\mathbb{Z}^q$-actions}

	\author[C. Liu and L. Xu]{Chunlin Liu and Leiye Xu}
	\address{C. Liu: CAS Wu Wen-Tsun Key Laboratory of Mathematics, School of Mathematical Sciences, University of Science and Technology of China, Hefei, Anhui, 230026, PR China}
	
	\email{lcl666@mail.ustc.edu.cn}
	
	\address{L. Xu: CAS Wu Wen-Tsun Key Laboratory of Mathematics, School of Mathematical Sciences, University of Science and Technology of China, Hefei, Anhui, 230026, PR China}

	\email{leoasa@mail.ustc.edu.cn}

	\subjclass[2020]{Primary  37A35; Secondary 37A05}
	
	\keywords{Directional bounded complexity, Directional equicontinuity, Directional discrete spectrum.}
	\begin{abstract}
Given $q\in\mathbb{N}$, let $(X,T)$ be a $\mathbb{Z}^q$-system, $\vec{v}\in\mathbb{R}^q\setminus\{\vec{0}\}$ be a direction vector and $\textbf{b}\in\mathbb{R}_+^{q-1}$. We study $(X,T)$ that has bounded complexity with respect to three kinds of metrics defined along direction $\vec{v}$: the directional Bowen metric $d_k^{\vec{v},\textbf{b}}$, the directional max-mean metric $\hat{d}_k^{\vec{v},\textbf{b}}$ and the directional mean metric $\bar{d}_k^{\vec{v},\textbf{b}}$. It is shown that  $(X,T)$ has bounded topological complexity with respect to $\{d_k^{\vec{v},\textbf{b}}\}_{k=1}^{\infty}$ (resp. $\{\hat{d}_k^{\vec{v},\textbf{b}}\}_{k=1}^{\infty}$) if and only if $T$ is $(\vec{v},\textbf{b})$-equicontinuous (resp. $(\vec{v},\textbf{b})$-equicontinuous in the  mean). Meanwhile, it turns out that an invariant Borel probability measure $\mu$ on $X$ has bounded complexity with respect to $\{d_k^{\vec{v},\textbf{b}}\}_{k=1}^{\infty}$ if and only if $T$ is $(\mu,\vec{v},\textbf{b})$-equicontinuous. Moreover, it is shown that $\mu$ has bounded complexity with respect to $\{\bar{d}_k^{\vec{v},\textbf{b}}\}_{k=1}^{\infty}$ if and only if $\mu$ has bounded complexity with respect to $\{\hat{d}_k^{\vec{v},\textbf{b}}\}_{k=1}^{\infty}$  if and only if $T$ is $(\mu,\vec{v},\textbf{b})$-mean equicontinuous if and only if $T$ is $(\mu,\vec{v},\textbf{b})$-equicontinuous in the mean if and only if $\mu$ has $\vec{v}$-discrete spectrum. 
   \end{abstract}
	\maketitle	
	\section{Introduction}
Given $q\in \mathbb{N}$, throughout this paper, we call a pair $(X,T)$  a $\mathbb{Z}^q$-topological dynamical system ($\mathbb{Z}^q$-t.d.s. for short) if $X$ is a compact metric space and the $\mathbb{Z}^q$-action $T:X\to X$ is a homeomorphism from the additive group $\mathbb{Z}^q$ to the group of homeomorphisms of $X$. For a $\mathbb{Z}^q$-t.d.s. $(X,T)$, we denote the corresponding homeomorphism by $T^{\vec{v}}$ for any $\vec{v}\in \mathbb{Z}^q$ so that $T^{\vec{v}}\circ T^{\vec{w}}=T^{\vec{v}+\vec{w}}$ for any $\vec{v},\vec{w}\in \mathbb{Z}^q $ and $T^{\vec{0}} $ is the identity on $X$.  Let $\mathcal{B}_X$ be the Borel $\sigma$-algebra of $X$ and $\mu$ be a Borel probability measure on $(X,\mathcal{B}_X)$. We say that $\mu$ is  invariant for $(X,T)$ if for any $\vec{v}\in \mathbb{Z}^q$ and $A\in\mathcal{B}_X$, $\mu (T^{-\vec{v}}A)=\mu(A)$. The set of invariant Borel probability measures defined on $\mathcal{B}_X$ is denoted by $M(X,T)$. It is well known that for each $\mathbb{Z}^q$-t.d.s. $(X,T)$, $M(X,T)\neq \emptyset$ and each $\mu\in M(X,T)$ induces a $\mathbb{Z}^q$-meausre preserving dynamical system ($\mathbb{Z}^q$-m.p.s. for short) $(X,\mathcal{B}_X,\mu,T)$.  

Since entropy is introduced  to dynamical systems by Kolmogorov \cite{Ko}, which was used to measure the chaoticity or unpredictability of a given system. It is well known that zero entropy
systems is a dense $G_{\delta}$ subset of all homeomorphisms. Thus it is important to study the complexity in zero entropy systems.  For $\mathbb{Z}$-systems, related work can be traced back to Ferenczi \cite{F}, he studied the measure-theoretic complexity of ergodic systems, using the $\alpha$-names of a partition and the Hamming distance. He proved that, when the measure is ergodic, the complexity function is bounded if and only if the system has discrete spectrum. Yu \cite{Y} showed that the result holds for the non-ergodic case. For the topological version,  Blanchard, Host and Maass \cite{B} studied topological complexity via the complexity function of an open cover and showed that the complexity function is bounded for any open cover if and only if the system is equicontiuous.
Recently, in the investigation of Sarnak's conjecture, Huang, Wang and Ye \cite{HWY} introduced the measure complexity of an invariant measure for $\mathbb{Z}$-t.d.s. $(X,T)$, similar to the one introduced by Katok \cite{K}, by using the mean metric instead of the Bowen metric. Moreover, Huang et al. \cite{H1} studied topological and measure-theoretic complexity via a sequence of metrics and  obtained the relation between bounded complexity, mean equicontinuity and discrete spectrum. Combining results in \cite{H1} and \cite{HWY}, we know that a $T$-invariant measure $\mu$ has bounded complexity with respect to mean metric if and only if $T$ is $\mu$-mean equicontinuous and if and only if $\mu$ has discrete spectrum. After that, the above results are extended to amenable group actions \cite{YZZ}.

For more general group actions, such as $\mathbb{Z}^q$-actions, the dynamics become more
and more complicated. To better understand the complexities of zero entropy $\mathbb{Z}^q$-systems, scholars began to consider the directional dynamical systems. In order to study cellular automaton map together with the Bernoulli shift, Milnor \cite{Mil} introduced directional entropy, which was further studied \cite{Bro,CK,P1,P}. 
A natural question is what are the dynamics of their non-cocompact subgroup actions. In this respect, Johnson and \c Sahin \cite{JS} studied directional recurrence. 
More recently, authors in this paper \cite{LX} introduced directional sequence entropy and directional discrete spectrum for $\mathbb{Z}^q$-m.p.s., and showed that a $\mathbb{Z}^q$-m.p.s. has directional discrete spectrum along $q$ linearly independent directions if and only if it has discrete spectrum. In this paper, we further investigate the complexity along any direction and obtain the relation with directional discrete spectrum.

Motivated by the above-mentioned discussions, in this paper,  we introduce topological and measure-theoretic complexity via a seqeuence of metrics induced by a metric $d$ along some direction, and introduce the notion of directional equicontinuity. Meanwhilie, we establish the relation between directional discrete spectrum, directional complexity and directional equicontinuity.
 One of our main results is to show  that for a $\mathbb{Z}^q$-t.d.s. $(X,T)$ with $\mu\in M(X,T)$, $\mu$ has directional discrete spectrum if and only if $\mu$ has bounded complexity with respect to directional metrics. The proof of this result for $\mathbb{Z}$-actions strongly depends on the ergodicity. Howver, the ergodicity along some direction of $\mu$ is not well defined which is our major difficulty.  
 \begin{figure}[!h]
 	\centering
 	\includegraphics[scale=0.6]{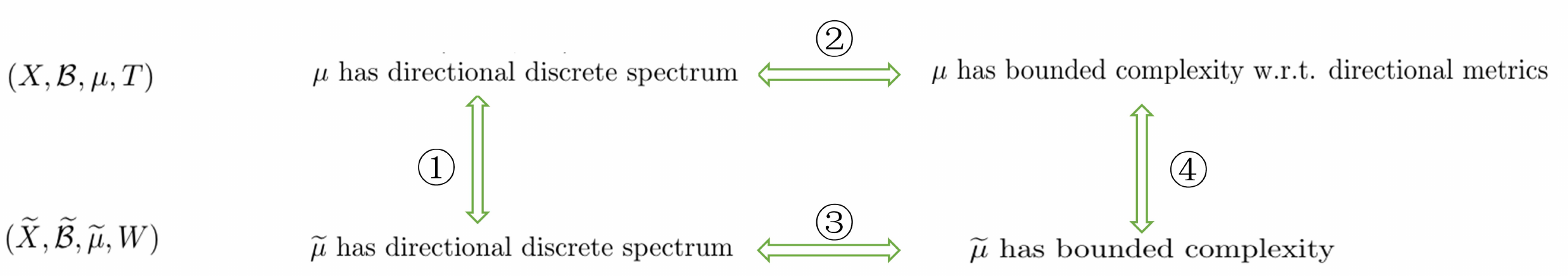}
 	\caption{~}
 	\label{fig:1}
 \end{figure} 
With help of a $\mb{Z}$-m.p.s. $(\widetilde{X},\widetilde{\mc{B}},\widetilde{\mu},W)$, which was introduced by Park \cite{P}, we overcome this difficulty by the following idea (see figure 1): 
The equivalence \textcircled{1} has been shown in \cite{LX}  and the equivalence \textcircled{3} has been obtained in \cite{H1,HWY}. Thus, to prove the equivalence \textcircled{2}, we only need to prove the equivalence \textcircled{4}, which avoids the discussion of ergodicity.

To be precise, let $(X,T)$ be a $\mathbb{Z}^2$-t.d.s.,   $\vec{v}=(1,\beta)\in\mathbb{R}^2$ be a direction vector and $b\in (0,\infty)$. We remark that notations above are adopted for simplicity but all results in this paper hold for $\mathbb{Z}^q$-system with any direction vector $\vec{v}\in\mathbb{R}^q\setminus\{\vec{0}\}$ for any $q\in\mathbb{N}$.  Put $$\Lambda^{\vec{v}}(b)=\left\{(m,n)\in\mathbb{Z}^2:\beta m-b\leq n\leq \beta m+b\right\}$$
and  for each $k\in\mathbb{N}$, let $\Lambda_k^{\vec{v}}(b)=\Lambda^{\vec{v}}(b)\cap ([0,k-1]\times\mathbb{Z})$, where $[0,k-1]=\{0,1,\ldots,k-1\}$. We define three kinds of metrics on $X$ along $\vec{v}$ as follows.  
For $k\in \mathbb{N}$ and $x,y\in X$, let
 $$d_k^{\vec{v},b}(x,y)=\max_{(m,n)\in \Lambda_k^{\vec{v}}(b) }\left\{d(T^{(m,n)}x,T^{(m,n)}y)\right\},$$

$$\hat{d}_k^{\vec{v},b}(x,y)=\max_{1\leq i\leq k}\left\{\frac{1}{\#(\Lambda_i^{\vec{v}}(b))}\sum_{(m,n)\in\Lambda_i^{\vec{v}}(b)}d(T^{(m,n)}x,T^{(m,n)}y)\right\}$$
and
$$\bar{d}_k^{\vec{v},b}(x,y)=\frac{1}{\#(\Lambda_k^{\vec{v}}(b))}\sum_{(m,n)\in\Lambda_k^{\vec{v}}(b)}d(T^{(m,n)}x,T^{(m,n)}y),$$
where  $\#(A)$ is the number of elements of a finite set $A$.
It is clear that, for all $k\in \mathbb{N}$ and $x,y\in X$,
$$d_k^{\vec{v},b}(x,y)\geq \hat{d}_k^{\vec{v},b}(x,y)\geq \bar{d}_k^{\vec{v},b}(x,y).$$

 For $x\in X$, $\epsilon>0$ and a metric $\rho$ on $X$, let $B_{\rho}(x,\epsilon)=\{y\in X: \rho(x,y)<\epsilon\}.$ Let $\{\rho_k\}_{k=1}^{\infty}$ be a sequence of metrics (denoted as $\{\rho_k\}$ for convenience). We say that $(X,T)$ has bounded topological complexity with respect to $\{\rho_k\}$ if for any $\epsilon>0$, there exists a positive integer $C=C(\epsilon)$ such that for each $k\in \mathbb{N}$, there are points $x_1,x_2,\ldots,x_m\in X$ with $m\leq C$ satisfying $X=\bigcup_{i=1}^mB_{\rho_k}(x_i,\epsilon)$. In this paper, we study the cases where $\{\rho_k\}$ is equal to $\{d_k^{\vec{v},b}\}$, $\{\hat{d}_k^{\vec{v},b}\}$ or $\{\bar{d}_k^{\vec{v},b}\}$.

  We also investigate the measure-theoretic complexity of invariant Borel probability measures. That is, for a given $\epsilon>0$ and $\mu\in M(X,T)$, we consider the measure complexity with respect to $\{\rho_k\}=\{d_k^{\vec{v},b}\}$, $\{\hat{d}_k^{\vec{v},b}\}$ or $\{\bar{d}_k^{\vec{v},b}\}$, defined by
  $$S(\rho_k)=\min\left\{m\in \mathbb{Z}_+:\exists x_1, \ldots,x_m\in X, \mu\left(\bigcup_{i=1}^mB_{\rho_k}(x_i,\epsilon)\right)>1-\epsilon\right\}.$$
We say $\mu\in M(X,T)$ has bounded complexity with respect to $\{\rho_k\}$ if for any $\epsilon>0$, there exists a positive integer $C=C(\epsilon)$ such that $S(\rho_k)\leq C$ for all $k\in\mathbb{N}$.
  Following the idea \cite{H1}, we introduce various notions of direcitonal equicontinuity, analogously to $\mathbb{Z}$-actions in Section 3 and Section 4. As expected, the bounded complexity of a $\mathbb{Z}^2$-system along a direction is related to various notions of directional equicontinuity.

Let us now formulate our main results in topological sense (recalled and proved below as Theorem \ref{thm1} and Theorem \ref{thm2}).
\begin{thm}
Let $(X,T)$ be a $\mathbb{Z}^2$-t.d.s., $\mu\in M(X,T)$ and $\vec{v}=(1,\beta)\in\mathbb{R}^2$ be a direction vector.  Then for any $b\in (0,\infty)$ the following two statements are equivalent.
\begin{itemize}
	\item[(a)]$(X,T)$ has bounded complexity with respect to $\{d_k^{\vec{v},b}\}$ $($resp. $\{\hat{d}_k^{\vec{v},b}\}$$)$.
	\item[(b)]$(X,T)$ is $(\vec{v},b)$-equicontinuous $($resp. $(\vec{v},b)$-equicontinuous in the mean$)$.
\end{itemize}
\end{thm}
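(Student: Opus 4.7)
The plan is to generalize the Huang--Ye topological characterization of bounded complexity (established for $\mathbb{Z}$-actions) to the directional setting. The truncated strip $\Lambda_k^{\vec{v}}(b)$ plays the role of the orbit segment $\{0,\dots,n-1\}$ from the $\mathbb{Z}$-case.

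For the implication (b) $\Rightarrow$ (a), the argument is direct. Given $\epsilon>0$, choose $\delta>0$ from $(\vec{v},b)$-equicontinuity so that $d(x,y)<\delta$ forces $d(T^{(m,n)}x,T^{(m,n)}y)<\epsilon$ for every $(m,n)\in\Lambda^{\vec{v}}(b)$. Any finite cover of the compact space $X$ by $d$-balls of radius $\delta$ then transforms, ball-by-ball, into a $d_k^{\vec{v},b}$-cover of the same cardinality by $\epsilon$-balls, uniformly in $k$. The mean variant follows identically since $\hat d_k^{\vec{v},b}(x,y)\leq\sup_{(m,n)\in\Lambda^{\vec{v}}(b)}d(T^{(m,n)}x,T^{(m,n)}y)$.

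For the reverse (a) $\Rightarrow$ (b), I would work with the enveloping pseudo-metrics
$$D^{\vec{v},b}(x,y)=\sup_{k\in\mathbb{N}}d_k^{\vec{v},b}(x,y),\qquad \widehat{D}^{\vec{v},b}(x,y)=\sup_{k\in\mathbb{N}}\hat d_k^{\vec{v},b}(x,y),$$
both of which are monotone limits of the metrics appearing in the complexity hypothesis. A diagonal extraction on the centers of the $d_k^{\vec{v},b}$-covers, combined with monotonicity of $d_k^{\vec{v},b}$ in $k$ and the pigeonhole principle, yields a finite $\epsilon$-net of $X$ in $D^{\vec{v},b}$ for every $\epsilon>0$, so $(X,D^{\vec{v},b})$ is totally bounded. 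The next step is to show that $D^{\vec{v},b}$ is actually $d$-continuous, i.e.\ $\mathrm{id}\colon(X,d)\to(X,D^{\vec{v},b})$ is uniformly continuous; this is precisely $(\vec{v},b)$-equicontinuity along the forward half of $\Lambda^{\vec{v}}(b)$. Invertibility of $T$ and compactness of $X$ then extend this to $(\vec{v},b)$-equicontinuity over the whole strip. The max-mean case runs in parallel with $\widehat{D}^{\vec{v},b}$, using that $\hat d_k^{\vec{v},b}$ is also monotone in $k$ because it is defined as a max over $1\leq i\leq k$.

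The main obstacle is the upgrade from total boundedness of $(X,D^{\vec{v},b})$ to uniform continuity of $\mathrm{id}\colon(X,d)\to(X,D^{\vec{v},b})$. Since $D^{\vec{v},b}$ is only lower semicontinuous in $d$ (a supremum of continuous pseudo-metrics), upper control at the diagonal is not automatic from general topology; it must be extracted from the finite-net structure supplied by bounded complexity. The idea is, given $\epsilon>0$, to take an $\epsilon/3$-net $\{x_1^\ast,\ldots,x_C^\ast\}$ in $D^{\vec{v},b}$ and combine $d$-continuity of each function $y\mapsto d(T^{(m,n)}y,T^{(m,n)}x_i^\ast)$ with a careful pigeonhole on the mutual $D^{\vec{v},b}$-distances among the $x_i^\ast$ to produce the required $\delta(\epsilon)>0$. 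The max-mean case uses the same scheme with Cesàro averages in place of pointwise orbit deviations, which requires extra bookkeeping but no conceptually new ingredient.
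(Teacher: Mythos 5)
Your easy direction and the overall scaffolding you set up for (a) $\Rightarrow$ (b) --- passing to the sup pseudo-metric $D^{\vec{v},b}=\sup_k d_k^{\vec{v},b}$, extracting limit centers from the covers via monotonicity of $d_k^{\vec{v},b}$ in $k$, and handling the max-mean case identically because $\hat{d}_k^{\vec{v},b}$ is also increasing --- all match the paper's proofs of Theorems \ref{thm1} and \ref{thm2}. But you stop exactly at the step you yourself flag as the main obstacle, and the plan you offer for it does not work as stated. Continuity of each individual function $y\mapsto d(T^{(m,n)}y,T^{(m,n)}x_i^\ast)$ gives no uniformity over the infinitely many $(m,n)\in\Lambda^{\vec{v}}(b)$, which is the whole difficulty (as you note, $D^{\vec{v},b}$ is only lower semicontinuous in $d$); and a pigeonhole on the mutual $D^{\vec{v},b}$-distances among the net points says nothing about a pair $x,y$ with $d(x,y)$ tiny but sitting where two $D^{\vec{v},b}$-balls of the net come together --- nothing in your outline prevents $D^{\vec{v},b}(x,y)$ from being large for such pairs, so no $\delta(\epsilon)$ is produced and (a) $\Rightarrow$ (b) remains unproved.

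The missing mechanism, which is the paper's actual argument, is to use that the sets $K_t=\{x:\,d_k^{\vec{v},b}(x,z_t)\le\epsilon/6\ \text{for all }k\}$ around the limit centers $z_t$ are $d$-closed (each $d_k^{\vec{v},b}$ is continuous) and cover $X$, and then to argue by contradiction with sequences: if $(\vec{v},b)$-equicontinuity failed, there would be $x_l,y_l$ with $d(x_l,y_l)<1/l$ and witnesses $(m_l,n_l)\in\Lambda^{\vec{v}}(b)$ with deviation at least $\epsilon$; after passing to a subsequence $x_l,y_l\to x_0$ and replacing the pair $(x_l,y_l)$ by $(x_l,x_0)$ via the triangle inequality, a pigeonhole puts infinitely many $x_l$ into a single closed $K_t$, hence $x_0\in K_t$ as well, giving $d_k^{\vec{v},b}(x_l,x_0)\le\epsilon/3$ for all $k$ and the desired contradiction. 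This sequential step is precisely what circumvents the ``boundary'' problem your net-pigeonhole cannot see: a failure of equicontinuity is witnessed by pairs converging to a common point, and that common point lies in the same closed piece as the tail of the sequence. A secondary remark: your closing claim that invertibility of $T$ and compactness upgrade equicontinuity along the forward half of the strip to the full two-sided strip is also only asserted; the standard sup-metric/isometry trick does not transfer verbatim, since the sum of two elements of $\Lambda^{\vec{v}}(b)$ need not lie in $\Lambda^{\vec{v}}(b)$. The paper's metrics are likewise one-sided and its proof tacitly takes the witnesses in the forward half, so this is a minor point, but in your write-up it stands as an additional unproved step.
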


 We also formulate our main results in measure-theoretic sense (recalled and proved below as Theorem \ref{thm3}, Theorem \ref{thm4},  Theorem \ref{thm7}, Theorem \ref{thm8} and Corollary \ref{cor1}). 
 \begin{thm}
 	Let $(X,T)$ be a $\mathbb{Z}^2$-t.d.s., $\mu\in M(X,T)$ and $\vec{v}=(1,\beta)\in\mathbb{R}^2$ be a direction vector. Then for any $b\in (0,\infty)$ the following two statements are equivalent.
 	\begin{itemize}
 		\item[(a)]$\mu$ has bounded complexity with respect to $\{d_k^{\vec{v},b}\}$.
 		\item[(b)]$T$ is $(\mu,\vec{v},b)$-equicontinuous.
 	\end{itemize}
\end{thm}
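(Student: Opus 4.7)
My plan is to prove the two implications separately, adapting the $\mathbb{Z}$-action argument of Huang et al.~in \cite{H1} to the directional setting. Since the Bowen-type metric $d_k^{\vec{v},b}$ is a max rather than an average, the ergodic theorem plays no direct role, and one can proceed without passing through the Park suspension $(\widetilde{X},\widetilde{\mathcal{B}},\widetilde{\mu},W)$ indicated in Figure~1; that detour is reserved for the mean-metric theorems.

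For $(b)\Rightarrow(a)$, fix $\epsilon>0$ and use the definition of $(\mu,\vec{v},b)$-equicontinuity (from Section~4) to extract a measurable set $E\subset X$ with $\mu(E)>1-\epsilon$ and a $\delta=\delta(\epsilon)>0$ such that $x,y\in E$ with $d(x,y)<\delta$ implies $d_k^{\vec{v},b}(x,y)<\epsilon$ for every $k\in\mathbb{N}$. Total boundedness of $(X,d)$ yields a cover of $X$ by $N=N(\delta)$ many open $d$-balls of radius $\delta/2$; discarding those missing $E$ and choosing one representative $x_i\in E$ in each of the remaining cells, the balls $B_{d_k^{\vec{v},b}}(x_i,\epsilon)$ cover $E$ for every $k$, so that $S(d_k^{\vec{v},b},\epsilon)\leq N$ uniformly in $k$. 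This is exactly (a).

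For the substantive direction $(a)\Rightarrow(b)$, fix $m\in\mathbb{N}$ and, using bounded complexity at scale $1/m$, obtain a constant $C_m$ and centers $x_1^{(k,m)},\dots,x_{C_m}^{(k,m)}\in X$ with $\mu\bigl(\bigcup_i B_{d_k^{\vec{v},b}}(x_i^{(k,m)},1/m)\bigr)>1-1/m$ for every $k$. Following the template of \cite{H1}, I regard the stabilising embeddings $\Phi_k:X\to X^{\Lambda_k^{\vec{v}}(b)}$, $\Phi_k(x)=(T^{(p,q)}x)_{(p,q)\in\Lambda_k^{\vec{v}}(b)}$, and extract by a diagonal argument a subsequence $k_j\to\infty$ along which the finitely many covers stabilise to a measurable partition $\mathcal{P}^{(m)}$ of a full-measure subset of $X$, having at most $C_m$ atoms and $d_k^{\vec{v},b}$-diameter $\leq 2/m$ for every $k\geq 1$. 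Lusin's theorem then supplies a compact set $E_m$ with $\mu(E_m)>1-1/m$ on which $\mathcal{P}^{(m)}$ varies continuously, and the uniform-in-$k$ diameter bound translates into $(\mu,\vec{v},b)$-equicontinuity of $T$ on $\bigcup_m E_m$.

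The main obstacle is the diagonal/limit step. In the $\mathbb{Z}$-case the natural order on $\mathbb{N}$ lets one treat $\{d_k\}_{k\in\mathbb{N}}$ as a monotone filtration and the covers as nested; here the analogue is the $(\vec{v},b)$-cone $\Lambda^{\vec{v}}(b)$ filtered by $\Lambda_k^{\vec{v}}(b)$, and I will need the geometric bookkeeping that $\#(\Lambda_{k+1}^{\vec{v}}(b))-\#(\Lambda_k^{\vec{v}}(b))$ is uniformly bounded (by roughly $2b+1$) so that successive covers differ only by a bounded "strip". Once this is in place, the same weak-$*$/compactness argument as in \cite{H1} produces the limiting partition, and the Lusin extraction and diagonal union over $m$ complete the proof.
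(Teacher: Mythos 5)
Your proof of (b) $\Rightarrow$ (a) is correct and essentially the same as the paper's: the paper passes through a compact equicontinuous set and Theorem \ref{thm1}, while you use total boundedness of $(X,d)$ directly; either way the argument is sound.

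The direction (a) $\Rightarrow$ (b) has a genuine gap at the concluding step. For each $m$ your construction produces (at best) a set $E_m$ with $\mu(E_m)>1-1/m$ that admits a finite decomposition into pieces of $d_k^{\vec{v},b}$-diameter at most $2/m$ for all $k$, i.e.\ a set that is ``equicontinuous at the single scale $2/m$'' with some modulus $\delta_m$. Passing to the union $\bigcup_m E_m$ does not give a $(\vec{v},b)$-equicontinuous set: given $\epsilon>0$ you must exhibit one $\delta$ working for all pairs $x,y$ in the set, but two points of the union need not lie in a common $E_m$ with $2/m<\epsilon$, and equicontinuity is not preserved under countable unions. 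What the definition of $(\mu,\vec{v},b)$-equicontinuity requires, and what the paper's proof of Theorem \ref{thm3} supplies, is: for each $\tau>0$ a \emph{single} set of measure $>1-\tau$ carrying the small-diameter covers at \emph{every} scale simultaneously. This forces an intersection $K=\bigcap_M K_M$, and for $\mu(K)>1-\tau$ the measure deficits must be summable (the paper takes them of size $\tau/2^{M+2}$; your deficits $1/m$ are not summable, so even replacing your union by an intersection would not close the gap as stated). Once such a $K$ is built, with $span_K^{\vec{v},b}(k,3/M)\leq C_M$ for all $k$ and $M$, the paper finishes by invoking Theorem \ref{thm1}; you would need either that theorem or the explicit positive-separation argument between the finitely many compact atoms, applied to this one set $K$, not to a union.

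Two smaller points. Your claimed ``partition of a full-measure subset'' is inaccurate: the covers only capture measure $>1-1/m$, and the Hausdorff-limit sets inherit only that bound. And the ``main obstacle'' you identify is a red herring: for the Bowen-type metric the only property needed is monotonicity $d_k^{\vec{v},b}\leq d_{k+1}^{\vec{v},b}$, which is immediate from the inclusion $\Lambda_k^{\vec{v}}(b)\subset\Lambda_{k+1}^{\vec{v}}(b)$; the bound $\#(\Lambda_{k+1}^{\vec{v}}(b))-\#(\Lambda_k^{\vec{v}}(b))\leq 2b+1$, while true, plays no role here (cardinalities matter only for the mean metrics). Likewise the Lusin step is superfluous: regularity of $\mu$ lets one choose compact sets inside the covers, and Hausdorff limits of the center sets and of these compact sets, combined with monotonicity, already yield the uniform spanning bounds.
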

 
 \begin{thm}
 	Let $(X,T)$ be a $\mathbb{Z}^2$-t.d.s., $\mu\in M(X,T)$ and $\vec{v}=(1,\beta)\in\mathbb{R}^2$ be a direction vector. Then the following statements are equivalent.
 	\begin{itemize}
 		\item[(a)]$\mu$ has bounded complexity with respect to $\{\hat{d}_k^{\vec{v},b}\}$ for some $b\in (0,\infty)$ $($or for any $b\in (0,\infty))$.
 		\item[(b)]$\mu$ has bounded complexity with respect to $\{\bar{d}_k^{\vec{v},b}\}$ for some $b\in (0,\infty)$ $($or for any $b\in (0,\infty))$.
 		\item[(c)]$\mu$ has $\vec{v}$-discrete spectrum.
 		\item[(d)]$T$ is $(\mu,\vec{v},b)$-equicontinuous in the mean for some $b\in (0,\infty)$ $($or for any $b\in (0,\infty))$.
 		\item[(e)]$T$ is $(\mu,\vec{v},b)$-mean equicontinuous for some $b\in (0,\infty)$ $($or for any $b\in (0,\infty))$.
 	\end{itemize}
 	
 \end{thm}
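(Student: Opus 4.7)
The plan is to follow the roadmap sketched in Figure 1, namely to reduce every equivalence to the corresponding known equivalence for the $\mathbb{Z}$-m.p.s.\ $(\widetilde{X},\widetilde{\mathcal{B}},\widetilde{\mu},W)$ associated to $(X,T,\mu)$ and $\vec{v}$ via Park's construction. Arrow \textcircled{1}, which says $\mu$ has $\vec{v}$-discrete spectrum iff $\widetilde{\mu}$ has discrete spectrum, is supplied by \cite{LX}, and arrow \textcircled{3}, which packages the equivalences among bounded complexity (both for $\{\hat{d}_k^{W}\}$ and $\{\bar{d}_k^{W}\}$), mean equicontinuity, equicontinuity in the mean, and discrete spectrum, is supplied by \cite{H1,HWY}. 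So my target is arrow \textcircled{4}: a $b$-uniform comparison identifying bounded complexity of $\mu$ with respect to $\{\hat{d}_k^{\vec{v},b}\}$ (resp.\ $\{\bar{d}_k^{\vec{v},b}\}$) with bounded complexity of $\widetilde{\mu}$ with respect to the standard $\{\hat{d}_k^{W}\}$ (resp.\ $\{\bar{d}_k^{W}\}$). Combined with the transfer of mean equicontinuity notions (which parallels the transfer of directional discrete spectrum in \cite{LX} and is naturally set up in Sections 3--4 of the present paper), this will close the loop and yield all five equivalences at once.

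To prove \textcircled{4}, the key observation is that $\#\Lambda_k^{\vec{v}}(b)$ grows linearly in $k$ with slope $c(b)=2b+O(1)$, and that for each fixed $i\in[0,k-1]$ the vertical slab $\Lambda_i^{\vec{v}}(b)\setminus\Lambda_{i-1}^{\vec{v}}(b)$ consists of a bounded, $\vec{v}$-dependent number of lattice points over which $d(T^{(m,n)}x,T^{(m,n)}y)$ can be encoded by the Park coordinate of $x$ and $y$. I would therefore introduce a continuous "fiber" distance $\widetilde{d}$ on $\widetilde{X}$ whose Birkhoff averages under $W$ precisely reproduce the sums $\sum_{(m,n)\in\Lambda_i^{\vec{v}}(b)} d(T^{(m,n)}x,T^{(m,n)}y)$ up to a multiplicative constant, and then show that, for every $\epsilon>0$, a $\hat{d}_k^{\vec{v},b}$-ball of radius $\epsilon$ at $x$ is sandwiched between two $\hat{d}_k^{W}$-balls of radii $c_1\epsilon$ and $c_2\epsilon$ at the corresponding point $\widetilde{x}\in\widetilde{X}$ (and symmetrically for the mean metrics). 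Pushing forward and pulling back finite $\epsilon$-spanning sets through the measurable factor $\widetilde{X}\to X$ then gives uniform upper bounds on $S(\hat{d}_k^{\vec{v},b})$ in terms of $S(\hat{d}_k^{W})$ and vice versa, proving \textcircled{4}.

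For items (d) and (e), I would mimic the above comparison on the level of individual orbits: the orbit-average $\frac{1}{\#\Lambda_k^{\vec{v}}(b)}\sum_{\Lambda_k^{\vec{v}}(b)} d(T^{(m,n)}x,T^{(m,n)}y)$ and the Birkhoff average $\frac{1}{k}\sum_{j=0}^{k-1}\widetilde{d}(W^j\widetilde{x},W^j\widetilde{y})$ coincide up to a constant, so $(\mu,\vec{v},b)$-mean equicontinuity (resp.\ $(\mu,\vec{v},b)$-equicontinuity in the mean) translates to $\widetilde{\mu}$-mean equicontinuity (resp.\ $\widetilde{\mu}$-equicontinuity in the mean) for $W$. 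At that point (a)$\Leftrightarrow$(b)$\Leftrightarrow$(c)$\Leftrightarrow$(d)$\Leftrightarrow$(e) is just the conjunction of arrows \textcircled{1}, \textcircled{3}, \textcircled{4} and these transfers, plus the trivial implication (e)$\Rightarrow$(d).

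The hard part will be the $b$-uniformity built into the statement ("for some $b$ (or for any $b$)"). Different $b$'s produce different Park $\mathbb{Z}$-systems, whose tower heights differ; so I must verify directly on $X$ that bounded complexity for $\{\hat{d}_k^{\vec{v},b_1}\}$ implies the same for $\{\hat{d}_k^{\vec{v},b_2}\}$. The natural route is a quantitative comparison $\hat{d}_k^{\vec{v},b_2}(x,y)\le C(b_1,b_2)\,\hat{d}_{k'}^{\vec{v},b_1}(x,y)+\eta$, where $k'$ and $C(b_1,b_2)$ depend on the ratio of strip widths but not on $k$, and where $\eta$ can be made small by uniform continuity of $d$ on the finite set of additional translates $T^{(m,n)}$ needed to fill the wider strip. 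A second subtlety is that when $\beta\notin\mathbb{Q}$, the points of $\Lambda^{\vec{v}}(b)$ do not sit on a single $\mathbb{Z}^2$-orbit of $\vec{v}$-translations, forcing the Park construction to be a skew product over an irrational rotation rather than a naive suspension; I expect the counting arguments above to go through verbatim once the fiber distance $\widetilde{d}$ is chosen correctly.
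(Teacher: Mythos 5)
Your overall skeleton coincides with the paper's for the heart of the matter: the equivalence of (b) and (c) is indeed obtained by passing to Park's $\mathbb{Z}$-system $(\widetilde{X},\widetilde{\mathcal{B}},\widetilde{\mu},W)$, invoking Lemma \ref{lem2} (arrow \textcircled{1}) and Proposition \ref{prop2} (arrow \textcircled{3}), and proving arrow \textcircled{4} for the mean metrics (Theorems \ref{thm5} and \ref{thm6}, Corollaries \ref{cor1} and \ref{cor2}). Where you diverge is in routing (a), (d), (e) through the Park system as well: the paper instead proves (a)$\Leftrightarrow$(d)$\Leftrightarrow$(e) for each fixed $b$ directly on $X$ (Theorem \ref{thm4}, via a Lusin-type decomposition into compact pieces $K_j\subset A_{N_j}(x_j)$), and gets (a)$\Leftrightarrow$(b) by combining Corollary \ref{cor2} with the $\mathbb{Z}$-action fact that $\{\bar d_k\}$- and $\{\hat d_k\}$-bounded complexity agree (Lemma \ref{lem3}, Theorem \ref{thm8}). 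Your route is not unreasonable, but it forces you to also transfer the equicontinuity notions and the max-mean metrics through the factor $\widetilde{X}\to X$, which is extra work the paper deliberately avoids.

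The genuine gaps are in your comparison step \textcircled{4} as stated. First, there is no continuous fiber distance whose $W$-Birkhoff averages ``precisely reproduce'' the sums over $\Lambda_k^{\vec{v}}(b)$ up to a multiplicative constant, and the claimed sandwich of a $\hat d_k^{\vec{v},b}$-ball between two $\hat d_k^{W}$-balls of radii $c_1\epsilon,c_2\epsilon$ is too strong: the $W$-orbit of $(x,s,t)$ records only the single staircase point $(i,[i\beta+t])$ in each column, while the strip contains up to $\lceil 2b\rceil+1$ points per column (and, for $b<1/2$ and $\beta\notin\mathbb{Q}$, many columns of $\Lambda^{\vec{v}}(b)$ contain no lattice point at all); moreover the normalizations $k$ and $\#(\Lambda_k^{\vec{v}}(b))$ do not match and the relevant column sets jump discontinuously with the circle coordinate. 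The correct comparison is only up to additive errors controlled by uniform continuity of the vertical translates $T^{(0,j)}$ (the paper's \eqref{4.1}), a density estimate such as $\#(\Lambda_k^{\vec{v}}(b))/\#(\Lambda_k^{\vec{v}}(1))>b/2$ for large $k$ (\eqref{3}), and a Markov-type count of bad columns — exactly the content of Claim \ref{123}. Second, in the direction ``directional bounded complexity $\Rightarrow$ bounded complexity of $\widetilde{\mu}$'' you must also cover the torus coordinates: the paper uses boundedly many $\rho_k$-balls $E_k$ for the rotation factor and the equidistribution estimate \eqref{4} on the number of $i$ with $[i\beta+u]\neq[i\beta+s]$; your proposal never addresses this, and without it pulling back spanning sets through the measurable projection does not yield the required bound. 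Third, the transfer of $(\mu,\vec{v},b)$-mean equicontinuity and equicontinuity in the mean to $\widetilde{\mu}$-mean equicontinuity of $W$ is asserted (``go through verbatim'') rather than proved; it needs a Fubini selection of a good fiber plus the same corrections, or else the paper's direct argument on $X$ (Theorem \ref{thm4}). These are fixable, and your $b_1$-versus-$b_2$ comparison (inclusion of strips plus finite translate covering and uniform continuity) is sound and mirrors Case \ref{C2} and Corollary \ref{cor1}, but as written the central quantitative claims of the proposal are not correct and the hardest estimates of the actual proof are precisely the ones left unproved.
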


  The structure of the paper is as follows. In Section 2, we recall some basic notions that we use in this paper. In Section 3, we show topological results for systems with bounded complexity, with respect to three kinds of metrics defined along any direction. In Section 4, we consider  corresponding results in the measure-theoretical setting. In Appendix A, we introduce the corresponding results of $\mathbb{Z}^q$-t.d.s. for any integer $q\geq 2$.
\section{Preliminaries}
In this section we recall some notions of dynamical systems that are used later (see \cite{EW,Ku1,Ku2,Peter}).
\subsection{General notions.} In this article, the sets of integers, non-negative integers, rational numbers and natural numbers are denoted by $\mathbb{Z}$, $\mathbb{Z}_+$, $\mathbb{Q}$ and $\mathbb{N}$, respectively. We use $\#(A)$ to denote the number of elements of a finite set $A$.

\subsection{Hausdorff metric} Let $K(X)$ be the hyperspace on $X$, i.e. the space of non-empty and closed subsets of $X$ equipped with the Hausdorff metric $d_H$ defined by
$$d_H(A,B)=\max\left\{\max_{x\in A}\min_{y\in B}d(x,y),\max_{y\in B}\min_{x\in A}d(x,y)\right\}$$
for $A,B\in K(X)$. As $(X,d)$ is compact, $(K(X),d_H)$ is compact as well. For $n\in \mathbb{N}$, it is easy to see that the map $X^n\to K(X),(x_1,\ldots,x_n)\mapsto\{x_1,\ldots,x_n\}$ is continuous. Then $\{A\in K(X):\#(A)\leq n\}$ is a closed subset of $K(X)$.

\subsection{Bowen metric, mean metric and max-mean metric for $\mathbb{Z}$-actions} Let $(X,T)$ be a $\mathbb{Z}$-t.d.s. with a metric $d$.
For $k\in \mathbb{N}$, we define three metrics on $X$ as follows. For $x,y \in X$, let
$$d_k(x,y)=\max\{d(T^ix,T^iy):i\in[0,k-1]\},$$
 $$\bar{d}_k(x,y)=\frac{1}{k}\sum_{i=0}^{k-1}d(T^ix,T^iy)$$
and $$\hat{d}_k(x,y)=\max\left\{\frac{1}{j}\sum_{i=0}^{j-1}d(T^ix,T^iy):j\in [1,k]\right\},$$
which is called Bowen metric, mean metric and max-mean metric, respectively.
It is clear that for all $k\in \mathbb{N}$ and $x,y\in X$,
$$d_k(x,y)\geq \hat{d}_k(x,y)\geq  \bar{d}_k(x,y).$$

\subsection{Notions of equicontinuity} In order to study the chaotic behaviors of dynamical systems, Huang, Lu and Ye \cite{HLY} introduced a notion that reflects the equicontinuity with respect to a subset or a measure. Following \cite{HLY}, for a $\mathbb{Z}$-t.d.s. $(X,T)$, we say a subset $K$ of $X$ is equicontinuous if for any $\epsilon>0$, there exists $\delta>0$ such that $d(T^nx,T^ny)<\epsilon$ for all $n\in \mathbb{Z}_+$ and all $x,y\in K$ with $d(x,y)<\delta$. Given $\mu\in M(X,T)$, we say that $T$ is $\mu$-equicontinuous if for any $\tau>0$, there exists a $T$-equicontinuous measurable subset $K$ of $X$ with $\mu(K)>1-\tau$.

When studying the relation between bounded complexity, mean equicontinuity and discrete spectrum for $\mathbb{Z}$-actions, Huang et al. \cite{H1} introduced notions called  $\mu$-mean equicontinuity and $\mu$-equicontinuity in the mean. Following \cite{H1}, for a  $\mathbb{Z}$-t.d.s. $(X,T)$, we say a subset $K$ of $X$ is equicontinuous in the mean if for any $\epsilon>0$, there exists $\delta>0$ such that $\bar{d}_n(x,y)<\epsilon$ for all $n\in \mathbb{N}$ and $x,y\in K$ with $d(x,y)<\delta$. We say a subset $K$ of $X$ is mean equicontinuous if for any $\epsilon>0$, there exists $\delta>0$ such that $\limsup\limits_{n\to \infty}\bar{d}_n(x,y)<\epsilon$ for all $x,y\in K$ with $d(x,y)<\delta$. Given $\mu\in M(X,T)$, we say that $T$ is $\mu$-equicontinuous in the mean if for any $\tau>0$, there exists a measurable subset $K$ of $X$ with $\mu(K)>1-\tau$ that is equicontinuous in the mean and $\mu$-mean equicontinuous if for any $\tau>0$, there exists a measurable subset $K$ of $X$ with $\mu(K)>1-\tau$ that is mean equicontinuous.

\subsection{Kronecker algebra and discrete spectrum.}
\subsubsection{Kronecker algebra and discrete spectrum for $\mathbb{Z}$-actions.}
In this subsection, let $(X,T)$ be a $\mathbb{Z}$-t.d.s., $\mu\in M(X,T)$ and $\mathcal{H}:=L^2(X,\mathcal{B}_X,\mu)$. In the complex Hilbert space $\mathcal{H}$, we define the unitary operator $U_T:\mathcal{H}\rightarrow \mathcal{H}$ by $U_Tf=f\circ T$ for any $f\in\mathcal{H}$.  We say that $f$ is an almost periodic function if $\overline{\left\{U_T^n f:n\in \mathbb{Z}\right\}}$ is a compact subset of $\mathcal{H}$.
As one can easily prove that the set of all bounded almost periodic functions forms a $U_T$-invariant and conjugation-invariant subalgebra of $\mathcal{H}$ (denoted by $\mathcal{A}_c$). The set of almost periodic functions is just the closure of $\mathcal{A}_c$ (denoted by $\mathcal{H}_c$). It is well known that there exists a $T$-invariant sub-$\sigma$-algebra $\mathcal{K}_{\mu}$ of $\mathcal{B}_X$ such that $\mathcal{H}_c=L^2(X,\mathcal{K}_{\mu},\mu)$ (see \cite[Theorem 1.2]{Zi}). The sub-$\sigma$-algebra $\mathcal{K}_{\mu}$ is called the Kronecker algebra of $(X,\mathcal{B}_X,\mu,T)$.    From the constrution of $\mathcal{H}_c$, we can see  $\mathcal{K}_\mu$ consists of all $B\in \mathcal{B}_X$ such that
$\overline{\{ U_T^n1_B:n\in \mathbb{Z} \}}$ is compact in $\mathcal{H}.$
We say $\mu\in M(X,T)$ has discrete spectrum if $\mathcal{B}_X=\mathcal{K}_{\mu}$.
\subsubsection{Directional Kronecker algebra and discrete spectrum \cite{LX}.}
  In this subsection, let $(X ,T)$ be a $\mathbb{Z}^2$-t.d.s. Let $\mu\in M(X,T)$, $\vec{v}=(1,\beta)\in\mathbb{R}^2$ be a direction vector and $b\in (0,\infty)$. We put $$\Lambda^{\vec{v}}(b)=\left\{(m,n)\in\mathbb{Z}^2:\beta m-b\leq n\leq \beta m+b\right\}.$$ Let $\mathcal{A}_c^{\vec{v}}(b)$ be the collection of $f\in \mathcal{H}:=L^2(X,\mathcal{B}_X,\mu)$ such that
$$\overline{\left\{U_T^{(m,n)}f:(m,n)\in \Lambda^{\vec{v}}(b) \right\}}\text{ is compact in }\mathcal{H}.$$
It is easy to see that  $\mathcal{A}_c^{\vec{v}}(b)$ is a $U_{T^{\vec{w}}}$-invariant for all $\vec{w}\in\mathbb{Z}^2$ and conjugation-invariant subalgebra of $\mathcal{H}$. It is well known that (see \cite[Theorem 1.2]{Zi}) there exists a $T$-invariant sub-$\sigma$-algebra  $\mathcal{K}_\mu^{\vec{v}}(b)$  of $\mathcal{B}_X$ such that
\begin{align}\label{1}\mathcal{A}_c^{\vec{v}}(b)=L^2(X,\mathcal{K}_\mu^{\vec{v}}(b),\mu).\end{align}
Directly from \eqref{1}, the $\vec{v}$-directional Kronecker algebra of $(X ,\mathcal{B}_X, \mu, T)$ can be defined by
$$\mathcal{K}_\mu^{\vec{v}}(b)=\left\{B\in\mathcal{B}_X: \overline{\left\{U_T^{(m,n)}1_B :(m,n)\in \Lambda^{\vec{v}}(b) \right\}}\text{ is compact in } L^2(X,\mathcal{B}_X,\mu) \right\}.$$
Since the definition of $\mathcal{K}_\mu^{\vec{v}}(b)$ is independent of the selection of $b\in (0,\infty)$ (refer to \cite{LX}), we omit $b$ and write $\mathcal{K}_\mu^{\vec{v}}(b)$ as $\mathcal{K}_\mu^{\vec{v}}$. We say $\mu\in M(X,T)$ has $\vec{v}$-discrete spectrum if $\mathcal{K}_\mu^{\vec{v}}=\mathcal{B}_X$.

   \section{Topological dynamical systems with directional bounded topological complexity}
   In this section, we study the topological complexity of dynamical systems with respect to three kinds of metrics defined along directions.
   \subsection{Topological complexity with respect to $\{d_k^{\vec{v},b}\}$ }
  Let $(X,T)$ be a  $\mathbb{Z}^2$-t.d.s. with a metric $d$. Let   $\vec{v}=(1,\beta)\in\mathbb{R}^2$ be a direction vector and $b\in (0,\infty)$.
   We put $$\Lambda^{\vec{v}}(b)=\left\{(m,n)\in\mathbb{Z}^2:\beta m-b\leq n\leq \beta m+b\right\}.$$
   For $k\in \mathbb{N}$ and $x,y\in X$, define
   $$d_k^{\vec{v},b}(x,y)=\max_{(m,n)\in \Lambda_k^{\vec{v}}(b) }\{d(T^{(m,n)}x,T^{(m,n)}y)\}.$$
   It is easy to see that for each $k\in \mathbb{N}$, $d_k^{\vec{v},b}$ is a metric on $X$ which is topologically equivalent to the metric $d$. Given $x\in X$ and $\epsilon>0$, the open ball of center $x$ and radius $\epsilon$ in the metric $d_k^{\vec{v},b}$ is
   $$B_{d_k^{\vec{v},b}}(x,\epsilon)=\{y\in X: d_k^{\vec{v},b}(x,y)<\epsilon\}.$$
   Let $K$ be a subset of $X$, $k\in \mathbb{N}$ and $\epsilon>0$. A subset $F$ of $K$ is said to $(k,\epsilon)$-span $K$ with respect to $T$, if  for every $x\in K$,  there exists $y\in F$ with $d_k^{\vec{v},b}(x,y)<\epsilon$, that is
   $$K\subset \bigcup_{x\in F}B_{d_k^{\vec{v},b}}(x,\epsilon).$$
   Let $span_K^{\vec{v},b}(k,\epsilon)$ denote the smallest cardinality of any $(k,\epsilon)$-spanning set for $K$ with respect to $T$, that is  $$span^{\vec{v},b}_K(k,\epsilon)=\min\left\{\#(F):F\subset X\subset \bigcup_{x\in F}B_{d_k^{\vec{v},b}}(x,\epsilon) \right\}.$$
   We say that a subset $K$ of $X$ has bounded topological complexity with respect to $\{d_k^{\vec{v},b}\}$ if for any $\epsilon>0$, there exists a positive integer $C=C(\epsilon)$ such that $span_K^{\vec{v},b}(k,\epsilon)\leq C$ for all $k\in \mathbb{N}$.
   
    Meanwhile, we say that a subset $K$ of $X$ is $(\vec{v},b)$-equicontinuous if for any $\epsilon>0$, there is $\delta>0$ such that whenever $x,y\in K$ with $d(x,y)<\delta$, $d(T^{(m,n)}x,T^{(m,n)}y)<\epsilon$ for all $(m,n)\in \Lambda^{\vec{v}}(b)$.

   We first show that a subset with bounded topological complexity with respect to $\{d_k^{\vec{v},b}\}$ is equivalent to the $(\vec{v},b)$-equicontinuity property.
   \begin{thm}\label{thm1}
 Let $(X,T)$ be a  $\mathbb{Z}^2$-t.d.s., $K$ be a compact subset of $X$ and   $\vec{v}=(1,\beta)\in\mathbb{R}^2$ be a direction vector. Then for any $b\in(0,\infty)$ the following two statements are equivalent.
 \begin{itemize}
 	\item[(a)]$K$ has bounded topological complexity with respect to $\{d_k^{\vec{v},b}\}$.
 	\item[(b)]$K$ is $(\vec{v},b)$-equicontinuous.
 \end{itemize}

\end{thm}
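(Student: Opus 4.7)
The plan is to establish the equivalence by separate arguments for the two directions, with $(b)\Rightarrow(a)$ being a straightforward compactness argument and $(a)\Rightarrow(b)$ requiring a careful hyperspace limit argument.

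For $(b)\Rightarrow(a)$, given $\epsilon>0$, the $(\vec{v},b)$-equicontinuity of $K$ supplies $\delta>0$ such that $d(x,y)<\delta$ with $x,y\in K$ forces $d(T^{(m,n)}x,T^{(m,n)}y)<\epsilon$ for every $(m,n)\in\Lambda^{\vec{v}}(b)$. By compactness, $K$ admits a finite $\delta$-net of cardinality $N=N(\delta)$, and this net is a $(k,\epsilon)$-spanning set for $K$ under every metric $d_k^{\vec{v},b}$, yielding $\mathrm{span}_K^{\vec{v},b}(k,\epsilon)\leq N$ uniformly in $k$.

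For $(a)\Rightarrow(b)$, fix $\epsilon>0$ and set $C=C(\epsilon/9)$. For each $k$, choose a $(k,\epsilon/9)$-spanning set $F_k\subset K$ with $\#(F_k)\leq C$. Each $F_k$ belongs to the compact subspace $\{A\in K(X):\#(A)\leq C\}$ of $(K(X),d_H)$, so along a subsequence $F_{k_j}\to F^{\ast}=\{z_1,\dots,z_m\}$ in the Hausdorff metric, with $m\leq C$. The heart of the argument is the following limiting claim: for each $y\in K$ there is some $z_i\in F^{\ast}$ with
\[
d\bigl(T^{(p,q)}y,T^{(p,q)}z_i\bigr)\leq\epsilon/9\qquad\text{for all }(p,q)\in\Lambda^{\vec{v}}(b).
\]
To prove the forward part ($p\geq 0$), for each $k_j$ pick $x_{k_j}\in F_{k_j}$ realizing $d_{k_j}^{\vec{v},b}(y,x_{k_j})<\epsilon/9$, pass to a sub-subsequence so that $x_{k_j}\to z_i\in F^{\ast}$, and note that any fixed $(p,q)$ with $p\geq 0$ lies in $\Lambda_{k_j}^{\vec{v}}(b)$ for all large $j$, so passing to the limit gives the bound. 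Using the claim, define the closed sets $V_i=\{y\in K:d(T^{(p,q)}y,T^{(p,q)}z_i)\leq\epsilon/9\text{ for all }(p,q)\in\Lambda^{\vec{v}}(b)\}$, which form a finite closed cover $K=\bigcup_{i=1}^m V_i$. Then $(\vec{v},b)$-equicontinuity follows by contradiction: if it failed, sequences $y_n,y'_n\in K$ with $d(y_n,y'_n)\to 0$ and $(p_n,q_n)\in\Lambda^{\vec{v}}(b)$ with $d(T^{(p_n,q_n)}y_n,T^{(p_n,q_n)}y'_n)\geq\epsilon$ would exist. By pigeonhole and compactness, after passing to a subsequence, $y_n\in V_{i_0}$ and $y'_n\in V_{j_0}$ with common limit $y^{\ast}\in V_{i_0}\cap V_{j_0}$, which forces $d(T^{(p,q)}z_{i_0},T^{(p,q)}z_{j_0})\leq 2\epsilon/9$ for every $(p,q)\in\Lambda^{\vec{v}}(b)$; triangle inequality through $z_{i_0}$ and $z_{j_0}$ then gives $d(T^{(p_n,q_n)}y_n,T^{(p_n,q_n)}y'_n)\leq 4\epsilon/9<\epsilon$, the desired contradiction.

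The main obstacle I foresee is reconciling the forward-only window $\Lambda_k^{\vec{v}}(b)\subset[0,k-1]\times\mathbb{Z}$ with the bi-directional set $\Lambda^{\vec{v}}(b)$ appearing in the definition of $(\vec{v},b)$-equicontinuity: the hyperspace limit argument yields orbit closeness only for $(p,q)$ with $p\geq 0$, so the case $p<0$ must be recovered separately. I expect this to be handled by exploiting the symmetry $\Lambda^{\vec{v}}(b)=-\Lambda^{\vec{v}}(b)$ together with the group structure of the $\mathbb{Z}^2$-action, via the classical fact that forward equicontinuity of a homeomorphism of a compact metric space promotes to equicontinuity of the full group action through an enveloping-semigroup argument (recurrence of the identity in the closure of the forward orbit of maps forces $T^{-\vec{w}}$ to be a pointwise limit of forward iterates).
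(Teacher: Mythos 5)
Your (b)$\Rightarrow$(a) direction and the core of your (a)$\Rightarrow$(b) argument --- choosing spanning sets $F_k\subset K$ of cardinality at most $C(\epsilon/9)$, passing to a Hausdorff limit $F^{\ast}$, producing a finite closed cover of $K$ by sets on which all the metrics $d_k^{\vec{v},b}$ are uniformly small, and concluding by a triangle-inequality contradiction --- are essentially the paper's own proof of Theorem \ref{thm1} (the paper uses the constant $\epsilon/6$ and sets $K_t$ built from the limit points $z_t$; your bookkeeping with the $V_i$ and the bound $4\epsilon/9<\epsilon$ is the same idea and is correct). The hyperspace-limit step is also handled correctly: fixing the sub-subsequence once per $y$ makes a single $z_i$ work simultaneously for all lattice points of the forward window.

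The genuine problem is the final step you propose for points $(p,q)\in\Lambda^{\vec{v}}(b)$ with $p<0$. The promotion of forward equicontinuity to equicontinuity of the full group action (by an enveloping-semigroup argument, or by the fact that a surjective non-expansive map of a compact metric space is an isometry) is valid for the action on all of $X$, or on a $T$-invariant compact set; here $K$ is an arbitrary compact subset, in general not invariant, and forward orbit-closeness on $K$ gives no control of negative iterates on $K$. Concretely, take $X=S^1$, let $T^{(1,0)}$ be a north--south homeomorphism with repelling fixed point $N$ and attracting fixed point $S$, $T^{(0,1)}=\mathrm{id}$, $\vec{v}=(1,0)$, $b=1/10$, and let $K$ be a closed arc containing $S$ in its interior and avoiding $N$: then $K$ is forward equicontinuous, hence has bounded complexity with respect to $\{d_k^{\vec{v},b}\}$, yet points $y\in K$ arbitrarily close to $S$ have backward iterates escaping every neighbourhood of $S$, so no bound over the $p<0$ part of $\Lambda^{\vec{v}}(b)$ can be recovered. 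Thus your patch cannot work in the stated generality (and your Claim, as stated over the full strip, is stronger than what the limit argument proves). What saves the statement is that it should be read --- as the paper's own proof implicitly does, since its concluding contradiction only uses witnesses $(m_l,n_l)$ lying in some $\Lambda_k^{\vec{v}}(b)$, i.e.\ with $m_l\ge 0$ --- with $(\vec{v},b)$-equicontinuity taken over the forward half-strip $\Lambda^{\vec{v}}(b)\cap(\mathbb{Z}_+\times\mathbb{Z})$, in line with the $\mathbb{Z}$-action convention recalled in Section 2; under that reading your forward claim already completes the proof and the extra step is unnecessary.
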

\begin{proof}
(b) $\Rightarrow$ (a). Fix $\epsilon>0$. By the definition of $(\vec{v},b)$-equicontinuity, there exists $\delta>0$ such that $d(T^{(m,n)}x,T^{(m,n)}y)<\epsilon$ for all $(m,n)\in \Lambda^{\vec{v}}(b)$ and $x,y\in K$ with $d(x,y)<\delta$. By the compactness of $K$, there exists a finite subset $F$ of $K$ such that $K\subset  \bigcup_{x\in F}B_{d}(x,\delta)$. Then $K\subset \bigcup_{x\in F}B_{d_k^{\vec{v},b}}(x,\epsilon)$ for all $k\in \mathbb{N}$. So $K$ has bounded topological complexity with respect to $\{d_k^{\vec{v},b}\}$.

(a) $\Rightarrow$ (b). Suppose the contrary that $K$ is not $(\vec{v},b)$-equicontinuous. There exists  $\epsilon>0$, for any $l\in \mathbb{N}$, there are $x_l,y_l\in K$ and $(m_l,n_l)\in \Lambda^{\vec{v}}(b)$ such that $d(x_l,y_l)<\frac{1}{l}$ but $d(T^{(m_l,n_l)}x_l,T^{(m_l,n_l)}y_l)\geq\epsilon$. Passing to a subsequence if necessary, we may assume that $\lim\limits_{l\to\infty}x_l=x_0$. Then we have $x_0\in K$ and $\lim\limits_{l\to\infty}y_l=x_0$. For any $l\in\mathbb{N}$, by the triangle inequality, either $d(T^{(m_l,n_l)}x_l,T^{(m_l,n_l)}x_0)\geq\epsilon/2$ or $d(T^{(m_l,n_l)}y_l,T^{(m_l,n_l)}x_0)\geq\epsilon/2$.
Without loss of generality, we may assume $d(T^{(m_l,n_l)}x_l,T^{(m_l,n_l)}x_0)\geq\epsilon/2$ for all $l\in \mathbb{N}$.

As $K$ has bounded topological complexity with respect to $\{d_k^{\vec{v},b}\}$, for the constant $\epsilon/6$, there exists $C=C(\epsilon/6)>0$ such that for every $k\in \mathbb{N}$, there exists a subset $F_k$ of $K$ with $\#(F_k)\leq C$ such that $K\subset\bigcup_{x\in F_k}B_{d_k^{\vec{v},b}}(x,\epsilon/6)$. We view $\{F_k\}$ as a sequence in the hyperspace $K(X)$. By the compactness of $K(X)$, there is a subsequence $\{F_{k_l}\}$ of $\{F_k\}$ such that $\lim\limits_{l\to \infty}F_{k_l}=F$ in the Hausdorff metric $d_H$. Since $F_k\subset K$ for all $k\in \mathbb{N}$ and $K$ is compact, we have $F\subset K$. By the fact $\{A\in K(X): \#(A)\leq C\}$ is closed, we have $\#(F)\leq C$. For any $l\in \mathbb{N}$ and $x\in K$, there exists $z_{k_l}\in F_{k_l}$ such that $d_{k_l}^{\vec{v},b}(z_{k_l},x)<\epsilon/6$. Without loss of generality, we may assume that $\lim\limits_{l\to \infty}z_{k_l}=z$. Then $z\in F$. As the sequence $\{d_k^{\vec{v},b}\}$ of metirc is increasing, that is, $d_k^{\vec{v},b}(u,v)\leq d_{k+1}^{\vec{v},b}(u,v)$ for all $u,v\in X$ and $k\in \mathbb{N}$, we have $$d_{k_l}^{\vec{v}}(x,z_{k_t})\leq d_{k_t}^{\vec{v}}(x,z_{k_t})<\epsilon/6$$ for all $t\geq l$. Let $t\to \infty$. Then we get $d_{k_l}^{\vec{v},b}(x,z)\leq \epsilon/6$. This implies that $$K= \bigcup_{z\in F}\{x\in K : d_{k_l}^{\vec{v},b}(x,z)\leq \epsilon/6\}$$ for all $l\in\mathbb{N}$. Enumerate $F$ as $\{z_1,\ldots z_r\}$ and let
$$K_t=\bigcap_{k=1}^{\infty}\{x\in K:d_{k}^{\vec{v},b}(x,z_t)\leq \epsilon/6\}$$
for $t\in\{1,2,\ldots,r\}$. Then each $K_t$ is a closed set, since $d_k^{\vec{v},b}$ is continuous. By the monotonicity of $\{d_k^{\vec{v},b}\}$, we have $K=\bigcup_{t=1}^r{K_t}$.

For the sequence $\{x_l\}$ in $K$, passing to a subsequence if necessary, we assume that the sequence $\{x_l\}$ in the same $K_t$ for some $t\in\{1,2,\ldots,r\}$. As $K_t$ is closed, $x_0$ is also in $K_t$. Note that for any $u,v\in K_t$ and $k\in \mathbb{N}$, we have $$d_k^{\vec{v},b}(u,v)\leq d_k^{\vec{v},b}(u,z_t)+d_k^{\vec{v},b}(z_t,v)\leq \epsilon/3.$$
Particularly, we have $d_{k}^{\vec{v},b}(x_l,x_0)\leq \epsilon/3$ for any $l,k\in \mathbb{N}$, which contradicts the above assumption that is $d(T^{(m_l,n_l)}x_l,T^{(m_l,n_l)}x_0)\geq\epsilon/2$ for all $l\in \mathbb{N}$. Now we finish the proof of Theorem \ref{thm1}.
\end{proof}

\subsection{Topological complexity with respect to $\{\hat{d}_k^{\vec{v},b}\}$}  Let $(X,T)$ be a  $\mathbb{Z}^2$-t.d.s. with a metric $d$. Let   $\vec{v}=(1,\beta)\in\mathbb{R}^2$ be a direction vector and $b\in (0,\infty)$.
 Define
$$\hat{d}_k^{\vec{v},b}(x,y)=\max_{1\leq i\leq k}\left\{\frac{1}{\#(\Lambda_i^{\vec{v}}(b))}\sum_{(m,n)\in\Lambda_i^{\vec{v}}(b)}d(T^{(m,n)}x,T^{(m,n)}y)\right\}$$
for $k\in \mathbb{N}$ and $x,y\in X$.
It is easy to see that for each $k\in \mathbb{N}$, $\hat{d}_k^{\vec{v},b}$ is a metric on $X$ which is topologically equivalent to the metric $d$. For $x\in X$ and $\epsilon>0$, let $B_{\hat{d}_k^{\vec{v},b}}(x,\epsilon)=\{y\in X: \hat{d}_k^{\vec{v},b}(x,y)<\epsilon\}.$ Let $K$ be a subset of $X$. For $k\in \mathbb{N}$ and $\epsilon>0$, define
$$\widehat{span}_K^{\vec{v},b}(k,\epsilon)=\min\left\{\#(F):F\subset X\subset \bigcup_{x\in F}B_{\hat{d}_k^{\vec{v},b}}(x,\epsilon)\right\}.$$
We say that a subset $K$ of $X$ has bounded topological complexity with respect to $\{\hat{d}_k^{\vec{v},b}\}$ if for any $\epsilon>0$, there exists a positive integer $C=C(\epsilon)$ such that $\widehat{span}_K^{\vec{v},b}(k,\epsilon)\leq C$ for all $k\in \mathbb{N}$. Since $\hat{d}_k^{\vec{v},b}(x,y)\leq d_k^{\vec{v},b}(x,y)$ for all $k\in \mathbb{N}$ and $x,y\in X$, if $K$ has bounded topological complexity with respect to $\{d_k^{\vec{v},b}\}$, then it has bounded topological complexity with respect to $\{\hat{d}_k^{\vec{v},b}\}$ as well.

 We say that a subset $K$ of $X$ is $(\vec{v},b)$-equicontinuous in the mean if for any $\epsilon>0$, there is $\delta>0$ such that $\hat{d}_k^{\vec{v},b}(x,y)<\epsilon$ for all $k\in\mathbb{N}$ and $x,y\in X$ with $d(x,y)<\delta$.
Corresponding with mean equicontinuity, we say a subset $K$ of $X$ is $(\vec{v},b)$-mean equicontinuous if for any $\epsilon>0$, there exists $\delta>0$ such that
$$\limsup_{k\to \infty}\frac{1}{\#(\Lambda_k^{\vec{v}}(b))}\sum_{(m,n)\in\Lambda_k^{\vec{v}}(b)}d(T^{(m,n)}x,T^{(m,n)}y)<\epsilon$$
for all $x,y\in K$ with $d(x,y)<\delta$. If $X$ is $(\vec{v},b)$-mean equicontinuous, then we say that $(X,T)$ is $(\vec{v},b)$-mean equicontinuous. By the definition, it is clear that if $K$ is $(\vec{v},b)$-equicontinuous in the mean, then it is $(\vec{v},b)$-mean equicontinuous.

 The following result follows the same idea lies in Theorem \ref{thm1} and just replaces the distance $d_k^{\vec{v},b}$ by $\hat{d}_k^{\vec{v},b}$, as the sequence $\{\hat{d}_k^{\vec{v},b}\}$ of metrics is also increasing.
 \begin{thm}\label{thm2}
Let $(X,T)$ be a  $\mathbb{Z}^2$-t.d.s., $K$ be a compact subset of $X$ and  $\vec{v}=(1,\beta)\in\mathbb{R}^2$ be a direction vector.  Then for any $b\in (0,\infty)$ the following two statements are equivalent.
\begin{itemize}
	\item[(a)]$K$ has bounded topological complexity with respect to $\{\hat{d}_k^{\vec{v},b}\}$.
	\item[(b)]$K$ is $(\vec{v},b)$-equicontinuous in the mean.
\end{itemize}
 \end{thm}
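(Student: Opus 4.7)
My plan is to mirror the proof of Theorem \ref{thm1} essentially verbatim, replacing $d_k^{\vec{v},b}$ by $\hat{d}_k^{\vec{v},b}$ throughout. The two structural facts about $\{d_k^{\vec{v},b}\}$ that the earlier argument actually used were (i) monotonicity in $k$, and (ii) joint continuity of each metric on $X\times X$. Both transfer immediately to $\{\hat{d}_k^{\vec{v},b}\}$: monotonicity is built into the definition as a maximum over $1\leq i\leq k$, and continuity follows because only finitely many continuous terms $d(T^{(m,n)}\cdot,T^{(m,n)}\cdot)$ contribute for any fixed $k$.

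For (b) $\Rightarrow$ (a), I would fix $\epsilon>0$, use $(\vec{v},b)$-equicontinuity in the mean to obtain $\delta>0$ such that $d(x,y)<\delta$ forces $\hat{d}_k^{\vec{v},b}(x,y)<\epsilon$ \emph{uniformly in $k$}, and then cover the compact set $K$ by finitely many $d$-balls of radius $\delta$ centred in $K$. Their centres form a simultaneous $\hat{d}_k^{\vec{v},b}$-$\epsilon$-spanning set whose cardinality does not depend on $k$, so $\widehat{span}_K^{\vec{v},b}(k,\epsilon)$ is uniformly bounded.

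For (a) $\Rightarrow$ (b), I would argue by contradiction along the template of Theorem \ref{thm1}. Assuming $K$ is not $(\vec{v},b)$-equicontinuous in the mean, extract $\epsilon>0$, sequences $x_l,y_l\in K$ with $d(x_l,y_l)<1/l$, and $k_l\in\mathbb{N}$ with $\hat{d}_{k_l}^{\vec{v},b}(x_l,y_l)\geq\epsilon$. A subsequence gives $x_l\to x_0\in K$, $y_l\to x_0$, and by the triangle inequality for the metric $\hat{d}_{k_l}^{\vec{v},b}$ I may assume $\hat{d}_{k_l}^{\vec{v},b}(x_l,x_0)\geq\epsilon/2$. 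Next, invoke bounded complexity at scale $\epsilon/6$ to produce $F_k\subset K$ with $\#(F_k)\leq C$ and $K\subset\bigcup_{z\in F_k}B_{\hat{d}_k^{\vec{v},b}}(z,\epsilon/6)$; compactness of the hyperspace $K(X)$ and closedness of $\{A\in K(X):\#(A)\leq C\}$ let me pass to a subsequence with $F_{k_l}\to F=\{z_1,\ldots,z_r\}$ in Hausdorff metric. Monotonicity then upgrades the $k$-by-$k$ spanning property to: for every $x\in K$ and every $l$, there exists $z_t\in F$ with $\hat{d}_{k_l}^{\vec{v},b}(x,z_t)\leq\epsilon/6$ (take the limit of $z_{k_s}\in F_{k_s}$ witnessing coverage at $k_s\geq k_l$ and use monotonicity plus continuity). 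Defining $K_t=\bigcap_{k\in\mathbb{N}}\{x\in K:\hat{d}_k^{\vec{v},b}(x,z_t)\leq\epsilon/6\}$, each $K_t$ is closed and $K=\bigcup_{t=1}^r K_t$; a pigeonhole places a subsequence of $\{x_l\}$ (hence also $x_0$) in a common $K_t$, and the triangle inequality inside $K_t$ yields $\hat{d}_k^{\vec{v},b}(x_l,x_0)\leq\epsilon/3$ for all $l,k$, contradicting $\hat{d}_{k_l}^{\vec{v},b}(x_l,x_0)\geq\epsilon/2$. As the author already flags, no genuinely new obstacle appears here; the one mild improvement is that the step which required choosing $k$ large enough that $(m_l,n_l)\in\Lambda_k^{\vec{v}}(b)$ in the previous proof is now automatic, since monotonicity of $\hat{d}_k^{\vec{v},b}$ lets me read off $\hat{d}_{k_l}^{\vec{v},b}(x_l,x_0)\geq\epsilon/2$ directly.
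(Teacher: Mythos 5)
Your proposal is correct and is essentially the paper's own proof: the paper disposes of Theorem \ref{thm2} by observing that the argument of Theorem \ref{thm1} carries over verbatim with $d_k^{\vec{v},b}$ replaced by $\hat{d}_k^{\vec{v},b}$, the only properties needed being monotonicity in $k$ and continuity of each metric, exactly the two facts you isolate and verify. Your closing remark that the ``choose $k$ large enough that $(m_l,n_l)\in\Lambda_k^{\vec{v}}(b)$'' step becomes automatic is a fair minor simplification, but it does not change the route.
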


\subsection{Topological complexity with respect to $\{\bar{d}_k^{\vec{v},b}\}$ }
Let $(X,T)$ be a  $\mathbb{Z}^2$-t.d.s. with a metric $d$. Let   $\vec{v}=(1,\beta)\in\mathbb{R}^2$ be a direction vector and $b\in (0,\infty)$.
For $k\in \mathbb{N}$ and $x,y\in X$, define
$$\bar{d}_k^{\vec{v},b}(x,y)=\frac{1}{\#(\Lambda_k^{\vec{v}}(b))}\sum_{(m,n)\in\Lambda_k^{\vec{v}}(b)}d(T^{(m,n)}x,T^{(m,n)}y).$$
It is easy to see that for each $k\in \mathbb{N}$, $\bar{d}_k^{\vec{v},b}$ is a metric on $X$ which is topologically equivalent to the metric $d$. For $x\in X$ and $\epsilon>0$, let $B_{\bar{d}_k^{\vec{v},b}}(x,\epsilon)=\{y\in X: \bar{d}_k^{\vec{v},b}(x,y)<\epsilon\}.$ Let $K$ be a subset of $X$. For $k\in \mathbb{N}$ and $\epsilon>0$, define
$$\overline{span}^{\vec{v},b}_K(k,\epsilon)=\min\left\{\#(F):F\subset K\subset \bigcup_{x\in F}B_{\bar{d}_k^{\vec{v},b}}(x,\epsilon) \right\}.$$
We say that a subset $K$ of $X$ has bounded topological complexity with respect to $\{\bar{d}_k^{\vec{v},b}\}$ if for any $\epsilon>0$, there exists a positive integer $C=C(\epsilon)$ such that $\overline{span}_K^{\vec{v},b}(k,\epsilon)\leq C$ for all $k\in \mathbb{N}$. 

Since $\bar{d}_k^{\vec{v},b}(x,y)\leq \hat{d}_k^{\vec{v},b}(x,y)$ for all $k\in \mathbb{N}$ and $x,y\in X$, if $K$ has bounded topological complexity with respect to $\{\hat{d}_k^{\vec{v},b}\}$, then it has bounded topological complexity with respect to $\{\bar{d}_k^{\vec{v},b}\}$ as well. 
However, as the sequence $\{\bar{d}_k^{\vec{v},b}\}$ of metrics might be not monotonous, dynamical systems with bounded topological complexity with respect to $\{\bar{d}_k^{\vec{v},b}\}$ does not have the similar properties to those with respect to $\{\hat{d}_k^{\vec{v},b}\}$ or $\{d_k^{\vec{v},b}\}$.  Huang et al. \cite[Proposition 3.8 and Propostion 3.9]{H1} constructed examples to show that there exists a  $\mathbb{Z}$-t.d.s. that has bounded complexity with respect to $\{\bar{d}_k^{\vec{v},b}\}$ but it is not $(\vec{v},b)$-equicontinuous, where $\vec{v}=(1,0)$ and $b=1/10$. Thus the relation between bounded complexity with respect to $\{\bar{d}_k^{\vec{v},b}\}$ and $(\vec{v},b)$-equicontinuity is more confusion. However, if a dynamical system has bounded topological complexity with respect to $\{\bar{d}_k^{\vec{v},b}\}$ for some $b\in(0,\infty)$, then by Theorem \ref{thm6} in Section 4 every invariant Borel probability measure has $\vec{v}$-discrete spectrum. So it is simple in the measure-theoretic sense even for irrational directions.

\section{Invariant measures with bounded-theoretic complexity }
In this section, we investigate the measure-theoretic complexity of invariant Borel probability measures with respect to three kinds of metrics defined along directions.
\subsection{Measure-theoretic complexity with respect to $\{d_k^{\vec{v},b}\}$}
 Let $(X,T)$ be a  $\mathbb{Z}^2$-t.d.s. with a metric $d$ and $\mu\in M(X,T)$. Let   $\vec{v}=(1,\beta)\in\mathbb{R}^2$ be a direction vector and $b\in (0,\infty)$. For $k\in \mathbb{N}$ and $\epsilon>0$, let
 $$span_{\mu}^{\vec{v},b}(k,\epsilon)=\min\left\{\#(F):F\subset X\text{ and } \mu\left(\bigcup_{x\in F}B_{d_k^{\vec{v},b}}(x,\epsilon)\right)>1-\epsilon\right\}.$$
We say that $\mu$ has bounded complexity with respect to $\{d_k^{\vec{v},b}\}$ if for any $\epsilon>0$, there exists a positive integer $C=C(\epsilon)$ such that $span_{\mu}^{\vec{v},b}(k,\epsilon)\leq C$ for all $k\in \mathbb{N}$. We say that $T$ is $(\mu,\vec{v},b)$-equicontinuous if for any $\tau>0$, there exists a $(\vec{v},b)$-equicontinuous measurable subset $K$ of $X$ with $\mu(K)>1-\tau$.

The following result shows that $\mu\in M(X,T)$ with bounded complexity with respect to $\{d_k^{\vec{v},b}\}$ if and only if $T$ is  $(\mu,\vec{v},b)$-equicontinuious.
\begin{thm}\label{thm3}
 Let $(X,T)$ be a  $\mathbb{Z}^2$-t.d.s., $\mu\in M(X,T)$ and  $\vec{v}=(1,\beta)\in\mathbb{R}^2$ be a direction vector. Then for any $b\in (0,\infty)$ the following two statements are equivalent.
 \begin{itemize}
\item[(a)]$\mu$ has bounded complexity with respect to $\{d_k^{\vec{v},b}\}$.
\item[(b)]$T$ is $(\mu,\vec{v},b)$-equicontinuous.
 \end{itemize}
 \begin{proof}
(b) $\Rightarrow$ (a). We assume that $T$ is $(\mu,\vec{v},b)$-equicontinuous. For any $\epsilon>0$, there exists a $(\vec{v},b)$-equicontinuous measurable subset $K$ of $X$ with $\mu(K)>1-\epsilon$. As the measure $\mu$ is regular, we can require the set $K$ to be compact. Now we obtain the result from Theorem \ref{thm1}, since $span_{\mu}^{\vec{v},b}(k,\epsilon)\leq span_K^{\vec{v},b}(k,\epsilon).$

(a) $\Rightarrow$ (b). For any $\tau>0$, we need to prove that there exists a  measurable subset $(\vec{v},b)$-equicontinuous set $K$ of $X$ with $\mu(K)>1-\tau$. Fix $\tau>0$. As $\mu$ has bounded complexity with respect to $\{d_k^{\vec{v},b}\}$, for any $M>0$, there exists  $C_M>0$ such that for every $k\in \mathbb{N}$, there exists a subset of $F_k$ of $X$ with $\#(F_k)\leq C_M$ such that
$$\mu\left(\bigcup_{x\in F_k}B_{d_k^{\vec{v},b}}\left(x,1/M\right)\right)>1-\frac{\tau}{2^{M+2}}.$$
Since the measure $\mu$ is regular, we can pick a compact subset $K_k$ of $\bigcup_{x\in F_k}B_{d_k^{\vec{v},b}}(x,1/M)$ with $\mu(K_k)>1-\frac{\tau}{2^{M+2}}$ for each $k\in\mathbb{N}$. Without loss of generality, we can assume that $\lim\limits_{k\to \infty}F_k=F_M$ and $\lim\limits_{k\to \infty}K_k=K_M$ in the Hausdorff metric. Then $\#(F_M)\leq C_M$. As $K_k$ is closed,
$$\mu(K_M)\geq\limsup_{k\to \infty}\mu(K_k)\geq 1-\frac{\tau}{2^{M+2}}.$$
For any $x\in K_M$ and $k\in \mathbb{N}$, there exists  $N>0$ such that for any $l>N$, there exist $x_l\in K_l$ and $y_l\in F_l$ such that $d_k^{\vec{v},b}(x,x_l)<1/M$ and $d_l^{\vec{v},b}(x_l,y_l)<1/M$. Passing to a subsequence if necessary, we may assume that $\lim\limits_{l\to \infty}y_l=y$. Then $y\in F_M$. By the monotonicity of $\{d_k^{\vec{v},b}\}$, for any $l>k$, we have
\begin{align*}
d_k^{\vec{v},b}(x,y_l)\leq& d_k^{\vec{v},b}(x,x_l)+d_k^{\vec{v},b}(x_l,y_l)\\
\leq& d_k^{\vec{v},b}(x,x_l)+d_l^{\vec{v},b}(x_l,y_l)
\leq 2/M.
\end{align*}
Letting  $l\to \infty$, we have $d_k^{\vec{v},b}(x,y)\leq2/M$. Then we obtain that for each $M\in\mathbb{N}$
$$K_M\subset \bigcup_{x\in F_M}B_{d_k^{\vec{v},b}}\left(x,3/M\right),$$
and hence
$$span_{K_M}^{\vec{v},b}\left(k,3/M\right)\leq \#(F_M)\leq C_M.$$
Let $K=\bigcap_{M=1}^{\infty}K_M$. Then $\mu(K)>1-\tau$ and for any $M\in \mathbb{N}$,
\begin{equation}\label{key}
span_{K}^{\vec{v},b}\left(k,3/M\right)\leq span_{K_M}^{\vec{v},b}\left(k,3/M\right)\leq C_M
\end{equation}
for all $k\in \mathbb{N}$. Now, by \eqref{key} and Theorem \ref{thm1}, we know that $K$ is $(\vec{v},b)$-equicontinuous. As $\tau$ is arbitrart, $T$ is $(\mu,\vec{v},b)$-equicontinuous.
We finish the proof of Theorem \ref{thm3}.
 \end{proof}
\end{thm}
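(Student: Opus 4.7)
The plan is to prove the equivalence by leveraging Theorem \ref{thm1}, which already supplies the bounded-topological-complexity $\Leftrightarrow$ equicontinuity correspondence on compact subsets. The direction (b) $\Rightarrow$ (a) should be essentially cosmetic: if $T$ is $(\mu,\vec{v},b)$-equicontinuous, then for any $\epsilon>0$ I take a $(\vec{v},b)$-equicontinuous measurable set with $\mu$-measure $>1-\epsilon$, shrink it to a compact set $K$ of the same property via inner regularity, and apply Theorem \ref{thm1} to get a uniform spanning constant $C(\epsilon)$. Any $(k,\epsilon)$-spanning set for $K$ then covers $K$ by $d_k^{\vec{v},b}$-balls whose union has $\mu$-measure $>1-\epsilon$, so $span_{\mu}^{\vec{v},b}(k,\epsilon)\leq C(\epsilon)$ uniformly in $k$.

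For (a) $\Rightarrow$ (b), the strategy is to produce, for each $\tau>0$, a measurable $(\vec{v},b)$-equicontinuous set $K$ with $\mu(K)>1-\tau$. Fix $\tau$; for each $M\in\mathbb{N}$ I would invoke bounded measure complexity at scale $1/M$ with error $\tau/2^{M+2}$ to obtain, for every $k$, a spanning set $F_k\subset X$ with $\#(F_k)\leq C_M:=C(1/M)$ such that the union of $d_k^{\vec{v},b}$-balls of radius $1/M$ around $F_k$ has $\mu$-measure $>1-\tau/2^{M+2}$. Inner regularity then lets me replace this union by a compact subset $K_k$ of comparable measure. Since $\{A\in K(X):\#(A)\leq C_M\}$ is closed in the Hausdorff metric and $K(X)$ is compact, I can pass to a common subsequence along which $F_k\to F_M$ and $K_k\to K_M$, retaining $\#(F_M)\leq C_M$ and, via a reverse-Fatou argument applied to the set-theoretic $\limsup$ of the $K_k$, $\mu(K_M)\geq 1-\tau/2^{M+2}$.

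The crucial transfer step will be to show that $F_M$ is close to spanning $K_M$ at every scale $k$. Given $x\in K_M$ and $k$ fixed, I approximate $x$ by $x_l\in K_l$, select $y_l\in F_l$ with $d_l^{\vec{v},b}(x_l,y_l)<1/M$, and extract a limit $y\in F_M$. The monotonicity $d_k^{\vec{v},b}\leq d_l^{\vec{v},b}$ for $k\leq l$, which is immediate from $\Lambda_k^{\vec{v}}(b)\subset\Lambda_l^{\vec{v}}(b)$, combined with continuity of $d_k^{\vec{v},b}$ in both arguments, then yields $d_k^{\vec{v},b}(x,y)\leq 2/M$; hence $span_{K_M}^{\vec{v},b}(k,3/M)\leq C_M$ uniformly in $k$. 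Finally, $K:=\bigcap_{M\geq 1}K_M$ has $\mu(K)>1-\tau$, and for every $M$ the inequality $span_{K}^{\vec{v},b}(k,3/M)\leq C_M$ persists for all $k$. Theorem \ref{thm1} then identifies $K$ as $(\vec{v},b)$-equicontinuous, and since $\tau$ is arbitrary this establishes $(\mu,\vec{v},b)$-equicontinuity of $T$.

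The main obstacle I foresee is the double limit embedded in this construction: one first takes a Hausdorff limit in $k$ to manufacture $F_M,K_M$ (losing nothing in measure thanks to the reverse-Fatou step), and then, inside the spanning argument for $K_M$, takes a second limit in $l$ while exchanging the metrics $d_k^{\vec{v},b}$ and $d_l^{\vec{v},b}$. Monotonicity of $\{d_k^{\vec{v},b}\}$ is precisely what makes this exchange legitimate; without it the key estimate $d_k^{\vec{v},b}(x_l,y_l)\leq d_l^{\vec{v},b}(x_l,y_l)$ would break down. The same template should extend verbatim to $\{\hat{d}_k^{\vec{v},b}\}$ (the sequence is again monotone), which is in line with the parallel result and with the remarks preceding this theorem warning that $\{\bar{d}_k^{\vec{v},b}\}$ lacks this monotonicity and so requires a different treatment.
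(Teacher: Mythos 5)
Your proposal is correct and follows essentially the same route as the paper's proof: both directions hinge on Theorem \ref{thm1}, with the (a) $\Rightarrow$ (b) direction built from the same spanning sets $F_k$, compact subsets $K_k$, Hausdorff limits $F_M, K_M$, the monotonicity estimate $d_k^{\vec{v},b}(x_l,y_l)\leq d_l^{\vec{v},b}(x_l,y_l)$, and the final intersection $K=\bigcap_M K_M$. No substantive differences to report.
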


 \subsection{Measure-theoretic complexity with respect to $\{\hat{d}_k^{\vec{v},b}\}$} Let $(X,T)$ be a  $\mathbb{Z}^2$-t.d.s. with a metric $d$ and $\mu\in M(X,T)$. Let   $\vec{v}=(1,\beta)\in\mathbb{R}^2$ be a direction vector and $b\in (0,\infty)$. For $k\in \mathbb{N}$ and $\epsilon>0$,  let
 $$\widehat{span}_{\mu}^{\vec{v},b}(k,\epsilon)=\min\left\{\#(F):F\subset X\text{ and } \mu\left(\bigcup_{x\in F}B_{\hat{d}_k^{\vec{v},b}}(x,\epsilon)\right)>1-\epsilon\right\}.$$
 We say that $\mu$ has bounded complexity with respect to $\{\hat{d}_k^{\vec{v},b}\}$ if for any $\epsilon>0$, there exists a positive integer $C=C(\epsilon)$ such that $\widehat{span}_{\mu}^{\vec{v},b}(k,\epsilon)\leq C$ for all $k\in \mathbb{N}$.
 
We say that $T$ is $(\mu,\vec{v},b)$-equicontinuous in the mean if for any $\tau>0$, there exists a measurable subset $K$ of $X$ with $\mu(K)>1-\tau$ that is $(\vec{v},b)$-equicontinuous in the mean, and $(\mu,\vec{v},b)$-mean equicontinuous if for any $\tau>0$, there exists a measurable subset $K$ of $X$ with $\mu(K)>1-\tau$ that is $(\vec{v},b)$-mean equicontinuous.

The following result implies that the relation between $\mu\in M(X,T)$ with bounded complexity with respect to $\{\hat{d}_k^{\vec{v},b}\}$ and two kinds of  $(\mu,\vec{v},b)$-equicontinuity metioned above. 
 \begin{thm}\label{thm4}
Let $(X,T)$ be a  $\mathbb{Z}^2$-t.d.s., $\mu\in M(X,T)$ and $\vec{v}=(1,\beta)\in\mathbb{R}^2$ be a direction vector. Then for any $b\in (0,\infty)$ the following statements are equivalent.
\begin{itemize}
\item[(a)]$\mu$ has bounded complexity with respect to $\{\hat{d}_k^{\vec{v},b}\}$.
\item[(b)]$T$ is $(\mu,\vec{v},b)$-equicontinuous in the mean.
\item[(c)]$T$ is $(\mu,\vec{v},b)$-mean equicontinuous.
\end{itemize}

\begin{proof}
(a) $\Rightarrow$ (b). Following the proof of Theorem \ref{thm3}, we know that for a given $\tau>0$, there exists a compact subset $K$ such that $\mu(K)\geq 1-\tau$ and for any $M\geq 1$, $\widehat{span}_K^{\vec{v},b}(k,6/M)\leq C_M$ for all $k\in \mathbb{N}$. It follows from Theorem \ref{thm2} that $K$ is $(\vec{v},b)$-equicontinuous in the mean. This proves that $T$ is $(\mu,\vec{v},b)$-equicontinuous in the mean.

(b) $\Rightarrow$ (c). It is directly from definitions.

(c) $\Rightarrow$ (a). Now assume that $T$ is $(\mu,\vec{v},b)$-mean equicontinuous. For any $\epsilon>0$, there exists a compact subset $K$ of $X$ such that $\mu(K)>1-2\epsilon$ and $K$ is $(\vec{v},b)$-mean equicontinuous. Namely, there exists $\delta>0$ such that  $$\limsup_{k\to \infty}\frac{1}{\#(\Lambda_k^{\vec{v}}(b) )}\sum_{(m,n)\in\Lambda_k^{\vec{v}}(b)}d(T^{(m,n)}x,T^{(m,n)}y)<\epsilon/4$$
for all $x,y\in K$ with $d(x,y)<\delta$. As $K$ is compact, there is a finite subset $F$ of $K$ such that $K\subset \bigcup_{x\in F}B_d(x,\delta)$. Enumerate $F$ as $\{x_1,\ldots,x_m\}$. For $j\in\{1,2,\ldots,m\}$ and $N\in\mathbb{N}$, let
\begin{equation*}
\begin{split}
A_N(x_j)=\{y\in B_d(x_j,\delta)\cap K:\frac{1}{\#(\Lambda_k^{\vec{v}}(b))}\sum_{(m,n)\in\Lambda_k^{\vec{v}}(b)}d(T^{(m,n)}x_j,T^{(m,n)}y)
<\epsilon/2, k\geq N \}.
\end{split}
\end{equation*}
It is easy to see that for each $j\in\{1,2,\ldots,m\}$,  $\{A_N(x_j)\}_{N=1}^{\infty}$ is an increasing sequence and $B_d(x_j,\delta)\cap K=\bigcup_{N=1}^{\infty}A_N(x_j)$. Choose $N_1\in \mathbb{N}$ and a compact subset $K_1$ of $A_{N_1}(x_1) $ such that $\mu(K_1)>\mu(B_d(x_1,\delta)\cap K)-\epsilon/2^m$.
Choose $N_2\in \mathbb{N}$ and a compact subset $K_2$ of $A_{N_2}(x_2) $ such that $K_1\cap K_2=\emptyset$ and $\mu(K_1 \cup K_2)>\mu((B_d(x_1,\delta)\cup B_d(x_2,\delta))\cap K)-\epsilon/2^{m-1}$. We can choose compact subsets $K_j$ of $A_{N_j}(x_j)$  for each $j\in\{1,2,\ldots,m\}$  inductively such that $\mu(\bigcup_{j=1}^m K_j)>\mu(K)-\epsilon/2>1-\epsilon$   and $K_i\cap K_j=\emptyset$ for any $1\leq i<j\leq m$.

Let $K_0=\bigcup_{j=1}^m K_j$ and $N_0=\max_{1\leq j\leq m}\{N_j\}$. Then we can choose  $\delta_1>0$ such that for every $x,y\in K_0$ with $d(x,y)<\delta_1$, there exists $j\in\{1,2,\ldots,m\}$ with $x,y\in K_j$. By the continuity of $T$, there exists $\delta_2>0$ such that $d_{N_0}^{\vec{v},b}(x,y)<\epsilon$ for all $x,y\in X$ with $d(x,y)<\delta_2$. Put $\delta_3=\min\{\delta_1,\delta_2\}$. By the compactness of $K_0$, there exists a finite subset $H$ of $K_0$ such that $K_0\subset\bigcup_{x\in H}B_d(x,\delta_3)$. Given any $y\in K_0$, we will show that there exists $x\in H$ such that $y\in B_{\hat{d}^{\vec{v},b}_k}(x,\epsilon)$ for all $k\in\mathbb{N}$. If $k<N_0$, then there exists $x\in H$ such that $y\in B_d(x,\delta_3)$ and hence $\hat{d}^{\vec{v},b}_k(x,y)\leq d_{N_0}^{\vec{v},b}(x,y)<\epsilon$. If $k\geq N_0$, then there exist $x\in H$ and $j\in\{1,2,\ldots,m\}$ such that $x,y \in K_j\subset A_{N_j}(x_j)$. According to the construction of $A_{N_j}(x_j)$ and $k\geq N_j$,
\begin{align*}
&\hat{d}^{\vec{v},b}_k(x,y)=\frac{1}{\#(\Lambda_k^{\vec{v}}(b))}\sum_{(m,n)\in\Lambda_k^{\vec{v}}(b)}d(T^{(m,n)}x,T^{(m,n)}y)\\
\leq& \frac{1}{\#(\Lambda_k^{\vec{v}}(b))}\sum_{(m,n)\in\Lambda_k^{\vec{v}}(b)}d(T^{(m,n)}x,T^{(m,n)}x_j)
+\frac{1}{\#(\Lambda_k^{\vec{v}}(b))}\sum_{(m,n)\in\Lambda_k^{\vec{v}}(b)}d(T^{(m,n)}x_j,T^{(m,n)}y)\\
<&\epsilon/2+\epsilon/2=\epsilon.
\end{align*}
So  for any $k\in \mathbb{N}$, there exists $x\in H$ such that  $\hat{d}^{\vec{v},b}_k(x,y)<\epsilon$. As $y\in K_0$ is arbitrary, $$K_0\subset \bigcup_{x\in H}B_{\hat{d}^{\vec{v},b}_k}(x,\epsilon).$$
This implies that for each $k\in\mathbb{N}$,
$$\mu\left(\bigcup_{x\in H}B_{\hat{d}^{\vec{v},b}_k}(x,\epsilon)\right)\geq \mu(K_0)>1-\epsilon.$$
Thus $\widehat{span}_{\mu}^{\vec{v},b}(k,\epsilon)\leq \#(H)$ for all $k\in \mathbb{N}$, which shows that $\mu$ has bounded complexity with respect to $\{\hat{d}^{\vec{v},b}_k\}$.
This finishes the proof of Theorem \ref{thm4}.
\end{proof}
 \end{thm}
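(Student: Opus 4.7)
The plan is to establish the three-way equivalence as a triangle: prove (a) $\Rightarrow$ (b), observe (b) $\Rightarrow$ (c) immediately from the definitions, and then push through the substantive direction (c) $\Rightarrow$ (a). The implication (b) $\Rightarrow$ (c) is automatic since $(\vec{v},b)$-equicontinuity in the mean forces $\hat{d}_k^{\vec{v},b}(x,y)<\epsilon$ uniformly in $k$, which in particular yields $\limsup_{k\to\infty}\bar{d}_k^{\vec{v},b}(x,y)\le \epsilon$, the defining inequality of $(\vec{v},b)$-mean equicontinuity.

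For (a) $\Rightarrow$ (b), I would rerun the argument of Theorem \ref{thm3} verbatim with $\hat{d}_k^{\vec{v},b}$ replacing $d_k^{\vec{v},b}$. The only structural feature of $\{d_k^{\vec{v},b}\}$ invoked in that proof is monotonicity in $k$, and $\{\hat{d}_k^{\vec{v},b}\}$ enjoys the same monotonicity since $\hat{d}_k^{\vec{v},b}$ is itself a maximum over $1\le i\le k$. That procedure produces, for each $\tau>0$, a compact subset $K$ with $\mu(K)>1-\tau$ satisfying $\widehat{span}_K^{\vec{v},b}(k,3/M)\le C_M$ for every $M,k\in\mathbb{N}$. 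Invoking Theorem \ref{thm2} on $K$ then gives that $K$ is $(\vec{v},b)$-equicontinuous in the mean, which establishes (b).

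The bulk of the work lies in (c) $\Rightarrow$ (a). Fix $\epsilon>0$ and, using $(\mu,\vec{v},b)$-mean equicontinuity, extract a compact $(\vec{v},b)$-mean equicontinuous set $K$ with $\mu(K)>1-2\epsilon$ together with the associated $\delta>0$ from the definition. The obstacle is that mean equicontinuity furnishes only the asymptotic bound $\limsup_k\bar{d}_k^{\vec{v},b}(x,y)<\epsilon/4$, whereas bounded complexity with respect to $\{\hat{d}_k^{\vec{v},b}\}$ demands a single finite spanning set that controls the max-mean distance for every $k$ simultaneously. To bridge this gap I would cover $K$ by finitely many balls $B_d(x_j,\delta)$, $j=1,\dots,m$, and introduce the truncated sets
\begin{equation*}
A_N(x_j)=\Big\{y\in B_d(x_j,\delta)\cap K:\tfrac{1}{\#(\Lambda_k^{\vec{v}}(b))}\sum_{(m,n)\in\Lambda_k^{\vec{v}}(b)}d(T^{(m,n)}x_j,T^{(m,n)}y)<\tfrac{\epsilon}{2}\text{ for all }k\ge N\Big\}.
\end{equation*}
Since $A_N(x_j)\nearrow B_d(x_j,\delta)\cap K$ as $N\to\infty$, I select, by inner regularity, indices $N_j$ and pairwise disjoint compact subsets $K_j\subset A_{N_j}(x_j)$ whose union $K_0=\bigcup_j K_j$ satisfies $\mu(K_0)>1-\epsilon$. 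Setting $N_0=\max_j N_j$, continuity of the finitely many maps $\{T^{(m,n)}:(m,n)\in\Lambda_{N_0}^{\vec{v}}(b)\}$ yields $\delta_2>0$ controlling $d_{N_0}^{\vec{v},b}$ on nearby points, while positive separation of the $K_j$ produces $\delta_1>0$ forcing any two nearby points in $K_0$ to lie in a common $K_j$. A finite cover $H$ of $K_0$ by $d$-balls of radius $\min(\delta_1,\delta_2)$ then serves as a universal spanning set: for $y\in K_0$ and a suitable $x\in H$ in the same $K_j$, a triangle-inequality split through $x_j$ combined with membership in $A_{N_j}(x_j)$ handles $k\ge N_0$, while the continuity estimate handles $k<N_0$, so $\hat{d}_k^{\vec{v},b}(x,y)<\epsilon$ uniformly in $k$.

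The main obstacle is exactly this stitching of regimes: the pre-asymptotic range $k<N_0$ (where only joint continuity of $T$ on the finite set $\Lambda_{N_0}^{\vec{v}}(b)$ is available) must be glued to the asymptotic range $k\ge N_0$ (where mean equicontinuity kicks in) using a \emph{single} finite spanning set $H$. Egorov-style truncation via the $A_N(x_j)$, combined with inner regularity of $\mu$ and the disjointness trick forcing nearby points into one $K_j$, is what allows this synthesis without sacrificing the measure bound $\mu(K_0)>1-\epsilon$.
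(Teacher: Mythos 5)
Your proposal is correct and follows essentially the same route as the paper: the implication (a) $\Rightarrow$ (b) by rerunning the Theorem \ref{thm3} argument (using monotonicity of $\{\hat{d}_k^{\vec{v},b}\}$) and invoking Theorem \ref{thm2}, and (c) $\Rightarrow$ (a) via the same Egorov-style sets $A_N(x_j)$, disjoint compact pieces $K_j$, the separation/continuity constants $\delta_1,\delta_2$, and the two-regime case split at $N_0$. No substantive differences to report.
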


 \subsection{Measure-theoretic complexity with respect to $\{\bar{d}_k^{\vec{v},b}\}$}Let $(X,T)$ be a  $\mathbb{Z}^2$-t.d.s. with a metric $d$ and $\mu\in M(X,T)$. Let   $\vec{v}=(1,\beta)\in\mathbb{R}^2$ be a direction vector and $b\in (0,\infty)$. For $k\in \mathbb{N}$ and $\epsilon>0$,  let
 $$\overline{span}_{\mu}^{\vec{v},b}(k,\epsilon)=\min\left\{\#(F):F\subset X\text{ and } \mu\left(\bigcup_{x\in F}B_{\bar{d}_k^{\vec{v},b}}(x,\epsilon)\right)>1-\epsilon\right\}.$$
 We say that $\mu$ has bounded complexity with respect to $\{\bar{d}_k^{\vec{v},b}\}$ if for any $\epsilon>0$, there exists a positive integer $C=C(\epsilon)$ such that $\overline{span}_{\mu}^{\vec{v},b}(k,\epsilon)\leq C$ for all $k\in \mathbb{N}$.

Now we begin to establish the relation between directional discrete spectrum and directional bounded complexity.
First we recall a $\mathbb{Z}$-m.p.s., which was introduced by Park \cite{P} as follows.
 Let $\widetilde{X}=X\times [0,1)^2$, $\widetilde{\mu}=\mu\times m$, $\widetilde{\mathcal{B}}=\mathcal{B}_X\times \mathcal{C}$, where $\mathcal{C}$ is the Borel  $\sigma$-algebra on $[0,1)^2$ and $m$ is the Lebesgue measure on $[0,1)^2$. Let $\phi_{s,t}(x,u,v)=(T^{([s+u],[t+v])}x,\{s+u\},\{t+v\})$, where $[a]$ is the integer part of $a$ and $\{a\}$ is the decimal part of $a$. Write $\phi_{n,n\beta}$ as $W^n$ for all $n\in\mathbb{Z}$. Then we get the $\mathbb{Z}$-m.p.s. $(\widetilde{X},\widetilde{\mathcal{B}},\widetilde{\mu},W).$ Let $\mathcal{K_{\widetilde{\mu}}}$ be the Kronecker algebra of $(\widetilde{X},\widetilde{\mathcal{B}},\widetilde{\mu},W).$ We can restate \cite[Lemma 3.4, Step 1]{LX} as follows.
 \begin{lem}\label{lem2}
$\widetilde{\mu}$ has discrete spectrum if and only if $\mu$  has $\vec{v}$-discrete spectrum.
 \end{lem}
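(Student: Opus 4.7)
The plan is to reduce the equivalence to a skew product over a rotation and then exploit the explicit form of the iterates of $W$ to pass between the two Kronecker algebras. First observe that $W$ acts trivially on the $u$-coordinate: since $u\in[0,1)$ we have $[1+u]=1$ and $\{1+u\}=u$, so $W(x,u,v)=(T^{(1,[\beta+v])}x,\,u,\,\{\beta+v\})$. Writing $\overline{X}=X\times[0,1)_{v}$, $\overline{\mu}=\mu\times m_{v}$ and $\overline{W}(x,v)=(T^{(1,[\beta+v])}x,\{\beta+v\})$, the factorization $L^{2}(\widetilde{\mu})\cong L^{2}(\overline{\mu})\otimes L^{2}([0,1)_{u})$ intertwines $U_{W}$ with $U_{\overline{W}}\otimes\mathrm{Id}$; hence $\widetilde{\mu}$ has discrete spectrum if and only if $\overline{\mu}$ does, because tensoring an orthonormal basis of $U_{\overline{W}}$-eigenvectors with an ONB of $L^{2}([0,1)_{u})$ produces an ONB of $U_{W}$-eigenvectors. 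A routine induction gives $\overline{W}^{n}(x,v)=(T^{(n,[n\beta+v])}x,\{n\beta+v\})$, and the visited lattice points $\{(n,[n\beta]),(n,[n\beta]+1):n\in\mathbb{Z}\}$ cover $\Lambda^{\vec{v}}(1/2)$ while staying inside $\Lambda^{\vec{v}}(1)$.

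For the forward direction, assume $\overline{\mu}$ has discrete spectrum. Given $B\in\mathcal{B}_{X}$, set $\widetilde{B}=B\times[0,1)$ and $\alpha_{n}=\{n\beta\}$. Then
\[U_{\overline{W}}^{n}1_{\widetilde{B}}(x,v)=(U_{T}^{(n,[n\beta])}1_{B})(x)\,1_{[0,1-\alpha_{n})}(v)+(U_{T}^{(n,[n\beta]+1)}1_{B})(x)\,1_{[1-\alpha_{n},1)}(v).\]
For any sequence $n_{k}$, precompactness of $\{U_{\overline{W}}^{n}1_{\widetilde{B}}\}$ in $L^{2}(\overline{\mu})$ together with compactness of $[0,1]$ lets me extract a subsequence along which $\alpha_{n_{k}}\to\alpha^{*}$ and $U_{\overline{W}}^{n_{k}}1_{\widetilde{B}}\to G$ in $L^{2}(\overline{\mu})$. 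If $\alpha^{*}<1$, restricting the $L^{2}$-convergence to any slab $\{v\in[0,1-\alpha^{*}-\varepsilon)\}$, on which the integrand is $v$-independent and equals $U_{T}^{(n_{k},[n_{k}\beta])}1_{B}$ for large $k$, forces $\{U_{T}^{(n_{k},[n_{k}\beta])}1_{B}\}$ to be Cauchy in $L^{2}(\mu)$; symmetrically, $\alpha^{*}>0$ yields convergence of $\{U_{T}^{(n_{k},[n_{k}\beta]+1)}1_{B}\}$. The edge cases $\alpha^{*}\in\{0,1\}$ will be handled by noting that $(n,[n\beta]+1)=(n,[n\beta])+(0,1)$, so the isometry $U_{T}^{(0,1)}$ transfers precompactness from either row to the other. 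Both orbits being precompact, $\{U_{T}^{(m,k)}1_{B}:(m,k)\in\Lambda^{\vec{v}}(1/2)\}$ is precompact in $L^{2}(\mu)$, and the $b$-independence of $\mathcal{K}_{\mu}^{\vec{v}}$ gives $B\in\mathcal{K}_{\mu}^{\vec{v}}$.

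For the converse, by density in $L^{2}(\overline{\mu})$ it suffices to show that $f\otimes e_{q}$ is almost periodic for $U_{\overline{W}}$ whenever $f\in L^{2}(\mu)$ and $e_{q}(v)=e^{2\pi iqv}$, $q\in\mathbb{Z}$. Using $e^{2\pi iq\{n\beta+v\}}=e^{2\pi iq(n\beta+v)}$, a direct computation gives
\[U_{\overline{W}}^{n}(f\otimes e_{q})(x,v)=e^{2\pi iqn\beta}e^{2\pi iqv}\bigl[(U_{T}^{(n,[n\beta])}f)(x)\,1_{[0,1-\alpha_{n})}(v)+(U_{T}^{(n,[n\beta]+1)}f)(x)\,1_{[1-\alpha_{n},1)}(v)\bigr].\]
Given a sequence $n_{k}$, the hypothesis that $\mu$ has $\vec{v}$-discrete spectrum makes $\{U_{T}^{(n_{k},[n_{k}\beta])}f\}$ and $\{U_{T}^{(n_{k},[n_{k}\beta]+1)}f\}$ precompact in $L^{2}(\mu)$; passing to a further subsequence so that these converge, $\alpha_{n_{k}}\to\alpha^{*}$, and $e^{2\pi iqn_{k}\beta}\to c$ with $|c|=1$, the right-hand side converges in $L^{2}(\overline{\mu})$. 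Hence $\{U_{\overline{W}}^{n}(f\otimes e_{q})\}$ is precompact, so $\overline{\mu}$ has discrete spectrum.

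The main technical obstacle I anticipate is keeping book on the two jump values of $[n\beta+v]$ and on the boundary limits $\alpha^{*}\in\{0,1\}$, where one slice in the decomposition of $U_{\overline{W}}^{n}1_{\widetilde{B}}$ collapses and the $\overline{W}$-convergence only directly controls one of the two rows along the strip. The decisive simplifying observation is that the two rows visited by $\overline{W}$ differ by a fixed lattice shift $(0,1)$, so precompactness of either row is automatically inherited by the other through the unitary $U_{T}^{(0,1)}$; this avoids any more delicate slicing or equidistribution argument.
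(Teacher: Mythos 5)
Your proposal is essentially correct, and I verified the key steps: the identity $[n\beta+v]=[n\beta]+[\{n\beta\}+v]$ does split $U_{\overline W}^n$ applied to $1_B\otimes 1$ (resp.\ $f\otimes e_q$) into the two rows $(n,[n\beta])$ and $(n,[n\beta]+1)$ with threshold $v=1-\{n\beta\}$; the slab restriction turns $L^2(\overline\mu)$-Cauchyness into $L^2(\mu)$-Cauchyness of the row that survives in the limit $\alpha^*$; the boundary cases $\alpha^*\in\{0,1\}$ are indeed rescued by the unitary $U_T^{(0,1)}$; the two rows lie in $\Lambda^{\vec v}(1)$ and cover $\Lambda^{\vec v}(1/2)$; and in the converse the two-row expression converges in $L^2(\overline\mu)$ because the symmetric differences of the intervals have measure $|\alpha_{n_k}-\alpha^*|\to 0$. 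The main point of comparison is that the paper itself contains no proof of Lemma~\ref{lem2}: it is quoted as a restatement of Lemma 3.4, Step 1 of \cite{LX}, so your argument is a self-contained substitute for that external proof rather than a variant of anything proved here; it does, however, still borrow from the paper (hence from \cite{LX}) the identification $\mathcal{A}_c^{\vec v}(b)=L^2(X,\mathcal{K}_\mu^{\vec v}(b),\mu)$ (needed to upgrade from indicators to general $f$ in the converse) and the $b$-independence of $\mathcal{K}_\mu^{\vec v}(b)$ (needed to pass between $b=1/2$, $b=1$ and general $b$). Three small points you should spell out in a final write-up: the implication from $\widetilde\mu$ to $\overline\mu$ is the factor/embedding direction (via the equivariant isometry $g\mapsto g\otimes 1$), not the tensor-basis construction you cite, which only gives the other implication; the reduction to functions of the form $f\otimes e_q$ in the converse uses that the almost periodic functions form a closed linear subspace of $L^2(\overline\mu)$; and the precompactness conclusions should be phrased for two-sided orbits ($n\in\mathbb{Z}$), which your formulas do cover since $W^n=\phi_{n,n\beta}$ for all $n\in\mathbb{Z}$. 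What your route buys is an explicit, elementary passage between the $W$-orbit and the lattice orbit along the strip, avoiding any ergodicity or equidistribution input, which is exactly in the spirit of why the authors introduced $(\widetilde X,\widetilde{\mathcal B},\widetilde\mu,W)$ in the first place.
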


We also restate \cite[Theorem 4.7]{H1} and \cite[Proposition 4.1]{HWY} as follows.
\begin{prop}\label{prop2}
Let $(X,T)$ be a $\mathbb{Z}$-t.d.s. with a metric $d$ and $\mu\in M(X,T)$. Then $\mu$ has discrete spectrum if and only if it has bounded complexity with respect to $\{\bar{d}_k\}$.
\end{prop}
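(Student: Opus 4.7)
The plan is to establish the two directions of the equivalence by bridging through $\mu$-mean equicontinuity (the $\mathbb{Z}$-action analogue of the notion used in Theorem \ref{thm4}).

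For the forward direction, I would invoke the Halmos--von Neumann structure theorem to obtain a measure-theoretic isomorphism $\pi:(X,\mathcal{B}_X,\mu,T)\to(G,\mathcal{B}_G,m_G,R_a)$ with a rotation on a compact metric abelian group, passing through an ergodic decomposition if $\mu$ is not ergodic (and gluing the uniform covers over ergodic components via Rokhlin disintegration). Fix a translation-invariant metric $\rho$ on $G$; then $R_a$ is a $\rho$-isometry, so $\bar{d}^{\rho}_k(g,h)=\rho(g,h)$ for every $k$, and any finite $\epsilon$-net in $(G,\rho)$ yields a uniform $\epsilon$-cover that works simultaneously for all $k$. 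To return to the original metric $d$ on $X$, I would apply Lusin's theorem: for any $\tau>0$ pick a compact $K\subset X$ with $\mu(K)>1-\tau$ on which $\pi$ is continuous. Uniform continuity of $\pi|_K$ then transports the $\rho$-cover of $G$ into a $d$-cover of $K$ that witnesses the required bound on $\overline{span}_{\mu}(k,\epsilon)$, uniformly in $k$.

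For the converse, I would proceed in two steps. Step A: show that bounded complexity with respect to $\{\bar{d}_k\}$ implies $T$ is $\mu$-mean equicontinuous, by an argument parallel to (c)$\Rightarrow$(a) of Theorem \ref{thm4}. Given uniform covers $\{F_k\}$ with $\#(F_k)\leq C_\epsilon$, pass to a Hausdorff limit $F=\{z_1,\ldots,z_r\}$ in $K(X)$ and form the measurable sets $K_t=\bigcap_{k}\{x:\bar{d}_k(x,z_t)\leq\epsilon\}$; a triangle-inequality application to $\bar{d}_k$ shows $\bigcup_t K_t$ is mean-equicontinuous with measure close to $1$. (Here the absence of monotonicity of $\{\bar{d}_k\}$, which was an obstruction in Section 3.3, is not fatal because the sets $K_t$ are still closed by continuity of each $\bar{d}_k$.) Step B: show $\mu$-mean equicontinuity forces discrete spectrum. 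For each $B\in\mathcal{B}_X$, approximate $1_B$ in $L^2(\mu)$ by a continuous $f$ concentrated near a mean-equicontinuous compact set $K$ of large measure; partition $K$ into finitely many $d$-balls of small radius and use the defining inequality $\limsup_k\bar{d}_k(x,y)<\eta$ for $x,y$ in the same piece to bound $\|U_T^n 1_B-U_T^m 1_B\|_{L^2(\mu)}$ for $n,m$ in arithmetic progressions, yielding total boundedness of $\{U_T^n 1_B\}_{n\in\mathbb{Z}}$ and hence $B\in\mathcal{K}_\mu$.

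The main obstacle is the $L^2$-conversion in Step B: the hypothesis $\bar{d}_k(x,y)<\eta$ is a pointwise time-averaged condition, whereas membership in $\mathcal{K}_\mu$ demands Hilbert-space compactness of orbits. The bridge is a Fubini interchange combined with $T$-invariance of $\mu$ that rewrites $\int_X \frac{1}{k}\sum_{i=0}^{k-1} d(T^i x, T^i y)\,d\mu(x)$ as a time-average of $L^1$-distances between translates, together with a Chebyshev-style selection of a large-measure subset where the pointwise bound holds uniformly. This delicate step, essentially executed in \cite{HWY,H1}, is what I expect to be the technically most demanding part of the proof.
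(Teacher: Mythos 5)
You should first be aware that the paper does not prove Proposition \ref{prop2} at all: it is introduced with the words ``We also restate \cite[Theorem 4.7]{H1} and \cite[Proposition 4.1]{HWY} as follows'' and is used purely as an imported black box. So there is no in-paper argument to compare yours against; what you have written is an outline of how the cited results are themselves established. Your outline captures the right overall strategy (group-rotation model one way, mean equicontinuity as the bridge the other way), but two of its steps have genuine gaps.

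The first gap is the forward direction for non-ergodic $\mu$. Halmos--von Neumann applies to ergodic systems, and your proposed repair --- ergodic decomposition plus ``gluing the uniform covers over ergodic components via Rokhlin disintegration'' --- does not work as stated. Even if each ergodic component $\mu_\omega$ admits an $\epsilon$-cover by $C_\omega(\epsilon)$ balls in $\bar{d}_k$, the \emph{centers} of those balls depend on $\omega$, so the union over the (typically uncountable) family of components is not a finite spanning set for $\mu$, and the counts $C_\omega(\epsilon)$ need not be uniformly bounded in $\omega$. This is exactly the point at which the non-ergodic case required new arguments (Yu \cite{Y}; \cite[Theorem 4.7]{H1}), and it cannot be dispatched in one clause.

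The second gap is Step A of the converse. The Hausdorff-limit argument you want to import from Theorems \ref{thm3} and \ref{thm4} relies essentially on the monotonicity $\rho_k\le\rho_{k+1}$ of the metric sequence: one needs $\bar{d}_{k_l}(x,z_{k_t})\le\bar{d}_{k_t}(x,z_{k_t})<\epsilon$ for $t\ge l$ in order to let $t\to\infty$ with $k_l$ fixed and conclude that a single limit set $F$ works for \emph{every} $k$ simultaneously. The sequence $\{\bar{d}_k\}$ is not monotone, and the paper stresses in Section 3.3 (citing \cite[Propositions 3.8 and 3.9]{H1}) that this is precisely where the $\{\bar{d}_k\}$ case diverges from the others. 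Closedness of the sets $K_t$ is not the issue; the issue is that without monotonicity you cannot show $\mu\bigl(\bigcup_t K_t\bigr)$ is large, since membership in some $K_t$ demands closeness to a fixed center in all of the mutually uncomparable metrics $\bar{d}_k$ at once. The correct route is to first upgrade bounded $\{\bar{d}_k\}$-complexity to bounded $\{\hat{d}_k\}$-complexity via \cite[Theorem 4.4]{H1} (restated as Lemma \ref{lem3} in this paper) --- itself a nontrivial combinatorial argument --- after which the monotone Hausdorff-limit argument of Theorem \ref{thm4} applies. Your Step B is the standard, genuinely delicate, argument and is sketched plausibly, but as it stands the chain from bounded $\{\bar{d}_k\}$-complexity to mean equicontinuity is broken.
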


With the help of Proposition \ref{prop2}, we will prove that for any $\mu\in M(X,T)$, having $\vec{v}$-discrete spectrum and having bounded complexity with respect to $\{\bar{d}_k^{\vec{v},b}\}$ for any $b\in (0,\infty)$ are equivalent.  The proof is divided into two steps, that is, Theorem \ref{thm5} and Theorem \ref{thm6}.
\begin{thm}\label{thm5}
Let $(X,T)$ be a  $\mathbb{Z}^2$-t.d.s., $\mu\in M(X,T)$ and $\vec{v}=(1,\beta)\in\mathbb{R}^2$ be a direction vector. If $\mu$ has $\vec{v}$-discrete spectrum, then it has bounded complexity with respect to $\{\bar{d}_k^{\vec{v},b}\}$ for any $b\in (0,\infty)$.
\end{thm}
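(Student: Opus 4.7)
The plan is to invoke Park's $\mathbb{Z}$-suspension $(\widetilde{X},\widetilde{\mc{B}},\widetilde{\mu},W)$ to leverage the known $\mathbb{Z}$-action result. Since $\mu$ has $\vec{v}$-discrete spectrum, Lemma~\ref{lem2} upgrades this to ordinary discrete spectrum of $\widetilde{\mu}$ for $W$, and Proposition~\ref{prop2} then gives bounded complexity of $\widetilde{\mu}$ with respect to the mean metrics $\{\bar{d}_k^{W}\}$ for any compatible metric on $\widetilde{X}$. The remaining task is to push this property back down to bounded complexity of $\mu$ with respect to $\{\bar{d}_k^{\vec{v},b}\}$ on $X$ for arbitrary $b\in(0,\infty)$.

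\textbf{A comparison of metrics.} I equip $\widetilde{X}=X\times[0,1)^2$ with the metric
\[\widetilde{d}((x,u,v),(x',u',v'))=\frac{1}{2L+1}\sum_{j=-L}^{L}d(T^{(0,j)}x,T^{(0,j)}x')+|u-u'|+|v-v'|_{\circ},\]
where $L=\lceil b\rceil+2$ and $|\cdot|_{\circ}$ is the circle metric on $[0,1)$. A straightforward induction yields $W^n(x,u,v)=(T^{(n,[n\beta+v])}x,u,\{n\beta+v\})$, so for any two points sharing the same fiber coordinates $(u,v)$,
\[\bar{d}_k^W((x,u,v),(x',u,v))=\frac{1}{k(2L+1)}\sum_{n=0}^{k-1}\sum_{j=-L}^{L}d(T^{(n,[n\beta+v]+j)}x,T^{(n,[n\beta+v]+j)}x').\]
Because $|[n\beta+v]-n\beta|<1$ and $L\ge b+1$, the index set $\{(n,[n\beta+v]+j):0\le n\le k-1,\,|j|\le L\}$ covers $\Lambda_k^{\vec{v}}(b)$; hence
\[\bar{d}_k^W((x,u,v),(x',u,v))\ge \frac{\#(\Lambda_k^{\vec{v}}(b))}{k(2L+1)}\,\bar{d}_k^{\vec{v},b}(x,x')\ge c(b)\,\bar{d}_k^{\vec{v},b}(x,x')\]
for a constant $c(b)>0$ independent of $k$.

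\textbf{Fubini transfer and main obstacle.} Fix $\epsilon>0$ and take $\delta=\delta(\epsilon,b)>0$ small. Bounded complexity of $\widetilde{\mu}$ supplies $C=C(\delta)$ and, for each $k$, a set $\widetilde{F}_k=\{(x_i,u_i,v_i)\}_{i=1}^{C}\subset\widetilde{X}$ with $\widetilde{\mu}\bigl(\bigcup_iB_{\bar{d}_k^W}((x_i,u_i,v_i),\delta)\bigr)>1-\delta$. Applying Fubini to $\widetilde{\mu}=\mu\times m$ singles out a slice $(u_0,v_0)\in[0,1)^2$ whose $X$-section of this union still has $\mu$-measure $>1-\sqrt{\delta}$; on it, each $x$ satisfies $\bar{d}_k^W((x,u_0,v_0),(x_i,u_i,v_i))<\delta$ for some $i$, which forces $|u_0-u_i|,|v_0-v_i|_{\circ}<\delta$ via the $(u,v)$-contribution to $\widetilde{d}$. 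The principal obstacle is that $(u_0,v_0)\neq(u_i,v_i)$ in general, so the aligned comparison above does not apply verbatim. The fix: the floors $[n\beta+v_0]$ and $[n\beta+v_i]$ differ by at most one and disagree precisely when $\{n\beta+v_0\}$ falls into a fixed interval of length $<\delta$, so by equidistribution of $\{n\beta\}$ (for irrational $\beta$) or periodicity (for rational $\beta$) the density of such exceptional $n\in[0,k-1]$ is $O(\delta)$ up to finite-$k$ corrections. Splitting the sum defining $\bar{d}_k^W$ into the aligned rows (bounded via the inequality of the previous paragraph) and the $O(\delta)$-fraction of exceptional rows (bounded trivially by $\mathrm{diam}(X,d)$) yields $\bar{d}_k^{\vec{v},b}(x,x_i)\le c'(b)\,\delta$. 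Choosing $\delta$ small enough relative to $\epsilon$, the projections $\{x_i\}_{i=1}^{C}\subset X$ form an $(\epsilon,\bar{d}_k^{\vec{v},b})$-spanning set of $\mu$-measure $>1-\epsilon$, uniformly in $k$; the finitely many small $k$ are absorbed into $C$. Since the construction works for every $b\in(0,\infty)$ (with constants depending on $b$ but not $k$), the conclusion holds for all $b$.
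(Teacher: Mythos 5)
Your overall route is the same as the paper's: use Park's suspension $(\widetilde{X},\widetilde{\mathcal{B}},\widetilde{\mu},W)$, upgrade $\vec{v}$-discrete spectrum of $\mu$ to discrete spectrum of $\widetilde{\mu}$ via Lemma \ref{lem2}, invoke Proposition \ref{prop2} to get bounded complexity of $\widetilde{\mu}$ for the mean metrics of $W$, and then transfer this down to $\{\bar{d}_k^{\vec{v},b}\}$ by comparing the suspension mean metric with the directional mean metric. Where you differ is in the execution of the transfer, and your version is in some respects cleaner: by building the $b$-dependent average $\frac{1}{2L+1}\sum_{|j|\le L}d(T^{(0,j)}\cdot,T^{(0,j)}\cdot)$ into the metric on $\widetilde{X}$, the whole strip $\Lambda_k^{\vec{v}}(b)$ is covered in one stroke, which replaces the paper's Claim 1 (control on the width-$1$ strip via continuity of $T^{(0,\pm1)}$, then the density estimate $\#\Lambda_k^{\vec{v}}(b)/\#\Lambda_k^{\vec{v}}(1)>b/2$) and eliminates the separate case $b>1$; and your explicit equidistribution/discrepancy treatment of the mismatch $[n\beta+v_0]\neq[n\beta+v_i]$ addresses honestly a point the paper disposes of with an asserted ``fact'' about replacing $(u,v)$ by $(s,t)$ in the ball. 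Two small technical remarks: the lower bound $\#\Lambda_k^{\vec{v}}(b)\ge c(b)k$ behind your constant $c(b)$ only holds for $k$ large (your absorption of finitely many small $k$ covers this), and you should use the circle metric on the second fiber coordinate as well so that $\widetilde{X}$ stays compact; both are harmless.

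There is, however, one genuine flaw: your parenthetical claim that for rational $\beta$ ``periodicity'' gives exceptional-row density $O(\delta)$ is false. If $\beta=p/q$, the orbit $\{n\beta+v_0\}$ visits only $q$ points, and if one of these happens to lie within $|v_0-v_i|_\circ<\delta$ of an integer (e.g.\ $\beta=1/2$ with $v_0$ just below $1/2$ and $v_i$ just above), then the floors $[n\beta+v_0]$ and $[n\beta+v_i]$ disagree for a fixed positive proportion $1/q$ of all $n$, no matter how small $\delta$ is; bounding those rows by $\mathrm{diam}(X)$ then destroys your estimate $\bar{d}_k^{\vec{v},b}(x,x_i)\le c'(b)\delta$. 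The exceptional rows cannot be rescued by realignment either, since for them the suspension metric compares $T^{(n,a+j)}x$ with $T^{(n,a\pm1+j)}x_i$, which does not control the aligned distance. The paper avoids this by excluding $\beta\in\mathbb{Q}$ at the outset and handling that case separately by the $\mathbb{Z}$-action methods (along the direction $(q,p)$ the strip is carried by the single homeomorphism $T^{(q,p)}$, so Proposition \ref{prop2} applies directly); you should do the same, keeping your suspension argument for irrational $\beta$ only, where uniform discrepancy bounds for $\{n\beta\}$ do give the $O(\delta)$ density you use, uniformly in $(v_0,v_i)$.
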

\begin{proof}
Note that if $\beta\in \mathbb{Q}$ the statement can be proved by the same methods for the case of $\mathbb{Z}$-actions. So we assume that $\beta \notin \mathbb{Q}$.	
	
For the $\mathbb{Z}$-m.p.s. $(\widetilde{X},\widetilde{\mathcal{B}},\widetilde{\mu},W),$  we can define a metric $\tilde{d}$, which is compatible with the topology of $\widetilde{X}$, by
$$\tilde{d}((x,s,t),(y,u,v))=\max\{d(x,y),\min\{|s-u|,1-|s-u|\},\min\{|t-v|,1-|t-v|\}\}$$
 for all $(x,s,t),(y,u,v)\in \widetilde{X}$, where $|a|$ is absolute value of $a\in \mathbb{R}.$ As $\mu$ has $\vec{v}$-discrete spectrum, we know that $\widetilde{\mu}$ has discrete spectrum by Lemma \ref{lem2}. Using Proposition \ref{prop2}, we know that $\widetilde{\mu}$ has bounded complexity with respect to $\{\bar{\tilde{d}}_k\}$.
 
 Now we prove that $\mu$ has bounded complexity with respect to $\{\bar{d}_k^{\vec{v},b}\}$ for any $b\in (0,\infty)$ by the fact that $\widetilde{\mu}$ has bounded complexity with respect to $\{\bar{\tilde{d}}_k\}$. Fix $b\in(0,\infty)$.
  Since $X$ is compact, there is  $M\geq 1$ such that $
 diam(X)\leq M,
 $ where $diam(X)=\max\{d(x,y):x,y\in X\}$. As $T$ is continuous, for any $\epsilon\in(0,1)$, there exists  $\delta\in (0,\epsilon^2/(8M))$ such that for any $x,y\in X$ with $d(x,y)<\delta$, we have
\begin{equation}\label{4.1}
d(T^{(0,i)}x,T^{(0,i)}y)<\frac{b\epsilon^2}{16M}
\end{equation}
for each $i\in\{1,0,-1\}$. Since $\widetilde{\mu}$ has bounded complexity with respect to $\{\bar{\tilde{d}}_k\}$, there is a positive integer $C>0$ such that for any $k\in \mathbb{N}$, there exists $\widetilde F_k\subset X$ with $\#(\widetilde F_k)\leq C$ such that
$$\widetilde{\mu}\left(\bigcup_{(x,s,t)\in \widetilde F_k}B_{\bar{\tilde{d}}_k}\left((x,s,t),\frac{b^2\delta^2}{8M}\right)\right)>1-\frac{b^2\delta^2}{8M}.$$
Let $F_k=\{x\in X:\text{there exists } (s,t)\in [0,1)^2 \text{ such that }(x,s,t)\in \widetilde F_k\}$  for each  $k\in\mathbb{N}$. Then $\#(F_k)\leq\#(\widetilde{F}_k)\leq C$ for each $k\in\mathbb{N}$. Note that for any $x,y\in X$ and $(s,t)\in [0,1)^2$  we have
\begin{equation}\label{13}
\begin{split}
\bar{\tilde{d}}_k((x,s,t),(y,s,t))
&=\frac{1}{k}\sum_{i=0}^{k-1}\tilde{d}(W^i(x,s,t),W^i(y,s,t))\\
&\geq \frac{1}{k}\sum_{i=0}^{k-1}d(T^{(i,[i\beta+t])}x,T^{(i,[i\beta+t])}y).
\end{split}	
\end{equation}
 Then we have the following claim.
\begin{claim}\label{123}
Fix $x,y\in X$ and $(s,t)\in [0,1)^2$. Then for any $b\in(0,1]$ there exists $N>0$ such that for any $k\geq N$ if $\bar{\tilde{d}}_k((x,s,t),(y,s,t))\leq\frac{b^2\delta^2}{8M}$, then $$\frac{1}{\#(\Lambda_k^{\vec{v}}(b))}\sum_{(m,n)\in\Lambda_k^{\vec{v}}(b)}d(T^{(m,n)}x,T^{(m,n)}y)<\epsilon.$$
\end{claim} 
 \begin{proof}[Proof of Claim 1]
 Fix $b\in(0,1]$. Let $\mathcal{S}^b_t=\{i\in\mathbb{Z}_+:d(T^{(i,[i\beta+t])}x,T^{(i,[i\beta+t])}y)\geq b\delta\}$.
Combining \eqref{13} and the assumption $\bar{\tilde{d}}_k((x,s,t),(y,s,t))\leq\frac{b^2\delta^2}{8M}$,  we deduce that $$b\delta\frac{\#(\mathcal{S}_t^b\cap [0,k-1])}{\#([0,k-1])}\leq\frac{1}{k}\sum_{i=0}^{k-1}d(T^{(i,[i\beta+t])}x,T^{(i,[i\beta+t])}y)\le \bar{\tilde{d}}_k((x,s,t),(y,s,t))	\leq\frac{b^2\delta^2}{8M}. $$ That is,
 \begin{equation}\label{4.2}
 \frac{\#(\mathcal{S}_t^b\cap [0,k-1])}{\#([0,k-1])}\leq\frac{b\delta}{8M}.
 \end{equation} 
 Note that for each $t\in [0,1)$, $[i\beta+1-t]$ is equal to either $[i\beta+t]-1$, $[i\beta+t]$ or $[i\beta+t]+1$.
 Hence by \eqref{4.1}, for any $i\notin \mathcal{S}_t^b$, one has 
 \begin{equation}\label{14}
 d(T^{(i,[i\beta+1-t])}x,T^{(i,[i\beta+1-t])}y)<\frac{b\epsilon^2}{16M}.
 \end{equation}
 Then by \eqref{4.2}, \eqref{14} and the assumption that $\delta\in(0,\epsilon^2/(8M))$, one has
 \begin{equation}\label{15}
 \begin{split}
&\quad\frac{1}{\#(\Lambda_k^{\vec{v}}(1))}\sum_{(m,n)\in\Lambda_k^{\vec{v}}(1)}d(T^{(m,n)}x,T^{(m,n)}y)\\
&\leq \frac{1}{2k}\sum_{i=0}^{k-1}d(T^{(i,[i\beta+t])}x,T^{(i,[i\beta+t])}y)
+\frac{1}{2k}\sum_{i=0}^{k-1}d(T^{(i,[i\beta+1-t])}x,T^{(i,[i\beta+1-t])}y)\\
&\leq \frac{1}{2}\cdot\frac{\#(\mathcal{S}_t^b\cap [0,k-1])}{\#([0,k-1])}M+\frac{1}{2}b\delta+\frac{1}{2}\cdot\frac{\#(\mathcal{S}_t^b\cap [0,k-1])}{\#([0,k-1])}M+\frac{1}{2}\cdot\frac{b\epsilon^2}{16M}\\
&\leq  \frac{b\epsilon^2}{128M}+\frac{b\epsilon^2}{16M}+\frac{b\epsilon^2}{128M}+\frac{b\epsilon^2}{32M}< \frac{b\epsilon^2}{8M}.
 \end{split}
 \end{equation}

 Let $\mathcal{A}=\{(m,n)\in \Lambda^{\vec{v}}(1): d(T^{(m,n)}x,T^{(m,n)}y)\geq \epsilon/2\}$. Then by \eqref{15} we obtain that
 \begin{align*}
 \frac{\epsilon}{2}\cdot\frac{\#(\mathcal{A}\cap ([0,k-1]\times \mathbb{Z}))}{\#(\Lambda_k^{\vec{v}}(1))}
 \leq& \frac{1}{\#(\Lambda_k^{\vec{v}}(1))}\sum_{(m,n)\in\mathcal{A}\cap ([0,k-1]\times \mathbb{Z})}d(T^{(m,n)}x,T^{(m,n)}y)\\
 \leq &\frac{1}{\#(\Lambda_k^{\vec{v}}(1))}\sum_{(m,n)\in\Lambda_k^{\vec{v}}(1)}d(T^{(m,n)}x,T^{(m,n)}y)\leq \frac{b\epsilon^2}{8M}.
 \end{align*}
 Hence
 \begin{align}\label{2}
 \frac{\#(\mathcal{A}\cap ([0,k-1]\times \mathbb{Z}))}{\#(\Lambda_k^{\vec{v}}(1))}\leq\frac{b\epsilon}{4M}.
 \end{align}
 Since $\beta$ is an irrational number,  there exists $N>0$, for any $k\geq N$, we have
 \begin{align}\label{3}
 \frac{\#(\Lambda_k^{\vec{v}}(b))}{\#(\Lambda_k^{\vec{v}}(1))}>b/2.
 \end{align}
 Therefore, for any $k\geq N$, one has 
 \begin{equation*}
 \begin{split}
 &\frac{1}{\#(\Lambda_k^{\vec{v}}(b))}\sum_{(m,n)\in\Lambda_k^{\vec{v}}(b)}d(T^{(m,n)}x,T^{(m,n)}y)\\
 \leq& \frac{1}{\#(\Lambda_k^{\vec{v}}(b))}\sum_{(m,n)\in\Lambda_k^{\vec{v}}(b)\cap \mathcal{A}}d(T^{(m,n)}x,T^{(m,n)}y)+\epsilon/2\\
 \leq &M\frac{\#(\Lambda_k^{\vec{v}}(b)\cap \mathcal{A})}{\#(\Lambda_k^{\vec{v}}(b))}+\epsilon/2\\
 \overset{\eqref{3}}\leq& M\frac{\#(\mathcal{A}\cap ([0,k-1]\times \mathbb{Z}) )}{\#(\Lambda_k^{\vec{v}}(1))\cdot (b/2)}+\epsilon/2\\
 \overset{\eqref{2}}\leq &\frac{2M}{b}\cdot\frac{b\epsilon}{4M}+\epsilon/2=\epsilon.
 \end{split}
 \end{equation*}
 Now we finish the proof of Claim \ref{123}.
 \end{proof}
Next we finish our proof, which is divided into two cases, that is, $0<b\leq 1$ and $b>1$.
\begin{case}\label{C1}
$0<b\leq 1$.
\end{case}
If $k<N$, the consequence is obtained by the compactness of $X$ and the continuity of $T$. Now we consider $k\geq N$.
Note that there is  a fact that for any $y\in X$, if there is $(u,v)\in [0,1)^2$ such that $(y,u,v)\in B_{\bar{\tilde{d}}_k}\left((x,s,t),\frac{b^2\delta^2}{8M}\right)$ for some $(x,s,t)\in \widetilde F_k$, then $(y,s,t)\in B_{\bar{\tilde{d}}_k}((x,s,t),\frac{b^2\delta^2}{8M})$. By this fact and Claim \ref{123}, for any $y\in X$, if there is  $(u,v)\in [0,1)^2$ such that $(y,u,v)\in \bigcup_{(x,s,t)\in  \widetilde F_k}B_{\bar{\tilde{d}}_k}\left((x,s,t),\frac{b^2\delta^2}{8M}\right)$, then $y\in\bigcup_{x\in F_k}B_{\bar{d}^{\vec{v},b}_k}(x,\epsilon) $.
Let $$\mathcal{G}_k=\left\{y\in X:\text{for any }(u,v)\in [0,1)^2\text{, } (y,u,v)\notin \bigcup_{(x,s,t)\in  \widetilde F_k}B_{\bar{\tilde{d}}_k}\left((x,s,t),\frac{b^2\delta^2}{8M}\right)\right\}.$$
Then $\left(\mathcal{G}_k\times[0,1)^2\right)\bigcap\left(\bigcup_{(x,s,t)\in  \widetilde F_k}B_{\bar{\tilde{d}}_k}\left((x,s,t),\frac{b^2\delta^2}{8M}\right)\right)=\emptyset.$  So $\widetilde{\mu}\left(\mathcal{G}_k\times [0,1)^2\right)<1-\widetilde{\mu}\left(\bigcup_{(x,s,t)\in  \widetilde F_k}B_{\bar{\tilde{d}}_k}\left((x,s,t),\frac{b^2\delta^2}{8M}\right)\right)<\frac{b^2\delta^2}{4M}\leq \frac{b^2}{4M}\cdot \frac{\epsilon^4}{4M^2}<\epsilon$, that is, $\mu\left(\mathcal{G}_k\right)<\epsilon$. Hence $\mu\left(\bigcup_{x\in F_k}B_{\bar{d}^{\vec{v},b}_k}(x,\epsilon)\right)\ge1-\mu\left(\mathcal{G}_k\right)>1-\epsilon$. Combined with the fact that $\#(F_k)\leq C$ for each $k\in\mathbb{N}$, $\mu$ has bounded complexity with respect to $\{\bar{d}_k^{\vec{v},b}\}$.
\begin{case}\label{C2}
$b>1$.
\end{case}
 Note that we can find a finite subset $C$ of $\mathbb{Z}^2$ such that $$\Lambda^{\vec{v}}(b)\subset\bigcup_{(m',n')\in C}\left((m',n') +\Lambda^{\vec{v}}(1)\right)$$
 where $(m',n') +\Lambda^{\vec{v}}(1)=\{(m'+s,n'+t):(s,t)\in \Lambda^{\vec{v}}(1)\}$.  Since  $\mu$ has bounded complexity with respect to $\{\bar{d}_k^{\vec{v},1}\}$, $X$ is compact and $T$ is continuous, it follows  that $\mu$ has bounded complexity with respect to $\{\bar{d}_k^{\vec{v},b}\}$.

Now we have proved $\mu$ has bounded complexity with respect to $\{\bar{d}_k^{\vec{v},b}\}$ for any $b\in (0,\infty)$.
\end{proof}

The following result can be obtained immediately from the proof of Theorem \ref{thm5}.
\begin{cor}\label{cor1}
Let $(X,T)$ be a  $\mathbb{Z}^2$-t.d.s., $\mu\in M(X,T)$ and  $\vec{v}=(1,\beta)\in\mathbb{R}^2$ be a direction vector. Then the following two statements are equivalent.
\begin{itemize}
 \item[(a)]For any $b\in (0,\infty)$, $\mu$ has bounded complexity with respect to $\{\bar{d}_k^{\vec{v},b}\}$.
\item[(b)]For some $b\in (0,\infty)$, $\mu$ has bounded complexity with respect to $\{\bar{d}_k^{\vec{v},b}\}$.
\end{itemize}
\end{cor}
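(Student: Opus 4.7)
The direction $(a) \Rightarrow (b)$ is immediate from the definitions. For the converse, suppose that $\mu$ has bounded complexity with respect to $\{\bar{d}_k^{\vec{v},b_0}\}$ for some fixed $b_0 \in (0,\infty)$, and let $b \in (0,\infty)$ be arbitrary. I would split into two cases based on whether $b \leq b_0$ or $b > b_0$, handling each by reducing to the hypothesis.

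Suppose first that $b \leq b_0$. Then $\Lambda_k^{\vec{v}}(b) \subset \Lambda_k^{\vec{v}}(b_0)$ together with $d \geq 0$ yields the pointwise bound
$$\bar{d}_k^{\vec{v},b}(x,y) \leq \frac{\#(\Lambda_k^{\vec{v}}(b_0))}{\#(\Lambda_k^{\vec{v}}(b))}\,\bar{d}_k^{\vec{v},b_0}(x,y).$$
Both cardinalities grow linearly in $k$ with slope depending only on $\beta$, $b$, $b_0$, so $R := \sup_k \#(\Lambda_k^{\vec{v}}(b_0))/\#(\Lambda_k^{\vec{v}}(b))$ is finite and at least $1$. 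Given $\epsilon > 0$, the inclusion $B_{\bar{d}_k^{\vec{v},b_0}}(x, \epsilon/R) \subset B_{\bar{d}_k^{\vec{v},b}}(x, \epsilon)$ together with $\epsilon/R \leq \epsilon$ gives $\overline{span}_\mu^{\vec{v},b}(k,\epsilon) \leq \overline{span}_\mu^{\vec{v},b_0}(k,\epsilon/R)$, which is uniform in $k$ by hypothesis.

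Now suppose $b > b_0$. I would adapt Case \ref{C2} of the proof of Theorem \ref{thm5} essentially verbatim, simply replacing the distinguished value $1$ with $b_0$. Choose a finite $C \subset \mathbb{Z}^2$ satisfying $\Lambda^{\vec{v}}(b) \subset \bigcup_{(m',n') \in C}((m',n') + \Lambda^{\vec{v}}(b_0))$; this is possible because $\Lambda^{\vec{v}}(b)$ is a strip of transverse width $2b$ around $\mathbb{R}\vec{v}$ that can be covered by roughly $\lceil b/b_0 \rceil$ vertically stacked integer translates of $\Lambda^{\vec{v}}(b_0)$. Each $(m,n) \in \Lambda_k^{\vec{v}}(b)$ then factors as $(m',n')+(s,t)$ with $(m',n') \in C$ and $(s,t) \in \Lambda^{\vec{v}}(b_0)$ whose first coordinate $s = m - m'$ lies in a window of length $k$ shifted by at most $M := \max_{(m',n') \in C} |m'|$. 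Exploiting $T$-invariance of $\mu$ to transport $b_0$-spanning sets through each $T^{(m',n')}$, continuity of $T$ on the compact space $X$ to absorb the fixed temporal offset of size $M$, and the hypothesized uniform bound on $\overline{span}_\mu^{\vec{v},b_0}(k+2M,\cdot)$, the $|C|$ shifted covers amalgamate into a single cover of $\mu$ in $\bar{d}_k^{\vec{v},b}$ whose cardinality is bounded independently of $k$.

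The main obstacle is the bookkeeping in the second case: one must verify that the ratio $\#(\Lambda_{k+2M}^{\vec{v}}(b_0))/\#(\Lambda_k^{\vec{v}}(b))$ stays uniformly bounded in $k$, and that the boundary mismatch between $\Lambda_k^{\vec{v}}(b)$ and the translated $\Lambda^{\vec{v}}(b_0)$-pieces restricted to $[0,k-1] \times \mathbb{Z}$ contributes only an $O(1/k)$ error that is swallowed into the finite set $C$. Both facts follow from the linear growth in $k$ of all involved cardinalities with slopes controlled by $\beta$, $b$, $b_0$, which is precisely why the authors treat Corollary \ref{cor1} as an immediate consequence of the proof of Theorem \ref{thm5}.
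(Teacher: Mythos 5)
Your strategy is the same one the paper has in mind when it says Corollary \ref{cor1} is immediate from the proof of Theorem \ref{thm5}: for $b\le b_0$ one compares the two averages using the fact that $\#(\Lambda_k^{\vec{v}}(b))$ and $\#(\Lambda_k^{\vec{v}}(b_0))$ grow linearly at comparable rates (the paper performs this comparison inside Claim \ref{123}, via the Markov-type estimate \eqref{2} combined with the density bound \eqref{3}; your uniform ratio $R$ is a more direct version of the same comparison and is correct, including the span inequality $\overline{span}_\mu^{\vec{v},b}(k,\epsilon)\le \overline{span}_\mu^{\vec{v},b_0}(k,\epsilon/R)$), and for $b>b_0$ one covers $\Lambda^{\vec{v}}(b)$ by finitely many lattice translates of $\Lambda^{\vec{v}}(b_0)$ as in Case \ref{C2} and transports spanning sets by the invariance of $\mu$, with the $O(1)$ columns of mismatch contributing $O(1/k)$ to the mean and small $k$ handled by compactness of $X$ and continuity of $T$. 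That bookkeeping, which you describe, does go through.

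The one step that fails as you justified it is the existence of the covering set $C$ in the case $b>b_0$. A vertical integer translate $(0,j)+\Lambda^{\vec{v}}(b_0)$ consists of the points $(m,n)$ with $n-\beta m\in[j-b_0,j+b_0]$, so ``roughly $\lceil b/b_0\rceil$ vertically stacked integer translates'' cover $\Lambda^{\vec{v}}(b)$ only when $b_0\ge 1/2$; for $b_0<1/2$ the intervals $[j-b_0,j+b_0]$, $j\in\mathbb{Z}$, leave gaps in $[-b,b]$, and the corollary's hypothesis allows an arbitrarily small $b_0$, so this case cannot be dismissed. (The paper's Case \ref{C2} does not meet this issue because there $b_0=1$.) The covering claim itself is still true, but it needs general lattice translates: $(m',n')+\Lambda^{\vec{v}}(b_0)$ picks out the points with $n-\beta m\in[c-b_0,c+b_0]$, where $c=n'-\beta m'$, and the set of attainable offsets is $\mathbb{Z}+\beta\mathbb{Z}$, which is dense in $\mathbb{R}$ when $\beta$ is irrational and equals $\tfrac{1}{q}\mathbb{Z}$ (hence contains every value of $n-\beta m$) when $\beta=p/q$. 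Choosing finitely many such offsets forming a $b_0$-net of $[-b,b]$ — and, if you wish to avoid negative window shifts, taking the representatives with $m'\le 0$, which is possible since the offsets with $m'\le 0$ are already dense (resp.\ all of $\tfrac1q\mathbb{Z}$) — produces the finite set $C$, after which your invariance-and-amalgamation argument completes the proof.
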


Now we show the converse of Theorem \ref{thm5}.
 \begin{thm}\label{thm6}
 	Let $(X,T)$ be a  $\mathbb{Z}^2$-t.d.s., $\mu\in M(X,T)$ and  $\vec{v}=(1,\beta)\in\mathbb{R}^2$ be a direction vector. If $\mu$ has bounded complexity with respect to $\{\bar{d}_k^{\vec{v},b}\}$ for some $b\in (0,\infty)$  then it  has $\vec{v}$-discrete spectrum.
 \end{thm}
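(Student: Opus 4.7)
The plan is to reduce Theorem~\ref{thm6} to Proposition~\ref{prop2} applied to Park's $\mathbb{Z}$-m.p.s.~$(\widetilde{X},\widetilde{\mathcal{B}},\widetilde{\mu},W)$: by Lemma~\ref{lem2}, $\widetilde{\mu}$ has discrete spectrum iff $\mu$ has $\vec{v}$-discrete spectrum, and by Proposition~\ref{prop2}, $\widetilde{\mu}$ has discrete spectrum iff $\widetilde{\mu}$ has bounded complexity with respect to $\{\bar{\tilde{d}}_k\}$. Hence it suffices to lift bounded complexity of $\mu$ with respect to $\{\bar{d}_k^{\vec{v},b}\}$ to bounded complexity of $\widetilde{\mu}$ with respect to $\{\bar{\tilde{d}}_k\}$. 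As in Theorem~\ref{thm5}, the case $\beta\in\mathbb{Q}$ is handled by the $\mathbb{Z}$-action results, so I focus on irrational $\beta$. By a covering argument in the spirit of Case~\ref{C2} of the proof of Theorem~\ref{thm5} (extended using density of $\{n-\beta m:(m,n)\in\mathbb{Z}^2\}\subset\mathbb{R}$ to cover $\Lambda^{\vec{v}}(1)$ by finitely many translates of $\Lambda^{\vec{v}}(b)$), I may assume $b\ge 1$.

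Given $\epsilon>0$, fix $\delta,\eta>0$ (to be chosen later). The hypothesis gives $F_k\subset X$ with $\#F_k\le C(\delta)$ and $\mu(U_k)>1-\delta$, where $U_k=\bigcup_{x\in F_k}B_{\bar{d}_k^{\vec{v},b}}(x,\delta)$. Let $G_\eta\subset[0,1)$ be a finite $\eta$-net and set $\widetilde{F}_k=F_k\times G_\eta\times G_\eta$, of size at most $C(\delta)/\eta^2$. For $(y,u,v)\in U_k\times[0,1)^2$, pick $x(y)\in F_k$ with $\bar{d}_k^{\vec{v},b}(x(y),y)<\delta$ and grid points $s_j,t_j$ within $\eta$ of $u,v$. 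Using $W^i(x,s,t)=(T^{(i,[i\beta+t])}x,s,\{i\beta+t\})$ for $i\ge 0$, $s,t\in[0,1)$, the definition of $\tilde{d}$ and the triangle inequality give
\[\bar{\tilde{d}}_k((x(y),s_j,t_j),(y,u,v))\le 2\eta+I_1+I_2,\]
where $I_1=\tfrac{1}{k}\sum_{i=0}^{k-1}d(T^{(i,[i\beta+t_j])}x(y),T^{(i,[i\beta+t_j])}y)$ and $I_2=\tfrac{1}{k}\sum_{i=0}^{k-1}d(T^{(i,[i\beta+t_j])}y,T^{(i,[i\beta+v])}y)$. Since $t_j\in[0,1)$ and $b\ge 1$, one has $(i,[i\beta+t_j])\in\Lambda_k^{\vec{v}}(1)\subset\Lambda_k^{\vec{v}}(b)$, so $I_1\le(\#\Lambda_k^{\vec{v}}(b)/k)\,\bar{d}_k^{\vec{v},b}(x(y),y)\le L(b)\,\delta$.

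The main obstacle is controlling $I_2$: it is not absorbed by closeness of $x(y)$ to $y$, because $d(T^{(0,\pm 1)}z,z)$ is generally not small. Observe that each summand of $I_2$ vanishes unless $[i\beta+t_j]\neq[i\beta+v]$, and for $|t_j-v|<\eta<1$ this amounts to $\{i\beta+\min(t_j,v)\}$ falling in an interval of length $<\eta$. Since $\beta$ is irrational, Weyl's equidistribution theorem provides $k_0=k_0(\beta,\eta)$ such that the discrepancy of $\{i\beta+c\mod 1\}_{0\le i<k}$ is at most $\eta$ uniformly in $c\in[0,1)$ for all $k\ge k_0$; consequently $I_2\le 2\,\mathrm{diam}(X)\cdot\eta$ for such $k$. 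Choosing $\delta,\eta$ small enough makes $\bar{\tilde{d}}_k<\epsilon$, so $\widetilde{F}_k$ gives the required cover of $\widetilde{\mu}$-mass $>1-\delta$ for $k\ge k_0$. For the finitely many $k<k_0$, continuity of $W^0,W,\ldots,W^{k_0-1}$ on the compact space $\widetilde{X}$ produces a single finite $\epsilon$-cover in $\tilde{d}_{k_0}$, which also covers in $\bar{\tilde{d}}_k$ for all $k\le k_0$. Merging the two regimes yields bounded complexity of $\widetilde{\mu}$ with respect to $\{\bar{\tilde{d}}_k\}$, and Lemma~\ref{lem2} together with Proposition~\ref{prop2} then delivers $\vec{v}$-discrete spectrum of $\mu$.
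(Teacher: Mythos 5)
Your proposal is correct and takes essentially the same route as the paper's proof: after normalizing the band width, you lift the hypothesis to Park's system $(\widetilde{X},\widetilde{\mathcal{B}},\widetilde{\mu},W)$ by pairing the given covers of $X$ with a finite net in the torus coordinates, bound $I_1$ by the directional mean distance since $(i,[i\beta+t_j])\in\Lambda_k^{\vec{v}}(b)$, control $I_2$ (the terms with $[i\beta+t_j]\neq[i\beta+v]$) by uniform equidistribution of the irrational rotation exactly as in the paper's inequality \eqref{4}, and then conclude via Proposition \ref{prop2} and Lemma \ref{lem2}. The only slip is your treatment of the finitely many $k<k_0$: $W$ is \emph{not} continuous on $\widetilde{X}$ (the integer part $[\beta+v]$ jumps while the first coordinate jumps with it), so this case should instead be justified, as the paper does, by compactness of $X$, continuity of each $T^{(m,n)}$, and discarding a thin neighbourhood in the $t$-coordinate of the finitely many discontinuity points of $t\mapsto[i\beta+t]$, $0\le i<k_0$ (or by a general Radon-measure covering argument) --- a cosmetic repair rather than a gap.
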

 \begin{proof}
 	In the process of proof we still use the notation in the proof of Theorem \ref{thm5} and also only consider the case $\beta\notin\mathbb{Q}$.

 By Corollary \ref{cor1}, we may assume that $b=1$. Note that for any $t\in [0,1)$ and $i\in\mathbb{N}$, $(i,[i\beta+t])\in \Lambda^{\vec{v}}(1)$. 
As $\mu$ has bounded complexity with respect to $\{\bar{d}_k^{\vec{v},1}\}$, we know that there exists  $C=C(\epsilon)>0$ such that for any $k\in \mathbb{N}$, there exists $F_k\subset X$ with $\#(F_k)\leq C$ such that
$$\mu\left(\bigcup_{x\in F_k}B_{\bar{d}^{\vec{v},1}_k}\left(x,\frac{\epsilon^2}{32M^2}\right)\right)>1-\frac{\epsilon^2}{32M^2}.$$
Since the aciton $W$ on the second coordinate is indetity,  and the third coordinate is an irrational rotation of the circle, there exists $C_1>0$ such that for any $k\in\mathbb{N}$, there exists $E_k\subset [0,1)$ with $\#(E_k)\leq C_1$ such that
$$m\left(\bigcup_{s\in E_k}B_{\rho_k}(s,\epsilon/(8M))\times \bigcup_{t\in E_k}B_{\rho_k}(t,\epsilon/(8M))\right)=1,$$
where $$\rho_k(s,u)=\max_{0\leq i\leq k-1}\left\{\min\{|\{i\beta+s\}-\{i\beta+u\}|,1-|\{i\beta+s\}-\{i\beta+u\}|\}\right\}.$$ Let $\widetilde{F}_k=\{(x,s,t)\in \widetilde{X}: x\in F_k, (s,t)\in E_k\times E_k\}$. Since $F_k$ and $E_k$ are finite set for each $k\in \mathbb{N}$, there exists $\widetilde{C}>0$ such that $\#(\widetilde{F}_k)\leq \widetilde{C}$ for each $k\in \mathbb{N}$. By Birkhoff ergodic theory,  for any $s,u \in [0,1)$, if $|u-s|<\epsilon/(8M)$, then there exists  $N_1>0$, whenever $k\geq N_1$, we have
\begin{equation}\label{4}
\frac{1}{k}\#(\{i\in[0,k-1]:[i\beta+u]\neq[i\beta+s]\})<\epsilon/(4M).
\end{equation}

Now we prove that $\widetilde{\mu}$ has bounded complexity with repsect to $\{\bar{\tilde{d}}_k\}$ by proving that $\widetilde{\mu}\left(\bigcup_{(x,s,t)\in \widetilde F_k}B_{\bar{\tilde{d}}_k}\left((x,s,t),\epsilon\right)\right)\geq1-\epsilon$.
For any $(y,u,v)\in\bigcup_{x\in F_k}B_{\bar{d}^{\vec{v},1}_k}(x,\frac{\epsilon^2}{32M^2})\times \bigcup_{s\in E_k}B_{\rho_k}(s,\epsilon/8M)\times \bigcup_{t\in E_k}B_{\rho_k}(t,\epsilon/8M)$, there exist $x\in F_k$ and $(s,t)\in E_k\times E_k$ such that $y\in B_{\bar{d}^{\vec{v},b}_k}(x,\frac{\epsilon^2}{32M^2})$ and $(u,v)\in B_{\rho_k}(s,\epsilon/(8M))\times B_{\rho_k}(t,\epsilon/(8M))$.
If $k\geq N_1$, then
\begin{equation}\label{6}
\begin{split}
&\bar{\tilde{d}}_k((x,s,t),(y,u,v))=\frac{1}{k}\sum_{i=0}^{k-1}\tilde{d}(W^i(x,s,t),W^i(y,u,v))\\
\leq&\frac{1}{k}\sum_{i=0}^{k-1}\max\left\{d(T^{(i,[i\beta+t])}x,T^{(i,[i\beta+v])}y),\frac{\epsilon}{4M}\right\}\\
\leq&\frac{1}{k}\sum_{i=0}^{k-1}\max\left\{d(T^{(i,[i\beta+t])}x,T^{(i,[i\beta+t])}y)+d(T^{(i,[i\beta+t])}y,T^{(i,[i\beta+v])}y),\frac{\epsilon}{4M}\right\}\\
\leq&\frac{1}{k}\sum_{i=0}^{k-1}d(T^{(i,[i\beta+t])}y,T^{(i,[i\beta+v])}y)+\frac{1}{k}\sum_{i=0}^{k-1}\max\left\{d(T^{(i,[i\beta+t])}x,T^{(i,[i\beta+t])}y),\frac{\epsilon}{4M}\right\}\\
\overset{\eqref{4}}<&\epsilon/2+\frac{1}{k}\sum_{i=0}^{k-1}\max\left\{d(T^{(i,[i\beta+t])}x,T^{(i,[i\beta+t])}y),\frac{\epsilon}{4M}\right\}.
\end{split}
\end{equation}
Let $\mathcal{A}=\{i\in \mathbb{Z}_+:d(T^{(i,[i\beta+t])}x,T^{(i,[i\beta+t])}y)>\epsilon/(4M)\}$. Then
\begin{align*}
(\epsilon/(4M))\cdot\frac{\#(\mathcal{A}\cap [0,k-1])}{2k+1}
\leq& \frac{1}{2k+1}\sum_{(m,n)\in\Lambda_k^{\vec{v}}(1)}d(T^{(m,n)}x,T^{(m,n)}y)\\
=&\frac{1}{\#(\Lambda_k^{\vec{v}}(1))}\sum_{(m,n)\in\Lambda_k^{\vec{v}}(1)}d(T^{(m,n)}x,T^{(m,n)}y)<\frac{\epsilon^2}{32M^2}.
\end{align*}
So there exists  $N_2>0$ such that for any $k\geq N_2$, we have $$\frac{\#(\mathcal{A}\cap [0,k-1])}{k}\le\frac{\epsilon}{4M}.$$ Let $N=\max\{N_1,N_2\}$. By \eqref{6}, we know that if $k\geq N$,
\begin{equation*}
\begin{split}
\bar{\tilde{d}}_k((x,s,t),(y,u,v))<& \epsilon/2+\frac{1}{k}\#(\mathcal{A}\cap [0,k-1])\cdot M+\frac{\epsilon}{4M} \\
\leq&\epsilon/2+M\cdot\frac{\epsilon}{4M}+\frac{\epsilon}{4M}\leq \epsilon.
\end{split}
\end{equation*}
It follows that for any $k\geq N$, $(y,u,v)\in B_{\bar{\tilde{d}}_k}\left((x,s,t),\epsilon\right)$.
Hence for any $k\geq N$, $$\bigcup_{x\in F_k}B_{\bar{d}^{\vec{v},1}_k}(x,\frac{\epsilon^2}{32M^2})\times \bigcup_{s\in E_k}B_{\rho_k}(s,\frac{\epsilon}{8M})\times \bigcup_{t\in E_k}B_{\rho_k}(t,\frac{\epsilon}{8M})\subset\bigcup_{(x,s,t)\in \widetilde F_k}B_{\bar{\tilde{d}}_k}\left((x,s,t),\epsilon\right).$$ So we have
\begin{equation*}
\begin{split}
&\widetilde{\mu}\left(\bigcup_{(x,s,t)\in \widetilde F_k}B_{\bar{\tilde{d}}_k}\left((x,s,t),\epsilon\right)\right)\\
\geq& \mu\left(\bigcup_{x\in F_k}B_{\bar{d}^{\vec{v},1}_k}\left(x,\frac{\epsilon^2}{32M^2}\right)\right)\cdot m\left(\bigcup_{s\in E_k}B_{\rho_k}(s,\epsilon/(8M))\times \bigcup_{t\in E_k}B_{\rho_k}(t,\epsilon/(8M))\right)\\
\geq& 1-\frac{\epsilon^2}{16M^2}>1-\epsilon.
\end{split}
\end{equation*}
If $k<N$, then the consequence is obtained by the compactness of $X$ and the continuity of $T$.
Thus we have shown $\widetilde{\mu}$ has bounded complexity with respect to $\{\bar{\tilde{d}}_k\}$. It follows from Proposition \ref{prop2} that $\widetilde{\mu}$ has discrete spectrum. Directly from Lemma \ref{lem2}, we know that $\mu$ has $\vec{v}$-discrete spectrum. This completes the proof of Theorem \ref{thm6}.
 \end{proof}

Then we directly obtain the following result by Theorem \ref{thm5} and Theorem \ref{thm6}.
 \begin{thm}\label{thm7}
 	Let $(X,T)$ be a  $\mathbb{Z}^2$-t.d.s., $\mu\in M(X,T)$ and $\vec{v}=(1,\beta)\in\mathbb{R}^2$ be a direction vector. Then the following statements are equivalent.
 	\begin{itemize}
 	\item[(a)]$\mu$ has bounded complexity with respect to $\{\bar{d}_k^{\vec{v},b}\}$ for any $b\in (0,\infty)$.
 	\item[(b)]$\mu$ has bounded complexity with respect to $\{\bar{d}_k^{\vec{v},b}\}$ for some $b\in (0,\infty)$.
 	\item[(c)]$\mu$ has $\vec{v}$-discrete spectrum.
 	\end{itemize}
 \end{thm}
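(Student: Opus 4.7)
The plan is to assemble Theorem \ref{thm7} directly from the two non-trivial implications already established, namely Theorem \ref{thm5} and Theorem \ref{thm6}, arranged in a short cycle. Since the statement is a three-way equivalence, I would prove it by the cycle (a) $\Rightarrow$ (b) $\Rightarrow$ (c) $\Rightarrow$ (a). Only one of these three implications is free; the other two are precisely the content of the two preceding theorems, so no additional work beyond quoting them is required here.

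First, (a) $\Rightarrow$ (b) is immediate: if the bounded complexity condition holds for every $b\in(0,\infty)$, it holds in particular for some $b\in(0,\infty)$. Next, for (b) $\Rightarrow$ (c), I would invoke Theorem \ref{thm6}, which says that if $\mu$ has bounded complexity with respect to $\{\bar{d}_k^{\vec{v},b}\}$ for some $b\in(0,\infty)$, then $\mu$ has $\vec{v}$-discrete spectrum. Finally, to close the cycle with (c) $\Rightarrow$ (a), I would apply Theorem \ref{thm5}, which gives exactly that $\vec{v}$-discrete spectrum of $\mu$ forces bounded complexity with respect to $\{\bar{d}_k^{\vec{v},b}\}$ for every $b\in(0,\infty)$.

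There is no real obstacle to overcome in this proof, since all the substantive analysis (the passage to the auxiliary $\mathbb{Z}$-m.p.s. $(\widetilde{X},\widetilde{\mathcal{B}},\widetilde{\mu},W)$ via Lemma \ref{lem2}, the use of Proposition \ref{prop2}, the control of the discrepancy between the strips $\Lambda_k^{\vec{v}}(b)$ and $\Lambda_k^{\vec{v}}(1)$ through the irrationality of $\beta$, and the truncation arguments with the rotation factor) has been carried out in the proofs of Theorem \ref{thm5} and Theorem \ref{thm6}. Consequently, the proof of Theorem \ref{thm7} reduces to a one-line bookkeeping statement: combine Theorem \ref{thm5} and Theorem \ref{thm6}. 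If one wanted to be slightly more explicit, I would also remark that Corollary \ref{cor1} is a by-product of the same circle of implications, since the equivalence of (a) and (b) has just been proved via their common equivalence with (c).
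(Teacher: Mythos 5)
Your proposal is correct and matches the paper exactly: Theorem \ref{thm7} is obtained by combining Theorem \ref{thm5} and Theorem \ref{thm6}, with the implication (a) $\Rightarrow$ (b) being trivial. No further commentary is needed.
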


Immediately from the proof of Theorem \ref{thm5} and Theorem \ref{thm6}, we can get the following result.
\begin{cor}\label{cor2}
	The following statements are equivalent.
	\begin{itemize}
		\item[(a)] $\widetilde{\mu}$ has bounded complexity with respect to $\{\bar{\tilde{d}}_k\}$. 	
	\item[(b)] $\mu$ has bounded complexity with respect to $\{\bar{d}_k^{\vec{v},b}\}$ for any $b\in (0,\infty)$.
	\item[(c)] $\mu$ has bounded complexity with respect to $\{\bar{d}_k^{\vec{v},b}\}$ for some $b\in (0,\infty)$.
	\end{itemize}
\end{cor}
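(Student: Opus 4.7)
The plan is to observe that Corollary \ref{cor2} is essentially a repackaging of what is already proved inside Theorems \ref{thm5} and \ref{thm6}: both proofs pass through the intermediate statement ``$\widetilde{\mu}$ has bounded complexity with respect to $\{\bar{\tilde{d}}_k\}$,'' rather than through the $\vec{v}$-discrete spectrum of $\mu$ per se. So I would prove the three-way equivalence by a short cycle (a) $\Rightarrow$ (b) $\Rightarrow$ (c) $\Rightarrow$ (a), reusing the computations from Theorems \ref{thm5} and \ref{thm6} verbatim but skipping the invocations of Proposition \ref{prop2} and Lemma \ref{lem2}.

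First, for (a) $\Rightarrow$ (b), I would inspect the proof of Theorem \ref{thm5} and note that the $\vec{v}$-discrete spectrum hypothesis is used only once: it feeds into Proposition \ref{prop2} to conclude that $\widetilde{\mu}$ has bounded complexity with respect to $\{\bar{\tilde{d}}_k\}$. Every subsequent step — the construction of $F_k$ from $\widetilde{F}_k$, the key Claim \ref{123} relating $\bar{\tilde{d}}_k$-balls to $\bar{d}_k^{\vec{v},b}$-balls via the irrationality of $\beta$, the Case \ref{C1}/\ref{C2} split to pass from $b\le 1$ to general $b$ — is a direct implication from bounded complexity of $\widetilde{\mu}$ to bounded complexity of $\mu$ with respect to $\{\bar{d}_k^{\vec{v},b}\}$ for every $b\in(0,\infty)$. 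So (a) $\Rightarrow$ (b) is obtained by simply starting the proof of Theorem \ref{thm5} at the sentence ``$\widetilde{\mu}$ has bounded complexity with respect to $\{\bar{\tilde{d}}_k\}$'' and running it to the end.

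The implication (b) $\Rightarrow$ (c) is immediate from the quantifier structure, requiring no argument. For (c) $\Rightarrow$ (a), I would likewise revisit the proof of Theorem \ref{thm6}: starting from the assumption that $\mu$ has bounded complexity with respect to $\{\bar{d}_k^{\vec{v},b}\}$ for some $b$, Corollary \ref{cor1} reduces to $b=1$, and then the construction of $\widetilde{F}_k = F_k\times (E_k\times E_k)$ via equidistribution of the irrational rotation, together with the ergodic-theoretic estimates \eqref{4} and \eqref{6}, shows directly that $\widetilde{\mu}\bigl(\bigcup_{(x,s,t)\in\widetilde{F}_k} B_{\bar{\tilde{d}}_k}((x,s,t),\epsilon)\bigr) > 1-\epsilon$ for all sufficiently large $k$. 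The small-$k$ case is handled by compactness of $X$ and continuity of $T$. This is exactly (a), and it is precisely the content of the proof of Theorem \ref{thm6} prior to its final invocations of Proposition \ref{prop2} and Lemma \ref{lem2}.

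There is no real obstacle here: the entire point of the corollary is that the intermediate ``bridge'' assertion about $\widetilde{\mu}$ appearing in both proofs can be stripped out and stated on its own. The only thing to verify carefully is that no step of either proof secretly uses $\vec{v}$-discrete spectrum beyond its role in yielding bounded complexity of $\widetilde{\mu}$; a quick pass through both arguments confirms this, so the corollary follows.
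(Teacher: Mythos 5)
Your proposal is correct and is exactly the paper's intended argument: the paper derives Corollary \ref{cor2} with the single remark that it follows ``immediately from the proof of Theorem \ref{thm5} and Theorem \ref{thm6},'' i.e.\ both proofs factor through the bridge statement about $\widetilde{\mu}$, so truncating them before the invocations of Lemma \ref{lem2} and Proposition \ref{prop2} yields (a) $\Rightarrow$ (b) and (c) $\Rightarrow$ (a), with (b) $\Rightarrow$ (c) trivial. Your use of Corollary \ref{cor1} to reduce to $b=1$ in the (c) $\Rightarrow$ (a) step matches the paper's Theorem \ref{thm6} and introduces no circularity, since Corollary \ref{cor1} is obtained by comparing different values of $b$ directly.
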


It is different from the topological case that we will prove that bounded meausre-theoretic complexity with respect to $\{\bar{d}_k^{\vec{v},b}\}$ and $\{\hat{d}_k^{\vec{v},b}\}$ are equivalent for any $b\in (0,\infty)$. The corresponding result for $\mathbb{Z}$-t.d.s. has been shown by Huang et al.  \cite[Theorem 4.4]{H1}, which is restated as follows.
\begin{lem}\label{lem3}
Let $(X,T)$ be a $\mathbb{Z}$-t.d.s. with a metric $d$ and $\mu\in M(X,T)$. Then $\mu$ has bounded complexity with respect to $\{\overline {d}_k\}$ if and only if it has bounded complexity with respect to  $\{\hat{d}_k\}$.
\end{lem}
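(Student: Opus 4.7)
The forward direction, that bounded complexity with respect to $\{\hat{d}_k\}$ implies bounded complexity with respect to $\{\bar{d}_k\}$, is immediate from the pointwise inequality $\bar{d}_k(x,y)\leq \hat{d}_k(x,y)$: every $\hat{d}_k$-ball is contained in the corresponding $\bar{d}_k$-ball, so any $\hat{d}_k$-spanning set automatically witnesses bounded complexity with respect to $\{\bar{d}_k\}$ with the same parameters. The content of the lemma therefore lies in the reverse implication, and my plan is to route through discrete spectrum.

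Assume $\mu$ has bounded complexity with respect to $\{\bar{d}_k\}$. By the $\mathbb{Z}$-version of Proposition \ref{prop2}, this is equivalent to $\mu$ having discrete spectrum. The Halmos--von Neumann theorem then supplies a measure-theoretic isomorphism between $(X,\mathcal{B}_X,\mu,T)$ and a rotation on a compact abelian group equipped with Haar measure (with the ergodic decomposition handling the non-ergodic case). Such a rotation is equicontinuous with respect to a translation-invariant metric and is in particular equicontinuous in the mean. Using Lusin's theorem to pull this structure back through the isomorphism, one extracts, for every $\tau>0$, a compact set $K\subset X$ with $\mu(K)>1-\tau$ on which $T$ is equicontinuous in the mean (in the $\mathbb{Z}$-analogue of the notion from Section 3). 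The implication $(c)\Rightarrow(a)$ in the proof of Theorem \ref{thm4}, carried out for $\mathbb{Z}$-actions, then produces uniform $\hat{d}_k$-covers of measure exceeding $1-\epsilon$ with cardinality bounded independently of $k$: one partitions $K$ into finitely many disjoint compact pieces on which the averaged orbital distances to chosen centers $x_j$ stay uniformly below $\epsilon/2$ past some threshold $N_j$, and then invokes uniform continuity of $T^i$ for $i\leq \max_j N_j$ to handle small $k$.

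The principal obstacle is the simultaneity-of-scales issue: bounded complexity with respect to $\{\bar{d}_k\}$ furnishes, for each fixed $k$, a small $\bar{d}_k$-spanning set, but these sets across different values of $k$ carry no a priori compatibility, whereas $\hat{d}_k=\max_{j\leq k}\bar{d}_j$ demands simultaneous control of all the averaged distances up through time $k$. A purely combinatorial refinement of a single $\bar{d}_k$-cover cannot close this gap, because averages at intermediate times $j<k$ may spike on sets of positive measure even when the average at time $k$ is small. The global rigidity furnished by discrete spectrum (via Halmos--von Neumann) is exactly what converts the pointwise-in-$k$ hypothesis into the simultaneous equicontinuity-in-the-mean statement needed to construct $\hat{d}_k$-covers uniformly in $k$.
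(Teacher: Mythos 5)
The paper does not actually prove this lemma; it is imported verbatim from Huang et al.\ \cite[Theorem 4.4]{H1}, so there is no in-paper argument to compare yours against. Your easy direction is fine, and your overall strategy for the hard direction --- pass from bounded $\{\bar d_k\}$-complexity to discrete spectrum via Proposition \ref{prop2}, from discrete spectrum to $\mu$-mean equicontinuity, and then back to bounded $\{\hat d_k\}$-complexity by the $\mathbb{Z}$-version of the argument in Theorem \ref{thm4}, (c)$\Rightarrow$(a) --- is indeed how this circle of equivalences is closed in the literature. You have also correctly diagnosed why a purely combinatorial refinement of a single $\bar d_k$-cover cannot work: the sequence $\{\bar d_k\}$ is not monotone in $k$.

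The gap is that the pivotal arrow, ``discrete spectrum $\Rightarrow$ $T$ is $\mu$-mean equicontinuous,'' is nowhere available in this paper (Proposition \ref{prop2} and Theorem \ref{thm4} give the two ends of your chain but not this link), and it is itself a substantial theorem that your two sentences do not establish. Concretely: the Halmos--von Neumann conjugacy $\pi$ to a group rotation is only a measure isomorphism, so after Lusin you control $d_G(\pi(T^ix),\pi(T^iy))=d_G(\pi(x),\pi(y))$ in the group, not $d(T^ix,T^iy)$ in $X$; to return to the metric on $X$ you must restrict to points whose orbits spend a proportion at least $1-\epsilon$ of the time in a set where $\pi^{-1}$ is uniformly continuous, which requires Birkhoff's theorem and an Egorov-type uniformization, none of which appears in your sketch. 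Worse, Halmos--von Neumann is a theorem about ergodic systems; in the non-ergodic case the conjugating maps and the target groups vary from ergodic component to ergodic component, and making the whole construction measurable in the component is precisely the content of \cite{Y} --- ``the ergodic decomposition handling the non-ergodic case'' is not a one-line patch. There is also a latent circularity to check: you derive \cite[Theorem 4.4]{H1} from \cite[Theorem 4.7]{H1}, which is legitimate only if the proof of the latter in the source does not itself pass through the former. As it stands, the proposal correctly identifies where the difficulty lives but relocates it into an asserted step rather than resolving it.
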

Taking advantage of Lemma \ref{lem3}, we obtain the equivalence of  bounded meausre-theoretic complexity with respect to $\{\bar{d}_k^{\vec{v},b}\}$ and $\{\hat{d}_k^{\vec{v},b}\}$.
 \begin{thm}\label{thm8}
Let $(X,T)$ be a  $\mathbb{Z}^2$-t.d.s., $\mu\in M(X,T)$ and  $\vec{v}=(1,\beta)\in\mathbb{R}^2$ be a direction vector. Then for any $b\in(0,\infty)$ the following statements are equivalent.
\begin{itemize}
\item[(a)]$\mu$ has bounded complexity with respect to $\{\bar{d}_k^{\vec{v},b}\}$.
\item[(b)]$\mu$ has bounded complexity with respect to $\{\hat{d}_k^{\vec{v},b}\}$.
\end{itemize}
\begin{proof}
(b) $\Rightarrow$ (a). Directly from definitions.

(a) $\Rightarrow$ (b). Suppose that $\mu$ has bounded complexity with respect to $\{\bar{d}_k^{\vec{v},b}\}$. By Corollary \ref{cor2}, we know that $\widetilde{\mu}$ has bounded complexity with respect to $\{\bar{\tilde{d}}_k\}$. So we obtain that $\widetilde{\mu}$ has bounded complexity with respect to $\{\hat{\tilde{d}}_k\}$ by Lemma \ref{lem3}.
Since $\hat{\tilde{d}}_k(x,y)=\max_{1\leq i\leq k}\{\bar{\tilde{d}}_i(x,y)\}$ and $\hat{d}_k^{\vec{v},b}(x,y)=\max_{1\leq i\leq k}\{\bar{d}_i^{\vec{v},b}(x,y)\}$ for each $k\in \mathbb{N}$ and $x,y\in X$, we may deduce that $\mu$ has bounded complexity with respect to $\{\hat{d}_k^{\vec{v},b}\}$ by the same method in the proof of Theorem \ref{thm5}. This finishes the proof of Theorem \ref{thm8}.
\end{proof}
 \end{thm}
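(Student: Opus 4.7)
The implication (b) $\Rightarrow$ (a) is immediate since $\bar{d}_k^{\vec{v},b}(x,y)\le \hat{d}_k^{\vec{v},b}(x,y)$ pointwise, so any $\epsilon$-spanning set with respect to $\hat{d}_k^{\vec{v},b}$ is automatically $\epsilon$-spanning with respect to $\bar{d}_k^{\vec{v},b}$. The substance of the theorem lies in (a) $\Rightarrow$ (b), and my plan is to route the argument through the auxiliary $\mathbb{Z}$-m.p.s.\ $(\widetilde{X},\widetilde{\mathcal{B}},\widetilde{\mu},W)$ of Park.

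First I would use Corollary \ref{cor2} to transfer hypothesis (a) upward: since $\mu$ has bounded complexity with respect to $\{\bar{d}_k^{\vec{v},b}\}$ for some (equivalently, for any) $b\in(0,\infty)$, the extension $\widetilde{\mu}$ has bounded complexity with respect to $\{\bar{\tilde{d}}_k\}$. Next, inside the $\mathbb{Z}$-system I would apply Lemma \ref{lem3} (Huang--Li--Thouvenot--Xu-type equivalence for $\mathbb{Z}$-actions) to conclude that $\widetilde{\mu}$ also has bounded complexity with respect to the max-mean sequence $\{\hat{\tilde{d}}_k\}$. This reduces the problem to a ``downward'' transfer from $\hat{\tilde{d}}_k$ on $\widetilde{X}$ to $\hat{d}_k^{\vec{v},b}$ on $X$, entirely parallel to the $\bar{\tilde{d}}_k\rightsquigarrow\bar{d}_k^{\vec{v},b}$ transfer carried out in the proof of Theorem \ref{thm5}.

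For this final step I would mimic the proof of Theorem \ref{thm5} almost verbatim, exploiting the identity $\hat{\tilde{d}}_k=\max_{1\le i\le k}\bar{\tilde{d}}_i$ and $\hat{d}_k^{\vec{v},b}=\max_{1\le i\le k}\bar{d}_i^{\vec{v},b}$. Given a small $\epsilon>0$, pick $\delta$ and a cover $\widetilde{F}_k$ of $\widetilde{X}$ by $\hat{\tilde{d}}_k$-balls of the tiny radius $b^2\delta^2/(8M)$ dictated by Claim \ref{123}; set $F_k=\pi_X(\widetilde{F}_k)$, which has uniformly bounded cardinality. For any $y\in X$ with $(y,u,v)$ lying in one of these $\hat{\tilde{d}}_k$-balls around $(x,s,t)\in\widetilde{F}_k$, the inequality $\bar{\tilde{d}}_i((x,s,t),(y,s,t))\le b^2\delta^2/(8M)$ holds for every $1\le i\le k$ simultaneously, so Claim \ref{123} (applied with $i$ in place of $k$) converts each of them, uniformly in $i$, into $\bar{d}_i^{\vec{v},b}(x,y)<\epsilon$ for $i\ge N$. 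Taking the maximum over $1\le i\le k$ gives $\hat{d}_k^{\vec{v},b}(x,y)<\epsilon$.

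The only subtle point is that Claim \ref{123} yields its conclusion only once $i$ exceeds a threshold $N=N(b,\epsilon)$, coming from the need $\#(\Lambda_i^{\vec{v}}(b))/\#(\Lambda_i^{\vec{v}}(1))>b/2$; the finitely many indices $i<N$ must be absorbed separately. This I would handle exactly as in Case \ref{C1} of Theorem \ref{thm5}, using compactness of $X$ and uniform continuity of the finitely many maps $T^{(m,n)}$ with $(m,n)\in\Lambda_N^{\vec{v}}(b)$ to build, at the cost of enlarging the constant, a single finite $\hat{d}_N^{\vec{v},b}$-cover that takes care of all small $i$. This, together with the $b>1$ reduction already performed in Case \ref{C2} of Theorem \ref{thm5} via the decomposition of $\Lambda^{\vec{v}}(b)$ into finitely many translates of $\Lambda^{\vec{v}}(1)$, gives bounded complexity with respect to $\{\hat{d}_k^{\vec{v},b}\}$ for every $b\in(0,\infty)$. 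The main obstacle is making sure the Claim \ref{123} estimate applies simultaneously across the range $N\le i\le k$ and that the exceptional tail-measure $\epsilon/8M$-type losses do not accumulate as $i$ varies; but since $\mathcal{A}$ and $\mathcal{S}_t^b$ are defined using only $d$ on $X$, the same $\delta$ and the same good set work for all $i\ge N$.
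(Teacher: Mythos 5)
Your proposal is correct and follows the same route as the paper's proof: (b) $\Rightarrow$ (a) from the pointwise inequality, and (a) $\Rightarrow$ (b) by lifting to $(\widetilde{X},\widetilde{\mathcal{B}},\widetilde{\mu},W)$ via Corollary \ref{cor2}, applying Lemma \ref{lem3} there, and transferring back down using $\hat{\tilde{d}}_k=\max_{1\le i\le k}\bar{\tilde{d}}_i$ and $\hat{d}_k^{\vec{v},b}=\max_{1\le i\le k}\bar{d}_i^{\vec{v},b}$ exactly as in the proof of Theorem \ref{thm5}. Your additional remarks on applying Claim \ref{123} uniformly over $N\le i\le k$ and absorbing the finitely many indices $i<N$ by uniform continuity make explicit the details the paper leaves to the reader, and they are sound.
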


\section*{Acknowledgements}
  The authors would like to thank Prof. Wen Huang for his useful comments and suggestions. C. Liu was partially supported by NNSF of China (12090012). L. Xu was partially supported by NNSF of China (11801538, 11871188, 12031019) and the USTC Research Funds of the Double First-Class Initiative.

\begin{appendix}
\section{Results for $\mathbb{Z}^q$-t.d.s.}
In this section, we introduce the corresponding results of $\mathbb{Z}^q$-t.d.s. which are proved by exactly the same methods for the case of $\mathbb{Z}^2$-t.d.s.  Let $(X ,T)$ be a $\mathbb{Z}^q$-t.d.s.  with a metric $d$ and $\mu\in M(X,T)$. Let $\vec{v}=(1,\beta_2,\ldots,\beta_q)\in\mathbb{R}^q$ be a direction vector and $\textbf{b}=(b_2,\ldots,b_q)\in \mathbb{R}^{q-1}_+:=\{\textbf{u}=(u_1,\ldots,u_{q-1})\in \mathbb{R}^{q-1}:u_i>0,\text{ }i=1,2\ldots,q-1\}$. We put $$\Lambda^{\vec{v}}(\textbf{b})=\{\vec{w}=(m_1,\ldots,m_q)\in\mathbb{Z}^q:\beta_i m_1-b_i\leq m_i\leq \beta_i m_1+b_i,\text{ } i=2,3,\ldots,q\}$$ 
and let $\Lambda_k^{\vec{v}}(\textbf{b})=\Lambda^{\vec{v}}(\textbf{b})\cap ([0,k-1]\times\mathbb{Z}^{q-1})$ for each $k\in\mathbb{N}$.
Define the $\vec{v}$-directional Kronecker algebra by
$$\mathcal{K}_\mu^{\vec{v}}(\textbf{b})=\left\{B\in\mathcal{B}_X: \overline{\{U_T^{\vec{w}}1_B :\vec{w}\in \Lambda^{\vec{v}}(\textbf{b}) \}}\text{ is compact in } L^2(X,\mathcal{B}_X,\mu) \right\},$$
where $U_T^{\vec{w}}:L^2(X,\mathcal{B}_X,\mu)\to L^2(X,\mathcal{B}_X,\mu)$ is the unitary operator such that
$U_T^{\vec{w}}f=f\circ T^{\vec{w}}\text{ for all }f\in L^2(X,\mathcal{B}_X,\mu).$
Similarly, we can prove that $\mathcal{K}_\mu^{\vec{v}}(\textbf{b})$ is a $\sigma$-algebra and the definition of $\mathcal{K}_\mu^{\vec{v}}(\textbf{b})$ is independent of the selection of $\textbf{b}$. So we omit $\textbf{b}$ in $\mathcal{K}_\mu^{\vec{v}}(\textbf{b})$ and write it as $\mathcal{K}_\mu^{\vec{v}}$. We say $\mu$ has $\vec{v}$-discrete spectrum system if $\mathcal{K}_\mu^{\vec{v}}=\mathcal{B}_X$.
	
Similar to the case of $q=2$, we define three kinds of metrics on $X$ along $\vec{v}$ as follows. For each $k\in \mathbb{N}$ and $x,y\in X$, let
$$d_k^{\vec{v},\textbf{b}}(x,y)=\max_{\vec{w}\in \Lambda_k^{\vec{v}}(\textbf{b})}\{d(T^{\vec{w}}x,T^{\vec{w}}y)\},$$

$$\hat{d}_k^{\vec{v},\textbf{b}}(x,y)=\max_{1\leq i\leq k}\left\{\frac{1}{\#(\Lambda_i^{\vec{v}}(\textbf{b}))}\sum_{\vec{w}\in\Lambda_i^{\vec{v}}(\textbf{b})}d(T^{\vec{w}}x,T^{\vec{w}}y)\right\}$$
and
$$\bar{d}_k^{\vec{v},\textbf{b}}(x,y)=\frac{1}{\#(\Lambda_k^{\vec{v}}(\textbf{b}))}\sum_{\vec{w}\in\Lambda_k^{\vec{v}}(\textbf{b})}d(T^{\vec{w}}x,T^{\vec{w}}y).$$

   Let $span_K^{\vec{v},\textbf{b}}(k,\epsilon)$ denote the smallest cardinality of any $(k,\epsilon)$-spanning set for $K$ with respect to $T$, that is  $$span^{\vec{v},\textbf{b}}_K(k,\epsilon)=\min\left\{\#(F):F\subset X\subset \bigcup_{x\in F}B_{d_k^{\vec{v},\textbf{b}}}(x,\epsilon) \right\}.$$
We say that a subset $K$ of $X$ has bounded topological complexity with respect to $\{d_k^{\vec{v},\textbf{b}}\}$ if for any $\epsilon>0$, there exists a positive integer $C=C(\epsilon)$ such that $span_K^{\vec{v},\textbf{b}}(k,\epsilon)\leq C$ for all $k\in \mathbb{N}$. And we say that a subset $K$ of $X$ is $(\vec{v},\textbf{b})$-equicontinuous if for any $\epsilon>0$, there is $\delta>0$ such that whenever $x,y\in K$ with $d(x,y)<\delta$, $d(T^{\vec{w}}x,T^{\vec{w}}y)<\epsilon$ for all $\vec{w}\in \Lambda^{\vec{v}}(\textbf{b})$.
Then we have the following consequence.
\begin{thm}
Let $(X,T)$ be a $\mathbb{Z}^q$-t.d.s., $K$ be a compact subset of $X$ and $\vec{v}=(1,\beta_2,\ldots,\beta_q)\in\mathbb{R}^q$ be a direction vector. Then for any  $\textbf{b}\in \mathbb{R}^{q-1}_+$, the following statements are equivalent.
 \begin{itemize}
	\item[(a)]$K$ has bounded topological complexity with respect to $\{d_k^{\vec{v},\textbf{b}}\}$ .
	\item[(b)]$K$ is $(\vec{v},\textbf{b})$-equicontinuous.
\end{itemize}
\end{thm}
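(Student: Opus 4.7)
The plan is to imitate almost verbatim the proof of Theorem \ref{thm1} given earlier in the paper for the $\mathbb{Z}^2$ case, since all the ingredients used there (monotonicity of the family of metrics in $k$, continuity of each $d_k^{\vec{v},\textbf{b}}$, compactness of the hyperspace $K(X)$ in the Hausdorff metric, and the triangle inequality) survive intact when we replace $(m,n)\in\mathbb{Z}^2$ by $\vec{w}\in\mathbb{Z}^q$ and $\Lambda_k^{\vec{v}}(b)$ by $\Lambda_k^{\vec{v}}(\textbf{b})$. So the structure will be the same two-way implication proof.

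For the direction (b) $\Rightarrow$ (a), I would fix $\epsilon>0$, apply the definition of $(\vec{v},\textbf{b})$-equicontinuity to obtain $\delta>0$ making $d(T^{\vec{w}}x,T^{\vec{w}}y)<\epsilon$ uniformly over $\vec{w}\in\Lambda^{\vec{v}}(\textbf{b})$ whenever $d(x,y)<\delta$, and then cover the compact set $K$ by finitely many $d$-balls of radius $\delta$. By the choice of $\delta$, these finitely many balls also cover $K$ in the $d_k^{\vec{v},\textbf{b}}$-metric for every $k\in\mathbb{N}$, which gives a uniform bound on $span_K^{\vec{v},\textbf{b}}(k,\epsilon)$. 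This direction is completely routine.

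For the direction (a) $\Rightarrow$ (b), I would argue by contradiction: assume there exist $\epsilon>0$, sequences $x_l,y_l\in K$ with $d(x_l,y_l)<1/l$, and $\vec{w}_l\in\Lambda^{\vec{v}}(\textbf{b})$ such that $d(T^{\vec{w}_l}x_l,T^{\vec{w}_l}y_l)\geq\epsilon$. Passing to a subsequence so $x_l\to x_0\in K$ (hence also $y_l\to x_0$), the triangle inequality forces, say, $d(T^{\vec{w}_l}x_l,T^{\vec{w}_l}x_0)\geq\epsilon/2$ for all large $l$. Using bounded complexity at scale $\epsilon/6$, pick spanning sets $F_k\subset K$ with $\#(F_k)\leq C$, view $\{F_k\}$ as a sequence in the compact metric space $(K(X),d_H)$, and extract a Hausdorff-convergent subsequence $F_{k_l}\to F$ with $F\subset K$ and $\#(F)\leq C$. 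The key step is to exploit the monotonicity $d_k^{\vec{v},\textbf{b}}\leq d_{k+1}^{\vec{v},\textbf{b}}$, which is immediate from the inclusion $\Lambda_k^{\vec{v}}(\textbf{b})\subset\Lambda_{k+1}^{\vec{v}}(\textbf{b})$, to pass to the limit and deduce $K=\bigcup_{z\in F}K_z$ where $K_z=\bigcap_{k=1}^{\infty}\{x\in K: d_k^{\vec{v},\textbf{b}}(x,z)\leq\epsilon/6\}$. Continuity of each $d_k^{\vec{v},\textbf{b}}$ makes each $K_z$ closed, so by Baire-type pigeonholing (passing to a further subsequence of $\{x_l\}$ lying in a single $K_z$), I get $d_k^{\vec{v},\textbf{b}}(x_l,x_0)\leq \epsilon/3$ for all $k,l$, directly contradicting $d(T^{\vec{w}_l}x_l,T^{\vec{w}_l}x_0)\geq\epsilon/2$.

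The main obstacle, as in the $\mathbb{Z}^2$ proof, is verifying the monotonicity $d_k^{\vec{v},\textbf{b}}(x,y)\leq d_{k+1}^{\vec{v},\textbf{b}}(x,y)$, but this reduces to the set-theoretic inclusion $\Lambda_k^{\vec{v}}(\textbf{b})\subset\Lambda_{k+1}^{\vec{v}}(\textbf{b})$ which follows from $[0,k-1]\subset[0,k]$. Everything else is formally identical to the two-dimensional argument; the only notational adjustment is replacing pairs $(m,n)\in\Lambda^{\vec{v}}(b)$ by vectors $\vec{w}\in\Lambda^{\vec{v}}(\textbf{b})$, and since no place in the $\mathbb{Z}^2$ proof ever used the specific shape of $\Lambda^{\vec{v}}(b)$ (only that it is a subset of $\mathbb{Z}^q$ cut by a half-space in the first coordinate), the generalization is mechanical. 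I would therefore state that the proof is verbatim the same and omit the repetition, or include it explicitly for completeness with $(m,n)$ replaced by $\vec{w}$ throughout.
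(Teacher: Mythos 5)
Your proposal is correct and is exactly the paper's approach: the appendix states that these $\mathbb{Z}^q$ results "are proved by exactly the same methods for the case of $\mathbb{Z}^2$-t.d.s.", and you correctly identify that the only properties of $\Lambda_k^{\vec{v}}(\textbf{b})$ used in the proof of Theorem \ref{thm1} are finiteness, monotonicity in $k$, and the resulting continuity and monotonicity of the metrics $d_k^{\vec{v},\textbf{b}}$, all of which persist for general $q$.
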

Define
$$\widehat{span}_K^{\vec{v},\textbf{b}}(k,\epsilon)=\min\left\{\#(F):F\subset X\subset \bigcup_{x\in F}B_{\hat{d}_k^{\vec{v},\textbf{b}}}(x,\epsilon) \right\}.$$
We say that a subset $K$ of $X$ has bounded topological complexity with respect to $\{\hat{d}_k^{\vec{v},\textbf{b}}\}$ if for any $\epsilon>0$, there exists a positive integer $C=C(\epsilon)$ such that $\widehat{span}_K^{\vec{v},\textbf{b}}(k,\epsilon)\leq C$ for all $k\in \mathbb{N}$. Since $\hat{d}_k^{\vec{v},\textbf{b}}(x,y)\leq d_k^{\vec{v},\textbf{b}}(x,y)$ for all $k\in \mathbb{N}$ and $x,y\in X$, if $K$ has bounded topological complexity with respect to $\{d_k^{\vec{v},\textbf{b}}\}$, then it also has bounded topological complexity with respect to $\{\hat{d}_k^{\vec{v},\textbf{b}}\}$. We say that a subset $K$ of $X$ is $(\vec{v},\textbf{b})$-equicontinuous in the mean if for any $\epsilon>0$, there is $\delta>0$ such that $\hat{d}_k^{\vec{v},\textbf{b}}(x,y)<\epsilon$ for all $k\in \mathbb{N}$ and $x,y\in X$ with $d(x,y)<\delta$. Similarly, we have the following consequence.

\begin{thm}
Let $(X,T)$ be a $\mathbb{Z}^q$-t.d.s., $K$ be a compact subset of $X$ and $\vec{v}=(1,\beta_2,\ldots,\beta_q)\in\mathbb{R}^q$ be a direction vector. Then for any  $\textbf{b}\in \mathbb{R}^{q-1}_+$, the following statements are equivalent.
	\begin{itemize}
		\item[(a)]$K$ has bounded topological complexity with respect to $\{\hat{d}_k^{\vec{v},\textbf{b}}\}$.
		\item[(b)]$K$ is $(\vec{v},\textbf{b})$-equicontinuous in the mean.
	\end{itemize}
\end{thm}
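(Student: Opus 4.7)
The plan is to mirror the proof of Theorem \ref{thm1} (equivalently, the proof of Theorem \ref{thm2} in the $\mathbb{Z}^2$ setting) almost verbatim, replacing $d_k^{\vec{v},b}$ by $\hat{d}_k^{\vec{v},\textbf{b}}$ and $\Lambda_k^{\vec{v}}(b)$ by $\Lambda_k^{\vec{v}}(\textbf{b})$. The only structural facts used in that argument are that each $\hat{d}_k^{\vec{v},\textbf{b}}$ is a continuous metric topologically equivalent to $d$ and that the sequence $\{\hat{d}_k^{\vec{v},\textbf{b}}\}_{k\ge 1}$ is pointwise monotone non-decreasing in $k$; the latter is immediate from the definition, since enlarging $k$ only enlarges the range $\{1,\dots,k\}$ over which the max is taken. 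No property specific to $q=2$ is invoked in the proof, so the argument transfers directly.

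For (b) $\Rightarrow$ (a): Fix $\epsilon>0$ and choose $\delta>0$ from the definition of $(\vec{v},\textbf{b})$-equicontinuity in the mean, so that $d(x,y)<\delta$ with $x,y\in K$ forces $\hat{d}_k^{\vec{v},\textbf{b}}(x,y)<\epsilon$ for every $k$. Cover the compact set $K$ by finitely many $d$-balls of radius $\delta$; their centers form a $(k,\epsilon)$-spanning set in $\hat{d}_k^{\vec{v},\textbf{b}}$ whose cardinality is independent of $k$. For (a) $\Rightarrow$ (b), argue by contradiction: if $K$ fails to be $(\vec{v},\textbf{b})$-equicontinuous in the mean, there exist $\epsilon>0$, sequences $x_l,y_l\in K$ with $d(x_l,y_l)<1/l$, and indices $k_l\in\mathbb{N}$ with $\hat{d}_{k_l}^{\vec{v},\textbf{b}}(x_l,y_l)\ge\epsilon$. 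Extract a subsequence with $x_l,y_l\to x_0\in K$, and use the triangle inequality to assume $\hat{d}_{k_l}^{\vec{v},\textbf{b}}(x_l,x_0)\ge\epsilon/2$ for all $l$. Apply bounded complexity at scale $\epsilon/6$ to get spanning sets $F_k\subset K$ with $\#(F_k)\le C$; by compactness of the hyperspace $K(X)$ pass to a Hausdorff limit $F$ with $\#(F)\le C$, and using monotonicity of $\{\hat{d}_k^{\vec{v},\textbf{b}}\}$ in $k$ conclude that every $x\in K$ satisfies $\hat{d}_k^{\vec{v},\textbf{b}}(x,z)\le\epsilon/6$ for some $z\in F$ and all $k$. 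Enumerate $F=\{z_1,\dots,z_r\}$ and set
\[
K_t=\bigcap_{k=1}^{\infty}\{x\in K:\hat{d}_k^{\vec{v},\textbf{b}}(x,z_t)\le\epsilon/6\},
\]
each closed (continuity of $\hat{d}_k^{\vec{v},\textbf{b}}$) and covering $K$. Passing to a further subsequence, $\{x_l\}$ lies in some $K_t$, hence so does its limit $x_0$; then $\hat{d}_k^{\vec{v},\textbf{b}}(x_l,x_0)\le\epsilon/3$ for all $k,l$, contradicting the lower bound $\epsilon/2$.

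The only step that is not completely routine is verifying that the monotonicity-based bookkeeping in the last paragraph of the proof of Theorem \ref{thm1} (where one passes from $d_{k_l}^{\vec{v},b}(x,z_{k_t})\le d_{k_t}^{\vec{v},b}(x,z_{k_t})$ for $t\ge l$ to $d_{k_l}^{\vec{v},b}(x,z)\le\epsilon/6$) still goes through; but this uses nothing beyond $\hat{d}_{k}^{\vec{v},\textbf{b}}\le\hat{d}_{k+1}^{\vec{v},\textbf{b}}$ and continuity in $(x,y)$, both of which are available. Consequently no new ideas are needed, and the $\mathbb{Z}^q$ statement follows by the same proof template as the $\mathbb{Z}^2$ version.
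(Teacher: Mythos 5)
Your proposal is correct and matches the paper's approach exactly: the paper itself proves this by declaring that the argument of Theorem \ref{thm1} carries over verbatim once $d_k^{\vec{v},\textbf{b}}$ is replaced by $\hat{d}_k^{\vec{v},\textbf{b}}$, the only properties needed being that each $\hat{d}_k^{\vec{v},\textbf{b}}$ is a continuous metric equivalent to $d$ and that the sequence is non-decreasing in $k$. Your write-up in fact supplies more of the contradiction argument (the choice of the scales $\epsilon/6$, $\epsilon/3$, $\epsilon/2$ and the Hausdorff-limit bookkeeping) than the paper does, and all of it goes through as you claim.
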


Let
$$span_{\mu}^{\vec{v},\textbf{b}}(k,\epsilon)=\min\left\{\#(F):F\subset X\text{ and } \mu\left(\bigcup_{x\in F}B_{d_k^{\vec{v},\textbf{b}}}(x,\epsilon)\right)>1-\epsilon\right\}.$$
We say that $\mu$ has bounded complexity with respect to $\{d_k^{\vec{v},\textbf{b}}\}$ if for any $\epsilon>0$, there exists a positive integer $C=C(\epsilon)$ such that $span_{\mu}^{\vec{v},\textbf{b}}(k,\epsilon)\leq C$ for all $k\in \mathbb{N}$. We say that $T$ is $(\mu,\vec{v},\textbf{b})$-equicontinuous if for any $\tau>0$, there exists a  measurable $(\vec{v},\textbf{b})$-equicontinuous subset $K$ of $X$ with $\mu(K)>1-\tau$.
\begin{thm}
	Let $(X,T)$ be a $\mathbb{Z}^q$-t.d.s., $\mu\in M(X,T)$ and   $\vec{v}=(1,\beta_2,\ldots,\beta_q)\in\mathbb{R}^q$ be a direction vector. Then for any  $\textbf{b}\in \mathbb{R}^{q-1}_+$, the following two statements are equivalent.
	\begin{itemize}
	\item[(a)]$\mu$ has bounded complexity with respect to $\{d_k^{\vec{v},\textbf{b}}\}$.
    \item[(b)]$T$ is $(\mu,\vec{v},\textbf{b})$-equicontinuous.
	\end{itemize}

\end{thm}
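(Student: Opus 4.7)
The plan is to mimic the proof of Theorem \ref{thm3} line by line, with $\mathbb{Z}^2$ replaced by $\mathbb{Z}^q$, the scalar $b$ replaced by the vector $\textbf{b}\in\mathbb{R}^{q-1}_+$, and $\Lambda_k^{\vec{v}}(b)$ replaced by $\Lambda_k^{\vec{v}}(\textbf{b})$. The only structural properties of the setup that intervene are (i) that $\{d_k^{\vec{v},\textbf{b}}\}$ is an increasing sequence of metrics on $X$, (ii) that $\mu$ is a regular Borel probability measure on the compact metric space $X$, and (iii) the $\mathbb{Z}^q$-version of the topological equivalence ``bounded complexity with respect to $\{d_k^{\vec{v},\textbf{b}}\}$ on a compact set $\Leftrightarrow$ $(\vec{v},\textbf{b})$-equicontinuity'', which is precisely the theorem stated immediately before this one in the appendix. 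All three items are available.

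For the direction (b)$\Rightarrow$(a), I would fix $\epsilon>0$ and use $(\mu,\vec{v},\textbf{b})$-equicontinuity to produce a measurable $(\vec{v},\textbf{b})$-equicontinuous set $K$ with $\mu(K)>1-\epsilon$; by inner regularity we may take $K$ compact. Then the inequality $span_\mu^{\vec{v},\textbf{b}}(k,\epsilon)\le span_K^{\vec{v},\textbf{b}}(k,\epsilon)$ combined with the preceding theorem (bounded topological complexity of the compact $(\vec{v},\textbf{b})$-equicontinuous set $K$) supplies a uniform bound in $k$.

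For the direction (a)$\Rightarrow$(b), I would fix $\tau>0$ and, for each $M\in\mathbb{N}$, use bounded measure complexity to obtain $C_M>0$ and, for every $k\in\mathbb{N}$, a set $F_k\subset X$ with $\#(F_k)\le C_M$ and $\mu\bigl(\bigcup_{x\in F_k}B_{d_k^{\vec{v},\textbf{b}}}(x,1/M)\bigr)>1-\tau/2^{M+2}$. Then choose a compact $K_k$ inside this union with the same measure bound (up to a negligible loss), pass to a Hausdorff-convergent subsequence to get limits $F_M$ and $K_M$, and use the monotonicity of $\{d_k^{\vec{v},\textbf{b}}\}$ exactly as in the proof of Theorem \ref{thm3} to conclude that $K_M\subset\bigcup_{x\in F_M}B_{d_k^{\vec{v},\textbf{b}}}(x,3/M)$ for every $k$. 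Setting $K=\bigcap_{M\ge 1}K_M$ gives $\mu(K)>1-\tau$ and $span_K^{\vec{v},\textbf{b}}(k,3/M)\le C_M$ for all $k,M$; one final application of the topological theorem (Theorem A.1) yields that $K$ is $(\vec{v},\textbf{b})$-equicontinuous, and since $\tau$ is arbitrary, $T$ is $(\mu,\vec{v},\textbf{b})$-equicontinuous.

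There is essentially no new obstacle beyond what has already been handled for $q=2$; the one place that required care was the monotonicity argument used to pass from the uniform spanning bound for the finite approximants $K_k$ to a spanning bound for the Hausdorff limit $K_M$, and this goes through verbatim because $\Lambda_k^{\vec{v}}(\textbf{b})\subset\Lambda_{k+1}^{\vec{v}}(\textbf{b})$ in arbitrary dimension, so $d_k^{\vec{v},\textbf{b}}\le d_{k+1}^{\vec{v},\textbf{b}}$. Thus the argument is genuinely a formal transcription, and the only work is keeping the notation consistent.
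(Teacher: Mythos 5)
Your proposal is correct and coincides with the paper's intended argument: the appendix states precisely that these $\mathbb{Z}^q$ results are obtained by transcribing the $\mathbb{Z}^2$ proofs, and your transcription of the proof of Theorem \ref{thm3} (both directions, including the Hausdorff-limit and monotonicity steps and the final appeal to the topological equivalence for compact sets) is the right one. The structural inputs you isolate --- monotonicity of $\{d_k^{\vec{v},\textbf{b}}\}$ from $\Lambda_k^{\vec{v}}(\textbf{b})\subset\Lambda_{k+1}^{\vec{v}}(\textbf{b})$, inner regularity of $\mu$, and the $\mathbb{Z}^q$ topological theorem --- are exactly what the $q=2$ proof uses, so nothing further is needed.
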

Let
$$\widehat{span}_{\mu}^{\vec{v},\textbf{b}}(k,\epsilon)=\min\left\{\#(F):F\subset X\text{ and } \mu\left(\bigcup_{x\in F}B_{\hat{d}_k^{\vec{v},\textbf{b}}}(x,\epsilon)\right)>1-\epsilon\right\}$$
and
$$\overline{span}_{\mu}^{\vec{v},\textbf{b}}(k,\epsilon)=\min\left\{\#(F):F\subset X\text{ and } \mu\left(\bigcup_{x\in F}B_{\bar{d}_k^{\vec{v},\textbf{b}}}(x,\epsilon)\right)>1-\epsilon\right\}.$$

We say that $\mu$ has bounded complexity with respect to $\{\hat{d}_k^{\vec{v},\textbf{b}}\}$ if for any $\epsilon>0$, there exists a positive integer $C=C(\epsilon)$ such that $\widehat{span}_{\mu}^{\vec{v},\textbf{b}}(k,\epsilon)\leq C$ for all $k\in \mathbb{N}$. We say that $T$ is $(\mu,\vec{v},\textbf{b})$-equicontinuous in the mean if for any $\tau>0$, there exists a measurable subset $K$ of $X$ with $\mu(K)>1-\tau$ that is $(\vec{v},\textbf{b})$-equicontinuous in the mean and $(\mu,\vec{v},\textbf{b})$-mean equicontinuous, if for any $\tau>0$, there exists a measurable subset $K$ of $X$ with $\mu(K)>1-\tau$ that is $(\vec{v},\textbf{b})$-mean equicontinuous.

We say that $\mu$ has bounded complexity with respect to $\{\bar{d}_k^{\vec{v},\textbf{b}}\}$ if for any $\epsilon>0$, there exists a positive integer $C=C(\epsilon)$ such that $\overline{span}_{\mu}^{\vec{v},\textbf{b}}(k,\epsilon)\leq C$ for all $k\in \mathbb{N}$.  Using exactly the same methods as proving the case of $\mathbb{Z}^2$-t.d.s., we have the corresponding consequence as follows.
\begin{thm}
Let $(X,T)$ be a $\mathbb{Z}^q$-t.d.s., $\mu\in M(X,T)$ and  $\vec{v}=(1,\beta_2,\ldots,\beta_q)\in\mathbb{R}^q$ be a direction vector. Then the following statements are equivalent.
\begin{itemize}
\item[(a)]$\mu$ has bounded complexity with respect to $\{\hat{d}_k^{\vec{v},\textbf{b}}\}$ for some $\textbf{b}\in \mathbb{R}^{q-1}_+$ $($or for any $\textbf{b}\in \mathbb{R}^{q-1}_+)$.
\item[(c)]$\mu$ has bounded complexity with respect to $\{\bar{d}_k^{\vec{v},\textbf{b}}\}$ for some $\textbf{b}\in \mathbb{R}^{q-1}_+$ $($or for any $\textbf{b}\in \mathbb{R}^{q-1}_+)$.
\item[(e)]$\mu$ has $\vec{v}$-discrete spectrum.
\item[(f)]$T$ is $(\mu,\vec{v},\textbf{b})$-equicontinuous in the mean for some $\textbf{b}\in \mathbb{R}^{q-1}_+$ $($or for any $\textbf{b}\in \mathbb{R}^{q-1}_+)$.
\item[(h)]$T$ is $(\mu,\vec{v},\textbf{b})$-mean equicontinuous for some $\textbf{b}\in \mathbb{R}^{q-1}_+$ $($or for any $\textbf{b}\in \mathbb{R}^{q-1}_+)$.
\end{itemize}

\end{thm}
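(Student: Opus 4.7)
The plan is to imitate the $\mathbb{Z}^2$ arguments (Theorems \ref{thm4}, \ref{thm7}, \ref{thm8} and Corollaries \ref{cor1}, \ref{cor2}) in the $\mathbb{Z}^q$ setting, routing every measure-theoretic equivalence through a single auxiliary $\mathbb{Z}$-m.p.s. Specifically, I would first construct a $\mathbb{Z}$-m.p.s.\ $(\widetilde{X},\widetilde{\mathcal{B}},\widetilde{\mu},W)$ with $\widetilde{X}=X\times[0,1)^{q-1}$, $\widetilde{\mu}=\mu\times m$ (for $m$ the Lebesgue measure on $[0,1)^{q-1}$), and transformation
\[
W(x,t_2,\ldots,t_q)=\bigl(T^{(1,[\beta_2+t_2],\ldots,[\beta_q+t_q])}x,\{\beta_2+t_2\},\ldots,\{\beta_q+t_q\}\bigr).
\]
This is the natural $q$-dimensional analogue of Park's construction used in Theorem \ref{thm5}, and I would verify the direct extension of Lemma \ref{lem2}: $\widetilde{\mu}$ has discrete spectrum if and only if $\mu$ has $\vec{v}$-discrete spectrum. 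The metric on $\widetilde{X}$ would be chosen as the maximum of $d$ and the $q-1$ circle metrics on the $t_i$-coordinates, so that Bowen/mean/max-mean balls in $\widetilde{X}$ relate coordinatewise to their directional counterparts on $X$.

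Next I would establish the chain $(c)\Leftrightarrow(e)$ by mirroring Theorems \ref{thm5}--\ref{thm6}. The forward direction (discrete spectrum $\Rightarrow$ bounded $\bar d_k^{\vec v,\mathbf{b}}$-complexity) transfers bounded $\bar{\tilde d}_k$-complexity for $\widetilde{\mu}$ (given by Proposition \ref{prop2}) to bounded $\bar d_k^{\vec v,\mathbf{b}}$-complexity for $\mu$: the key computation is the $q$-dimensional version of Claim 1, where for each fixed $\mathbf{t}\in[0,1)^{q-1}$ the sequence $\{(i,[i\beta_2+t_2],\ldots,[i\beta_q+t_q])\}_{i\ge 0}$ lies in $\Lambda^{\vec v}(\mathbf{1})$ and samples a density-positive portion of $\Lambda_k^{\vec v}(\mathbf{b})$. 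I would first reduce to $\mathbf{b}=\mathbf{1}$ via a finite cover $\Lambda^{\vec v}(\mathbf{b})\subset\bigcup_{\vec w'\in C}(\vec w'+\Lambda^{\vec v}(\mathbf{1}))$ (this also gives the $\mathbf{b}$-independence, i.e.\ the analogue of Corollary \ref{cor1}), and then handle the bad-index set just as in Case \ref{C1}. The converse direction adapts Theorem \ref{thm6}: cover each of the $q-1$ irrational-rotation factors in $W$ independently by $\rho_k$-balls, combine with the cover of $X$ by $\bar d_k^{\vec v,\mathbf{b}}$-balls, use Birkhoff's theorem on each factor to control the frequency of indices where $[i\beta_j+t_j]\ne[i\beta_j+u_j]$, and conclude that $\widetilde{\mu}$ has bounded complexity with respect to $\{\bar{\tilde d}_k\}$, hence discrete spectrum, hence $\mu$ has $\vec v$-discrete spectrum.

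Having $(c)\Leftrightarrow(e)$, the equivalence $(a)\Leftrightarrow(c)$ follows by the argument of Theorem \ref{thm8}: from bounded $\bar d_k^{\vec v,\mathbf{b}}$-complexity for $\mu$ one deduces bounded $\bar{\tilde d}_k$-complexity for $\widetilde{\mu}$, then applies Lemma \ref{lem3} to $\widetilde{\mu}$ to get bounded $\hat{\tilde d}_k$-complexity, and finally transfers back through the identity $\hat d_k^{\vec v,\mathbf{b}}=\max_{1\le i\le k}\bar d_i^{\vec v,\mathbf{b}}$ (and likewise for $\hat{\tilde d}_k$) using the same Claim-1-type estimate; the reverse inequality $\hat d_k^{\vec v,\mathbf{b}}\ge \bar d_k^{\vec v,\mathbf{b}}$ is immediate. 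The equivalences $(a)\Leftrightarrow(f)\Leftrightarrow(h)$ are the $\mathbb{Z}^q$ analogue of Theorem \ref{thm4}, whose proof is essentially formal once $\Lambda_k^{\vec v}(\mathbf{b})$ replaces $\Lambda_k^{\vec v}(b)$: the $(b)\Rightarrow(a)$ step of Theorem \ref{thm3} generalises verbatim to give $(f)\Rightarrow(a)$, $(f)\Rightarrow(h)$ is tautological, and $(h)\Rightarrow(a)$ is the genuinely substantive step, where I would follow the inductive construction of pairwise-disjoint compact sets $K_j\subset A_{N_j}(x_j)$ exactly as in Theorem \ref{thm4}, noting that nothing in that argument uses $q=2$.

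The only substantive obstacle is the $q$-dimensional Claim 1: one must control, uniformly in $k$, the discrepancy between the average along the single orbit $\{(i,[i\beta_2+t_2],\ldots,[i\beta_q+t_q])\}_{i=0}^{k-1}$ and the full average over $\Lambda_k^{\vec v}(\mathbf{b})$. For $q=2$ this is handled by splitting the two rows $[i\beta+t]$ and $[i\beta+1-t]$; for general $q$ one needs the analogous observation that $\Lambda_k^{\vec v}(\mathbf{1})$ decomposes into $\prod_{j=2}^q\#\{[i\beta_j+t_j]:t_j\in[0,1)\}=O(1)$ such orbits as $\mathbf{t}$ ranges over a finite grid in $[0,1)^{q-1}$, together with the density estimate $\#(\Lambda_k^{\vec v}(\mathbf{b}))/\#(\Lambda_k^{\vec v}(\mathbf{1}))\to\prod_j b_j$ (valid when the $\beta_j$ that are irrational contribute equidistribution and the rational ones contribute a finite-index lattice correction). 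Once this combinatorial-equidistribution bookkeeping is in place, the remainder of the proof is a mechanical transcription of the $\mathbb{Z}^2$ arguments.
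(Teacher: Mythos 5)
Your proposal is correct and follows essentially the same route as the paper: the paper's appendix proves this theorem precisely by declaring that the $\mathbb{Z}^2$ arguments (the $q$-dimensional Park skew product $\widetilde{X}=X\times[0,1)^{q-1}$, the directional Kronecker lemma, Proposition \ref{prop2} and Lemma \ref{lem3} applied to $\widetilde{\mu}$, and the Claim-1 sampling estimate) transfer verbatim, which is exactly the transcription you carry out. The one point you rightly isolate as needing care --- decomposing $\Lambda_k^{\vec v}(\mathbf{1})$ into finitely many orbits $\{(i,[i\beta_2+t_2],\ldots,[i\beta_q+t_q])\}$ and the density ratio $\#(\Lambda_k^{\vec v}(\mathbf{b}))/\#(\Lambda_k^{\vec v}(\mathbf{1}))$ --- is the correct $q$-dimensional analogue of the paper's two-row splitting in \eqref{15} and \eqref{3}.
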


\end{appendix}

\end{document}